\newcommand*{\smallmajorarcs}{
  \begin{propositionE} \label{prop:smallmajorarcs}
    Let $f$ be a completely multiplicative function with $|f| \leq 1$.
    Let $W$ be a smooth function compactly supported in $(1/4, 4)$ with $|W| \leq 1$. Let $\eta \in (0, \tfrac{1}{100})$ and set $\delta = \exp( - \eta^{-4} )$.
    Let $1 \leq Q \leq N^{\delta}$ and $\mathfrak{M}_{Q, N}$ be defined as in \eqref{eq:majordef}.
    Then, for all $N > N_{0}(\eta)$,
    $$
    \int_{\mathfrak{M}_{Q, N}} \Big | \sum_{n} f(n)e(n \alpha) W \Bigl ( \frac{n}{N} \Bigr )\Big |^{2} d \alpha
    \ll \| W \|_{2,2}^2 \cdot \Big ( \frac{N}{\log(1/\eta)} + \frac{N}{\delta^{4}} \exp ( - 2 M_{f, \eta^{-2}}(Q; N) ) \Big ),
    $$
    with an absolute constant in $\ll$.
  \end{propositionE}
}
\newcommand*{\fAdefinition}{
  $$
  f_{\geq A}(p) = \begin{cases}
    1 & \text{ if } p \leq A \\
    f(p) & \text{ if } p > A
  \end{cases}.
  $$
}
\newcommand*{\majorarcsdef}{
  \begin{equation} \label{eq:majordef}
    \mathfrak{M}_{Q, N} := \bigcup_{\substack{q \leq Q \\ (a,q) = 1}} \Bigl ( \frac{a}{q} - \frac{Q}{q N}, \frac{a}{q} + \frac{Q}{q N} \Bigr ),
  \end{equation}
}
\newcommand*{\pretentiousDistanceDef}{
  $$
  M_{f, A}(Q; N) := \inf_{\substack{q \leq Q \\ \psi \text{ primitive} \\ \psi \pmod{q} \\ |t| \leq A Q / q }} \Bigl ( \sum_{Q / q \leq p \leq N \\ (p, q) = 1} \frac{1 - \Re f(p) \overline{\psi(p) p^{it}}}{p} \Bigr ).
  $$
}
\newcommand*{\reduction}{
  \begin{lemmaD} \label{le:reduction}
    Let $f$ be a multiplicative function with $|f| \leq 1$.
    Let $W$ be a smooth function compactly supported in a closed interval $I \subset (0, \infty)$. Given $M \geq 1$ let $W_{M}$ be a smooth compactly supported function such that $W_{M}(x) = 1$ for all
    $$
    x \in \bigcup_{d \leq M} d I
    $$
    where for an interval $I = [a,b]$ we write $d I := [d a, d b]$.
    For any $A$, let
    $$
    K := A^{100 \log\log A}.
    $$
    Then, for $N \geq \exp(A)$,
    \begin{align*}
      \int_{\mathfrak{M}_{Q, N}} & \Big | \sum_{n} f(n) e(n \alpha) W \Bigl ( \frac{n}{N} \Bigr ) \Big |^{2} \\ & \ll \| W \|^2_{2, 2} \cdot \Big ( K \sup_{|t| \leq A} \int_{\mathfrak{M}_{K Q, N}} \Big | \sum_{n} f_{\geq A}(n) n^{-it} e(n \alpha) W_{K} \Bigl ( \frac{n}{N} \Bigr ) \Big |^{2} d \alpha + \frac{N}{\log A} \Big ) ,
    \end{align*}
    where the implicit constant in $\ll$ is absolute.
  \end{lemmaD}
}
\newcommand*{\maincriterion}{
  \begin{propositionA} \label{prop:maincriterion}
    Take a sequence $a : \mathbb{N} \rightarrow \mathbb{C}$ with a decomposition $a = a_{1} + a_{2} + a_3$.
    Let $\mathfrak{m}$ and $\mathfrak{M}$ be two measurable sets such that $[0,1] = \mathfrak{m} \cup \mathfrak{M}$.

    Write
    \begin{equation*}
      S(\alpha) = \sum_{n \geq 1} a(n)e(n\alpha),
    \end{equation*}
    and for $i\leq 3$, write
    \begin{equation*}
      S_i(\alpha) = \sum_{n\geq 1} a_i(n)e(n\alpha).
    \end{equation*}
    
    Suppose that for some $\delta_1, \delta_2, \delta_3, \Delta > 0$ with $\delta_1 + \delta_2 + \delta_3 < 1$, the $S_i$ satisfy
    the following properties.
    \begin{enumerate}[label=(A.\arabic*)]
    \item
      $\norm{(S_1 + S_2)\mathbf{1}_{\mathfrak{M}}}_2\le\delta_1\norm{S}_{2}$; \label{i:majarc_ass}
    \item
      $\norm{S_2}_2\le\delta_2\norm{S}_2$; \label{i:s2_ass}
    \item
      $\norm{S_3}_2\le\delta_3\norm{S}_2$; \label{i:s3_ass}
    \item
      $|S_1(\alpha)|\le \Delta^{-1}N^{1/2}\norm{S}_2$ for all $\alpha\in \mathfrak{m} $. \label{i:minarc_ass}
    \end{enumerate}
    Then, we have that
    \begin{equation*}
      \norm{S}_1 \ge\frac{1}{12} \Big (1 - (\delta_1 + \delta_2 + \delta_3) \Big )^{3} \Delta N^{-1/2}\norm{S}_2.
    \end{equation*}
  \end{propositionA}
}
\newcommand*{\turankubiliuslemma}{
  \begin{lemmaB} \label{le:turankubiliuslemma}
    Let $a: \mathbb{N} \rightarrow \mathbb{C}$ be any sequence with $|a(n)| \leq 1$. Let $I \subset [1, N]$. There exists an absolute constant $N_{0}$ such that for all $N > N_{0}$,
    $$
    \int_{0}^{1} \Big | \sum_{n \geq 1} a(n) c_{2}(n; I) \Big |^{2} d \alpha \leq 4 N\Bigl ( \sum_{p \in I} \frac{1}{p} \Bigr )^{-1}
    $$
  \end{lemmaB}
}
\newcommand*{\minorarcbound}{
  \begin{lemmaC} \label{le:minorarcbound}
    Let $f: \mathbb{N} \rightarrow \mathbb{C}$ be a multiplicative function with $|f| \leq 1$. Let $W$ a smooth
    function compactly supported in $(1/4, 4)$. Let $1 \leq Q \leq N^{1/100}$. Set $I = [Q, N^{1/100}]$. Then, for $\alpha \in \mathfrak{m}_{Q, N}$,
    $$
    \sum_{n \geq 1} f(n)c_{1}(n; I) e(n \alpha) W \Bigl ( \frac{n}{N} \Bigr ) \ll   \| W \|_{1, 2} \cdot \frac{N}{\sqrt{Q}}
    $$
    with an absolute implicit constant in $\ll$.
  \end{lemmaC}
}
\newcommand*{\cpsidef}{
  $$
  c_{\psi}(n) := \sum_{\substack{x \pmod{q} \\ (x, q) = 1}} \psi(n) e \Bigl ( \frac{n x}{q} \Big).
  $$
}
\newcommand*{\halaszvariant}{
  \begin{propositionG} \label{prop:halaszvariant}
    Let $\psi$ be a primitive character of conductor $q$. Let $W$ be a smooth function compactly supported in $(0, \infty)$ with $|W| \leq 1$.
    Let $f$ be a completely multiplicative function with $|f| \leq 1$. Let $\varepsilon_{0} \in (0, 10^{-6})$ be given. Then, for $R \leq N^{\varepsilon_{0}}$,
    $T := 1 / \varepsilon_{0}$, and all $N$ sufficiently large with respect to $1/\varepsilon_{0}$,
    \begin{align} \label{eq:main}
      \sum_{\substack{\chi \pmod{r} \\ \psi \text{ induces  } \chi \\ r \leq R}} \frac{1}{\varphi(r)} \Big | \frac{1}{N} \sum_{n} f(n) c_{\chi}(n) W \Bigl ( \frac{n}{N} \Bigr ) \Big |^{2}
      \ll \frac{N^{2}}{\varepsilon_{0}^{3/2}} \exp( - 2 M_{f, \psi, T}(R; N)) + \frac{N^{2}}{\log(1/\varepsilon_0)}
    \end{align}
    where the implicit constant in $\ll$ is allowed to depend on the test function $W$ and where we set
    $$
    M_{f, \psi, T}(R; N) := \inf_{|t| \leq T} \sum_{R / q \leq p \leq N} \frac{1 - \Re f(p) \overline{\psi(p)}p^{it}}{p}
    $$
  \end{propositionG}
}
\newcommand*{\specialtheorem}{
  \begin{specialTheorem} \label{thm:specialmain}
    Let $f : \mathbb{N} \rightarrow \mathbb{C}$ be a multiplicative function with $|f(n)| \leq 1$ for all integers $n \geq 1$.
    Let $c \in (0,1)$ and $N, \Delta \geq 1/c$ be such that,
    $$
    c N \leq \sum_{n \leq N} |f(n)|^{2}
    $$
    and
    $$
    \int_{0}^{1} \Big | \sum_{n \leq N} f(n) e(n \alpha) \Big | d \alpha \leq \Delta.
    $$
    Then, there exists a real number $t$ and a primitive Dirichlet character $\chi$ of conductor $q$,
    such that $(1 + |t|) q \ll c^{-3} \Delta^{2}$ and
    $$
    \Big | \sum_{\substack{\Delta^{2}  \leq p \leq N \\ (p,q) = 1}} \frac{1 - \Re \overline{f(p)} p^{it} \chi(p)}{p} \Big | \ll \frac{1}{c}
    $$
  \end{specialTheorem}
}
\newcommand*{\maintheorem}{
  \begin{mainTheorem} \label{thm:main}

    Let $c, \varepsilon > 0$. Let $g : \mathbb{N} \rightarrow \mathbb{C}$ be a sequence and $f: \mathbb{N} \rightarrow \mathbb{C}$ a multiplicative function such that $|f(n)| \leq 1$
    for all integers $n \geq 1$. Let $\mathcal{N} := \mathcal{N}_{c, \varepsilon, g, f}$ be the set of all integers $N \geq 1$, such that,
    $$
    c N \leq \sum_{\varepsilon N \leq n \leq (1 - \varepsilon) N} |g(n)|^{2} ,
    $$
    and
    $$
    \sum_{n \leq N} |g(n) - f(n)|^{2} \leq (1 - \varepsilon) \sum_{\varepsilon N \leq n \leq (1 - \varepsilon) N} |g(n)|^{2}.
    $$
    There exists an absolute constant $K > 0$ such that if
    $N \in \mathcal{N}_{c, \varepsilon, g, f}$ and $1 / c \leq \Delta \leq N^{\gamma}$ with
    $\gamma = \exp  ( - (1 / c) \exp(\exp(K \varepsilon^{-5})) )$
    and
    $$
    \int_{0}^{1} \Big | \sum_{n \leq N} g(n) e(n \alpha) \Big | d \alpha \leq \Delta
    $$
    then there exists a real number $t$ and a primitive Dirichlet character of modulus $q$ such that
    $(1 + |t|) q \ll c^{-3} \rho(\varepsilon) \Delta^{2}$ with $\rho(\varepsilon) = \exp(\exp(K \varepsilon^{-5}))$ and
    $$
    \Big | \sum_{\substack{\Delta^{2} / q \leq p \leq N \\ (p,q) = 1}} \frac{1 - \Re \overline{f(p)} p^{it} \chi(p)}{p} \Big | \ll \rho(\varepsilon)
    $$
  \end{mainTheorem}
}
\title{$L^{1}$ means of exponential sums with multiplicative coefficients. II.}
\author{Mayank Pandey}
\address{Department of Mathematics, Princeton University, Princeton, NJ 08540, USA}
\email{mayankpandey9973@gmail.com}
\author{Maksym Radziwill}
\address{Department of Mathematics, Northwestern University, 2033 Sheridan Road, Evanston, IL, 60208, USA}
\email{maksym.radziwill@gmail.com}
\begin{document}


\begin{abstract}
  Let $f$ be a real-valued $1$-bounded multiplicative function. Suppose that the mean-value of $f^{2}$ exists,
  and
  $$
  \int_{0}^{1} \Big | \sum_{n \leq N} f(n)e^{2\pi i n \alpha} \Big | d \alpha\leq N^{o(1)}
  $$
  as $N \rightarrow \infty$,
 then there exists a quadratic character $\chi$ such that for every
  $\delta > 0$ the (logarithmic) proportion of primes $p \leq N$ such that $|f(p) - \chi(p)| < \delta$ tends to $1$ as $N \rightarrow \infty$.

  More generally we show that for all $N, \Delta \geq 1$ and $1$-bounded multiplicative functions $f$, if
  \begin{equation}\label{eq:maincond}
    \int_{0}^{1} \Big | \sum_{n \leq N} f(n) e^{2\pi i n \alpha} \Big | d \alpha \leq \Delta
  \end{equation}
  and the $L^{2}$ norm of $f$ over $[1, N]$ is $\geq N / 100$,
  then $f$ pretends to be a multiplicative character of conductor $\leq \Delta^{2}$ on primes
  in $[\Delta^{2}, N]$. We highlight that the result is uniform in $f$, $N$ and $\Delta$ and sharp as far as the
  size of the conductor goes. Moreover, the restriction to primes $p \in [\Delta^{2}, N]$ turns out to be sharp
  in a suitably generalized version of this result, concerning sequences $f$ that are close $1\%$ of the
  time to multiplicative functions.

\end{abstract}
\maketitle
\section{Introduction}
It was conjectured by Littlewood that for any finite $S \subset \mathbb{N}$,
\begin{equation} \label{eq:little}
\int_{0}^{1} \Big | \sum_{n \in S} e^{2\pi i n \alpha} \Big | d \alpha > c \log |S|
\end{equation}
with $c > 0$ an absolute constant. Littlewood also conjectured that as $S$ varies among subsets of $\mathbb{N}$ of cardinality $N$ the expression \eqref{eq:little} is minimized when  $S = [1, N]$. The former conjecture was resolved by Konyagin~\cite{littlewood2} and McGehee-Pigno-Smith~\cite{littlewood}, while the latter remains open. The work of McGehee-Pigno-Smith \cite{littlewood} implies that for any sequence $a : \mathbb{N} \rightarrow \mathbb{C}$ such that $|a(n)| \geq 1$ for all $n \geq 1$,
$$
\int_{0}^{1} \Big | \sum_{n \leq N} a(n) e(n \alpha) \Big | d \alpha \geq c \log N,
$$
with $c > 0$ an absolute constant.
It is expected that
\begin{equation} \label{eq:l1norm}
\int_{0}^{1} \Big | \sum_{n \leq N} a(n) e(n \alpha) \Big | d \alpha
\end{equation}
is small when $a(n)$ is in some sense ``additive''. For example, it is conjectured \cite{Green} that if $a(n)$ is the indicator function of a set $S \subset \mathbb{N}$ and there exists a $K > 0$ such that \eqref{eq:l1norm} is $\leq K \log N$
for all $N \geq 2$
then there exists arithmetic progressions $P_{1}, \ldots, P_{J}$ with $J = O_{K}(1)$ such that,
$$
\lim_{N \rightarrow \infty }\frac{1}{N} \sum_{n \leq N} \Big | \mathbf{1}_{S}(n) - \sum_{1 \leq i \leq J} c_{i} \mathbf{1}_{P_{i}}(n) \Big | = 0.
$$
with $c_{i} \in \{-1,1\}$.
It is reasonable to expect that multiplicative functions such as the M\"obius function (denoted $\mu$) or the Liouville function (denoted $\lambda$) contain no tangible additive structure. As a result when $a \in \{\mu, \lambda\}$ the $L^1$ norm \eqref{eq:l1norm} should be as large as possible, that is $\gg N^{1/2 - o(1)}$.
Lower bounds for $L^1$ norms \eqref{eq:l1norm} with $a \equiv \mu$ or $a \equiv \lambda$ are the subject of works of Balog-Perelli \cite{BalogPerelli} and respectively Balog-Ruzsa \cite{BalogRuzsa1, BalogRuzsa2}. Recently the authors showed that in both cases the left-hand side of \eqref{eq:l1norm} is at least $\gg N^{1/4 - o(1)}$. The proof of this lower bound crucially uses the connection of $\mu$ and $\lambda$ with zeros of $L$-functions, and we note that even on the assumption of the Generalized Riemann Hypothesis stronger lower bounds are not currently known. Besides the Liouville and M\"obius function, other multiplicative functions such as the indicator function of $k$-free numbers \cite{etal} or coefficients of modular forms \cite{Pandey} have received attention, with more complete results.

More generally one would like to assert that \eqref{eq:l1norm} is large (i.e $\gg N^{c}$ for some constant $c > 0$) whenever $f$ is a multiplicative function. This is however false. The simplest counterexample is of course $f(n) = 1$. Moreover $f(n) = \chi(n)$ with $\chi$ a quadratic character provides another set of counterexamples. Our first result shows that for real-valued $f$ those are the only obstructions.
\begin{corollary} \label{cor:first}
  Let $f : \mathbb{N} \rightarrow \mathbb{R}$ be a $1$-bounded multiplicative function such that,
  $$
  \liminf_{N \rightarrow \infty} \frac{1}{N} \sum_{n \leq N} f(n)^{2} > 0.
  $$
  Suppose that there exists a $\psi(N) \rightarrow 0$ arbitrarily slowly with $N \rightarrow \infty$ such that,
  $$
  \int_{0}^{1} \Big | \sum_{n \leq N} f(n) e(n \alpha) \Big | d \alpha \leq N^{\psi(N)}.
  $$
  Then, there exists a quadratic Dirichlet character $\chi$ such that,
  $$
  \sum_{p \leq N} \frac{1 - f(p) \chi(p)}{p} = o \Big ( \sum_{p \leq N} \frac{1}{p} \Big )
  $$
  as $N \rightarrow \infty$. That is for any $\delta > 0$, the proportion of $p$ (in a logarithmic sense) such that
  $|\chi(p) - f(p)| < \delta$ tends to one.
\end{corollary}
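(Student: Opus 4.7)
The plan is to apply Theorem \ref{thm:main} to $f$ at each scale $N$, then to exploit that $f$ is real-valued to force the resulting character to be quadratic and the associated $t$ to be vanishingly small, to stabilize the character as $N$ varies, and finally to translate the resulting bounded pretentious distance on large primes into the claimed statement on all primes. To set up: the hypothesis $\liminf_{N \to \infty} \frac{1}{N} \sum_{n \le N} f(n)^2 > 0$ gives some $c_0 > 0$ with $\sum_{n \le N} f(n)^2 \ge c_0 N$ for all large $N$; then $\varepsilon := c_0/4$ and $c := c_0/2$ put $f \in \mathcal{N}_{c, \varepsilon, f, f}$ for $N$ large (the second condition defining $\mathcal{N}$ is vacuous when $g = f$). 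Set $\Delta_N := \max(1/c, N^{\psi(N)})$; this lies in $[1/c, N^{\gamma}]$ for $N$ large since $\psi(N) \to 0$. Applying Theorem \ref{thm:main} with $g = f$ and $\Delta = \Delta_N$ produces, for each large $N$, a primitive character $\chi_N \pmod{q_N}$ and real $t_N$ with $(1+|t_N|) q_N \ll \Delta_N^2$ and
\begin{equation*}
\sum_{\substack{\Delta_N^2/q_N \le p \le N \\ (p, q_N) = 1}} \frac{1 - \Re \bar f(p) \chi_N(p) p^{i t_N}}{p} \ll 1.
\end{equation*}

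Since $f$ is real, $\sum_p (1 - \Re f(p) \bar g(p))/p = \sum_p (1 - \Re f(p) g(p))/p$ for any $g$, so the theorem's bound for $(\chi_N, t_N)$ is equivalent to the same bound for $(\bar\chi_N, -t_N)$. Interpreting each as a squared pretentious distance from $f$ to a unit-modulus multiplicative function and applying the triangle inequality, $\chi_N^2 p^{2 i t_N}$ is pretentious to $1$ with bounded distance on the same prime range. But $\chi_N^2$ has conductor at most $q_N = N^{o(1)}$ and $|t_N| \le \Delta_N^2 = N^{o(1)}$, so classical pretentious-distance lower bounds (in the spirit of Hal\'asz and Granville--Soundararajan, via zero-free regions for Dirichlet $L$-functions) force this sum to grow unboundedly with $N$ unless $\chi_N^2$ is principal and $|t_N| = O(1/\log N)$. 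Hence for all $N$ large, $\chi_N$ is a quadratic character and $|t_N| \ll 1/\log N$. The same triangle-plus-lower-bound argument applied to $\chi_{N_1}$ and $\chi_{N_2}$ for $N_1 < N_2$ large forces $\chi_N$ to be eventually constant, say $\chi_N = \chi$ of some fixed bounded conductor $q$: otherwise $\chi_{N_1} \chi_{N_2}$ would be a non-principal real character of conductor $N^{o(1)}$ with bounded pretentious distance to $1$ on a long range of primes, again a contradiction.

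Substituting the fixed $\chi$ and the tiny $t_N$ into the first display, and using $\sum_p (1-\cos(t_N \log p))/p \ll 1$ when $|t_N| \ll 1/\log N$, I obtain $\sum_{\Delta_N^2/q \le p \le N,\, (p,q)=1}(1 - f(p)\chi(p))/p \ll 1$. The primes missing from this sum contribute at most $2 \log\log(\Delta_N^2/q) + O(1) = O(\log\log \Delta_N)$ to $\sum_{p \le N}(1 - f(p)\chi(p))/p$, and the main technical obstacle is to show this is $o(\log\log N)$. The plan is to bootstrap. Once $f$ is known to pretend to a fixed $\chi$ on $[\Delta_N^2/q, N]$, a Hal\'asz-type comparison of $\sum_n f(n) e(n\alpha)$ with the character sum $\sum_n \chi(n) e(n\alpha)$ (whose $L^1$-norm is polylogarithmic in $N$) improves the bound $\int_0^1 |\sum_n f(n) e(n\alpha)|\,d\alpha \le N^{\psi(N)}$ to the much sharper $\ll (\log N)^{O(1)}$. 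Reapplying Theorem \ref{thm:main} with $\Delta \ll (\log N)^{O(1)}$ then controls the pretentious-distance sum on primes $\ge (\log N)^{O(1)}$, so the missing-prime contribution becomes $O(\log\log\log N) = o(\log\log N)$, yielding the stated conclusion.
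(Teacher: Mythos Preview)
Your first three steps (applying the main theorem, forcing the character to be quadratic via the pretentious triangle inequality applied to $(\chi_N,t_N)$ and $(\overline{\chi_N},-t_N)$, and stabilizing the character across scales) are essentially the paper's argument, carried out with the paper's Lemmas \ref{le:hastobereal} and \ref{le:twoquad}.

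The gap is your final bootstrap. You correctly observe that a single application of the main theorem only controls primes in $[\Delta_N^2/q,N]$, and that the missing primes may contribute $\asymp \log\log \Delta_N$, which need not be $o(\log\log N)$ when $\psi(N)\to 0$ slowly. Your proposed fix---compare $\sum_n f(n)e(n\alpha)$ with $\sum_n \chi(n)e(n\alpha)$ via ``Hal\'asz'' and deduce $\|S_f\|_1\ll(\log N)^{O(1)}$---does not work. Bounded pretentious distance on primes $p\ge \Delta_N^2/q$ places no constraint whatsoever on $f(p)$ for small $p$, so $f$ could differ from $\chi$ arbitrarily on those primes; there is no mechanism by which Hal\'asz (which concerns mean values, not $L^1$ norms of exponential sums) would force $\|S_f\|_1$ down to polylogarithmic size. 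You are using the conclusion you want (tiny $\Delta$) to justify itself.

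The paper avoids this circularity by a multi-scale argument rather than a bootstrap. For each small $\varepsilon>0$ it sets up geometric scales $N_{i-1}=N_i^{\varepsilon^2}$ and applies the main theorem at \emph{each} $N_i$ with $\Delta=N_i^{\varepsilon^2}$, obtaining $\sum_{N_{i-1}\le p\le N_i}(1-f(p)\chi_i(p))/p\le c$ on every block. It then introduces overlapping intermediate scales $M_{i-1}=N_i^{\varepsilon}$, $M_i=N_i^{1/\varepsilon}$ and applies the theorem again there; the overlaps are long enough for Lemma \ref{le:twoquad} to force $\chi_i=\psi_i=\chi_{i+1}$, so all $\chi_i$ coincide with a single $\chi=\chi_\varepsilon$. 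Summing over blocks gives $\sum_{p\le N}(1-f(p)\chi_\varepsilon(p))/p\ll (\log\log N)/\log(1/\varepsilon)$. Finally one checks (again via Lemma \ref{le:twoquad}) that $\chi_\varepsilon$ is independent of $\varepsilon$ once $\varepsilon$ is small, and letting $\varepsilon\to 0$ gives the $o(\log\log N)$ conclusion. The point is that the savings come from applying the theorem at many scales simultaneously, not from improving the input $L^1$ bound.
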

To gauge the strength of the result, notice that until the recent work of the authors, already for $f = \mu$ it was not ruled that the $L^{1}$ norm is less than $N^{1 / \log\log\log N}$ for all $N$.

Corollary \ref{cor:first} concerns only real-valued one-bounded multiplicative function, we will now discuss the situation for complex-valued multiplicative functions. Let $\chi$ be a primitive Dirichlet character of modulus $q$ and let $t \in \mathbb{R}$ be a real-number. A multiplicative function $f$ defined by setting $f(n) = \chi(n) n^{it}$ for all $n \geq 1$ is called a \textit{multiplicative character} and the real number $q (1 + |t|)$ is known as the \textit{conductor} of $f$. It is not hard to show (using Poisson summation) that if $f$ is a multiplicative character of conductor $\leq \Delta^{2}$ and $W$ is a smooth function compactly supported in $(0,1)$, then,
$$
\int_{0}^{1} \Big | \sum_{n \leq N} f(n) e(n \alpha) W \Big ( \frac{n}{N} \Big ) \Big | d \alpha \ll \Delta.
$$
Our main theorem is uniform in $\Delta$ and $N$ and classifies complex valued $1$-bounded multiplicative functions $f$ such that
\begin{equation} \label{eq:l1norm'}
\int_{0}^{1} \Big | \sum_{n \leq N} f(n) e(n \alpha) \Big | d \alpha \leq \Delta.
\end{equation}
The upshot is that \eqref{eq:l1norm'} can happen only if $f$ is at least in part overlapping with a multiplicative character of conductor $\ll \Delta^2$. We expect that in applications the uniformity in $\Delta, N$ and $f$ of our theorem will be useful. In practice assumptions that are valid for all $N$ large enough, such as the one appearing in Corollary \ref{cor:first} are seldom available.
\specialtheorem
\begin{remark}
  Notice that the conclusion is vacuous if $\Delta \geq N^{\eta}$ for some $\eta = \eta(c) > 0$. We could have therefore introduced the additional assumption $1 \leq \Delta \leq N^{\eta}$ without loss of generality. 
\end{remark}
The conclusion of our theorem concerns only primes larger than $\Delta^{2}$. At first this is a little puzzling.
If one generalizes our result just a little bit (as we will do now), then this restriction turns out to be optimal, thus explicating the origin of this condition.
\maintheorem
\begin{remark}
  Notice that the conclusion of this theorem concerns primes in the interval $[\Delta^{2} / q; N]$ rather than $[\Delta^{2}; N]$, this comes at the expense of adding an explicit assumption that $\Delta \leq N^{\gamma}$ with $\gamma$ depending on $\varepsilon$ and $c$. Moreover pretentious distance is bounded solely in terms of $\varepsilon$, this comes at the expense of adding the assumption $1 / c \leq \Delta$.
\end{remark}
We note that Theorem A follows from Theorem B.
We record here another application of Theorem B.
\begin{corollary} \label{cor:second}
  Let $f_{1}, \ldots, f_{R} : \mathbb{N} \rightarrow \mathbb{R}$ be a $1$-bounded multiplicative functions such that,
  $$
  \liminf_{N \rightarrow \infty} \frac{1}{N} \sum_{n \leq N} f_{i}(n)^{2} > 0 \ , \ \lim_{N \rightarrow \infty} \frac{1}{N} \sum_{n \leq N} f_{i}(n) f_{j}(n) = 0.
  $$
  Let
  $$
  g(n) = \sum_{i \leq R} c_{i} f_{i}(n)
  $$
  with $c_{i} \in \mathbb{C}$ non-zero.
  Let $\psi(N) \rightarrow 0$ arbitrarily slowly with $N \rightarrow \infty$. Suppose that for all $N \geq 1$
  $$
  \int_{0}^{1} \Big | \sum_{n \leq N} g(n) e(n \alpha) \Big | d \alpha \leq N^{\psi(N)},
  $$
  Then, there exists primitive quadratic Dirichlet characters $\chi_{i}$ such that,
  $$
  \lim_{N \rightarrow \infty} \Big ( \sum_{p \leq N} \frac{1}{p} \Big )^{-1} \sum_{p \leq N} \frac{1 - f_{i}(p) \chi_{i}(p)}{p} = 0
  $$
  for each $1 \leq i \leq R$. That is for any $1 \leq i \leq R$ and $\delta > 0$, the proportion of $p$ (in a logarithmic sense) such that $|\chi_{i}(p) - f_{i}(p)| < \delta$ tends to one.
\end{corollary}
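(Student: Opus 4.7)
The plan is to apply Theorem~\ref{thm:main} separately for each index $i \in \{1, \ldots, R\}$, taking $f := f_i$ and ``$g$'' to be the rescaled sequence $\tilde g := g/c_i$; asymptotic orthogonality of the $f_j$'s ensures that $\tilde g$ is $L^2$-close to $f_i$, so Theorem~\ref{thm:main} can detect the pretentious structure of $f_i$ individually. Since $\int_0^1 |\sum_n \tilde g(n) e(n\alpha)|\, d\alpha \leq \Delta/|c_i|$, the $L^1$ hypothesis of Theorem~\ref{thm:main} is inherited directly, and only the two $L^2$ conditions need to be verified.

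By Wirsing's mean-value theorem for non-negative multiplicative functions, $m_j := \lim_N N^{-1}\sum_{n \leq N} f_j(n)^2$ exists and is positive by hypothesis; setting $\sigma := \sum_j |c_j|^2 m_j$, the orthogonality $N^{-1}\sum_n f_j f_k \to 0$ for $j \neq k$ together with reality of the $f_j$ gives
\[
\frac{1}{N}\sum_{n \leq N}|g(n)|^2 \to \sigma, \qquad \frac{1}{N}\sum_{n \leq N}|g(n) - c_i f_i(n)|^2 \to \sigma - |c_i|^2 m_i.
\]
Dividing by $|c_i|^2$ and restricting to $[\varepsilon N, (1-\varepsilon)N]$, both hypotheses of Theorem~\ref{thm:main} are satisfied for every fixed $\varepsilon < |c_i|^2 m_i/(3\sigma)$ and all $N$ large: the first holds with $c := (1-3\varepsilon)\sigma/(2|c_i|^2)$, and the second reduces to the inequality $\sigma - |c_i|^2 m_i < (1-\varepsilon)(1-2\varepsilon)\sigma$, which is equivalent to our choice of $\varepsilon$. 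Fixing such an $\varepsilon_0 > 0$ (depending only on the $c_j$ and the $m_j$), Theorem~\ref{thm:main} produces $t_i \in \mathbb{R}$ and a primitive Dirichlet character $\chi_i$ of modulus $q_i$ with
\[
(1+|t_i|) q_i \ll \rho(\varepsilon_0)(\Delta/|c_i|)^2, \qquad \Bigl|\sum_{\substack{\Delta^2/q_i \leq p \leq N \\ (p,q_i)=1}} \frac{1 - \Re \overline{f_i(p)} \chi_i(p) p^{it_i}}{p}\Bigr| \ll \rho(\varepsilon_0).
\]

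Since $f_i$ is real-valued, the pair $(\overline{\chi_i}, -t_i)$ satisfies the same bound, so by the triangle inequality for the Granville--Soundararajan pretentious distance $\chi_i^2 \cdot n^{2it_i}$ lies within bounded pretentious distance of the trivial character on $[\Delta^2/q_i, N]$. Since a non-trivial Dirichlet character of bounded conductor has pretentious distance from $1$ diverging with $N$, this forces $\chi_i^2$ principal and $t_i = 0$, so $\chi_i$ is quadratic.

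To upgrade the fixed-$\varepsilon_0$ bound $\ll \rho(\varepsilon_0)$ to the desired $o(\sum_{p \leq N} 1/p)$ conclusion, we follow the limiting argument used for Corollary~\ref{cor:first}: let $\varepsilon_0 = \varepsilon_0(N) \to 0$ slowly enough that $\rho(\varepsilon_0(N)) = o(\log\log N)$ while $\psi(N) \leq \gamma(\varepsilon_0(N))$ remains valid, stabilize the sequence $\chi_i^{(N)}$ to a single quadratic character $\chi_i$ by compactness of bounded-conductor characters together with the discreteness of distinct characters in the pretentious metric, and control the contribution of the short primes $p \leq \Delta^2/q_i$ separately. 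Balancing $\varepsilon_0(N)$ against the growth of the conductor bound $q_i \ll \rho(\varepsilon_0)\Delta^2$ --- so that the ``lost'' range of primes is genuinely $o(\log\log N)$ --- is the main technical obstacle and is identical to the balancing problem already faced in the proof of Corollary~\ref{cor:first}.
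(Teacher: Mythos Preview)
The paper does not prove this corollary, noting only that the proof is similar to Corollary~\ref{cor:first}. Your reduction is the right one: applying Theorem~\ref{thm:main} to each $f_i$ with $\tilde g = g/c_i$ and checking the two $L^2$ hypotheses via the asymptotic orthogonality of the $f_j$ is exactly the new ingredient needed beyond Corollary~\ref{cor:first}, and your verification of those hypotheses is correct. (A small point: the pretentious argument does not give $t_i = 0$ exactly, only $|t_i| \ll 1/\log N$, but this is harmless since then $\mathbb{D}(f_i, \chi_i n^{it_i}; I)^2 = \mathbb{D}(f_i, \chi_i; I)^2 + O(1)$.)

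Your final paragraph, however, misdescribes the passage from an $O(1)$ bound to the $o(\log\log N)$ conclusion. Sending $\varepsilon_0(N) \to 0$ in a \emph{single} application of Theorem~\ref{thm:main} at scale $N$ cannot work: that application only controls primes in $[\Delta^2/q_i, N] \supset [N^{2\psi(N)}, N]$, and when $\psi(N) \to 0$ slowly the omitted range $p \leq N^{2\psi(N)}$ carries $\log\log N + \log(2\psi(N)) \sim \log\log N$ of the total mass of $\sum_{p \leq N} 1/p$ --- essentially all of it. Shrinking $\varepsilon_0$ only makes $\rho(\varepsilon_0)$ and the conductor bound worse, not better. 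The actual mechanism in Corollary~\ref{cor:first} is different: one \emph{fixes} $\varepsilon$, applies the theorem along a chain of scales $N_{i-1} = N_i^{\varepsilon^2}$, uses an interleaved chain $M_i$ to show the characters at adjacent scales coincide (via Lemma~\ref{le:twoquad}), sums the resulting $O(1)$ bounds over $\asymp \log\log N / \log(1/\varepsilon)$ scales, and only then lets $\varepsilon \to 0$, checking separately that the limiting character is independent of $\varepsilon$. This multi-scale gluing, not a balancing of $\varepsilon_0(N)$ against conductor growth, is what you need to invoke for each $i$.
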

We do not prove Corollary \ref{cor:second}, the proof is similar to the proof of Corollary \ref{cor:first}.

In general, the conclusion of Theorem B is sharp, i.e we cannot conclude any statement about primes smaller than $\Delta^{2}$ in the pretentious distance. In particular to further improve Theorem A we need a method of proof that uses more than just the closeness in $L^{2}$ of $g$ to a multiplicative function $f$. To illustrate the cases in which Theorem B is sharp we focus on the case $q = 1$. Consider any multiplicative function $f$ with $|f| \leq 1$ and such that $f(p) = 1$ for $p \geq \Delta^{1 / 2A}$. Let $L := \Delta^{1/2A}$. Let $g$ be any reasonable sieve weight approximating $f$ in $L^{2}$ and having level of distribution $\sqrt{\Delta}$. It is then the case that
$$
\sum_{n \leq N} |g(n) - f(n)|^{2} \leq \frac{1}{2} \sum_{n \leq N} |g(n)|^{2}
$$
provided that $A$ is taken large enough but fixed. At the same time the $L^{1}$ norm of $\sum_{n \leq N} g(n) e(n \alpha)$ is small because $g$ is a sieve weight, in fact easily seen to be $\ll \Delta$. Since the function $f$ was arbitrarily defined on primes $\leq L$ we conclude that the hypothesis of our theorem is not strong enough to make any conclusion about the behavior of $f$ on primes $\leq L$. Since
$$
\sum_{L \leq p \leq \Delta^{2}} \frac{1}{p} \ll \log A = O(1)
$$
we see that we cannot assert anything about primes $\leq \Delta^{2}$ in the pretentious distance.

We now present a brief description of our proof.
A more detailed description can be found in the next section. Consider the special case when $g = f$ is a multiplicative function.
The guiding idea in our proof (going back to Balog-Perelli) is that if the $L^{1}$ norm of
\begin{equation} \label{eq:trig1}
\alpha \mapsto \sum_{n \leq N} f(n) e(n \alpha)
\end{equation}
is small, then this is due to the majority of the
mass of the trigonometric polynomial \eqref{eq:trig1}
accumulating in a set of $\alpha$ of small measure.
When $f$ is multiplicative it is reasonable to posit that this small measure set of $\alpha$ corresponds to the major arcs
\majorarcsdef
with $Q \asymp \Delta^{2}$. This is in fact what we show: if the $L^{1}$ norm of \eqref{eq:trig1} is small, then
$$
N \ll \int_{\mathfrak{M}_{Q, N}} \Big | \sum_{n \leq N} f(n) e(n \alpha) \Big |^{2} d \alpha \ , \ Q \asymp \Delta^{2}.
$$
We then conclude by showing that the latter implies that
$$
\sum_{\Delta^{2} / q \leq p \leq N} \frac{1 - \Re \overline{f(p)} p^{it} \psi(p)}{p} = O(1).
$$
for some primitive character $\psi$ of conductor $q$ and a $t \in \mathbb{R}$ with $q (1 + |t|) \ll \Delta^{2}$.
The latter conclusion is optimal: If $f$ is a multiplicative function with $f(p) = 1$ for $Q \leq p \leq N$ and $Q$ less than a small power of $N$, then one can show, by approximating $f$ by a sieve weight and then applying Poisson summation, that
$$
\int_{\mathfrak{M}_{Q, N}} \Big | \sum_{n \leq N} f(n) e(n \alpha) \Big |^{2} d \alpha \asymp \sum_{n \leq N} |f(n)|^{2}.
$$
The same limitation arises also when $f$ is equal to a character of small conductor on large primes.


\subsection{Generalizations and other consequences}

Our main result can be extended to any $L^{p}$ norm with $p \in (0,2)$. It seems likely that our methods can extend to the case of divisor bounded multiplicative functions for which efficient sieve majorants exists.
Finally, one can use Proposition A along with Voronoi summation and multiplicativity to show that if $\lambda_\pi(n_1,\dots,n_d)$
are the Fourier coefficients of $\pi$, a $\GL_d$ cuspidal automorphic representation, then
\begin{equation*}
\int_0^1 \biggl| \sum_{n\le N} \lambda_\pi(n,1,\dots,1) e(n\alpha) \biggr|d\alpha\gg N^{\frac{1}{d + 3/2} - o(1)}.
\end{equation*}
This is new for $d\ge 4$, and is the first nontrivial power-saving bound known for the exponential sum in those cases.
We leave the details of this to the interested reader. 
\section*{Acknowledgments}
MR acknowledges support of NSF grant DMS-2401106. The authors would like to thank Andrew Granville and Dimitris Koukoulopoulos for their interest and comments.



\section{Notation}
For any measurable $1$-periodic function $f : \mathbb{R} \rightarrow \mathbb{C}$ and any $p \geq 1$ we will write
$$
\|f \|_{p} := \Big ( \int_{0}^{1} |f(t)|^{p} dt \Big )^{1/p}.
$$
For a smooth function $f : \mathbb{R} \rightarrow \mathbb{C}$ we define for $p \geq 1$ and $r \geq 1$,
$$
\| f \|_{p, r} = 1 + \sum_{j = 0}^{r} \Big ( \int_{\mathbb{R}} |f^{(j)}(x)|^{p} dx \Big )^{1/p}.
$$
Notice that this is not the usual Sobolev norm because of the inclusion of the term $+1$. This allows 
for inequality of the type $\| W \|_{1,2} \ll \| W \|_{2,2}^2$.

\section{Outline of the argument}
The proof of our main theorem is based on the following criterion.
We have made no effort to optimize the dependence on $\delta_1, \delta_2, \delta_3$ in the final bound.
\maincriterion

Given an interval $I$ we define
$$
c_{1}(n; I) = \Bigl ( \sum_{p \in I} \frac{1}{p} \Bigr )^{-1} \sum_{\substack{p | n \\ p \in I}} 1,
$$
and
$$
c_{2}(n; I) = \Bigl ( \sum_{p \in I} \frac{1}{p} \Bigr )^{-1} \Bigl ( \sum_{p \in I} \frac{1}{p} - \sum_{\substack{p | n \\ p \in I}} 1  \Bigr )
$$
Since $c_{1} + c_{2} = 1$, any sequence $a : \mathbb{N} \rightarrow \mathbb{C}$ can be decomposed as
$$
a(n) = a(n)c_{1}(n) + a(n) c_{2}(n).
$$
By a variant of Turan-Kubilius we know that $a(n)c_{2}(n)$ has to be small on average in the following sense:

\turankubiliuslemma

\begin{proof}
  Since $|a| \leq 1$ we have
  $$
  \int_{0}^{1} \Big | \sum_{n \geq 1} a(n) c_{2}(n) e(n \alpha) \Big |^{2} d\alpha = \sum_{n \geq 1} |a(n) c_{2}(n)|^{2} \leq \sum_{n \geq 1} |c_{2}(n)|^{2}.
  $$
  By the Turan-Kubilius inequality, the above is
  $$
  \leq 4 N\Bigl ( \sum_{p \in I} \frac{1}{p} \Bigr )^{-1}
  $$
  for all $N$ sufficiently large.
\end{proof}

We define our major arcs to be
\majorarcsdef
and we also set
$$
\mathfrak{m}_{Q, N} := [0,1] \backslash \mathfrak{M}_{Q, N}.
$$
We then have the following standard bound for bilinear forms.
\minorarcbound
If $W$ is a smooth compactly supported function with $\widehat{W}(0) \neq 0$, $\alpha = 1 / (Q + 1) \in \mathfrak{m}_{Q, N}$ and $f(n) = \chi(n)$ a primitive Dirichlet character with conductor $Q + 1$, then, by Poisson summation,
$$
\sum_{n} f(n) e(n \alpha) W \Big ( \frac{n}{N} \Big ) \asymp \frac{N}{\sqrt{Q}}.
$$
In addition if $Q + 1$ is a Siegel zero modulus then the presence of the factor $c_{1}(n; I)$ does not yield additional cancellations. Thus, given the current state of knowledge our bound in \eqref{le:minorarcbound} is optimal.
We now come to bounding the integral over major arcs. We first notice that we can assume without loss of generality that the multiplicative function $f$ is completely multiplicative. Give a multiplicative function $f$ we define for every $A \geq 1$ a completely multiplicative function $f_{\geq A}$ given by
\fAdefinition
We then have the following result which amounts to a form of presieving.
\reduction
In order to state our bound for the integral over the major arcs we need to introduce a notion of pretentious distance
\pretentiousDistanceDef
We then have the following result.
\smallmajorarcs
With all of these lemmas we are now ready to give a proof of our main theorem.
\begin{proof}[Proof of Main Theorem]
  By assumption,
  $$
  \sum_{\varepsilon N \leq n \leq (1 - \varepsilon) N} |g(n)|^{2} \geq c N.
  $$
  To each $\varepsilon > 0$ we associate a function $W_{\varepsilon}$ which we require to be
  \begin{enumerate}
    \item smooth, non identically zero
    \item compactly supported in $[\varepsilon / 2,1 - \varepsilon / 2]$
    \item $W_{\varepsilon}(x) = 1$ for $\varepsilon < x < 1 - \varepsilon$
    \item $0\leq W(x) \leq 1$ for all $x \in \mathbb{R}$
    \item for all $j \geq 1$ and $x \in \mathbb{R}$, $|W^{(j)}(x)| \leq c_{j} \varepsilon^{-j}$, with $c_{j}$ depending only on $j$.
  \end{enumerate}
  In this way, if we set
  $$
  a(n) := g(n) W_{\varepsilon} \Bigl ( \frac{n}{N} \Bigr ),
  $$
  we still have
  \begin{equation}
    \label{eq:a_l2_lower}
    \sum_{n} |a(n)|^{2} \geq \sum_{\varepsilon N \leq n \leq (1 - \varepsilon) N} |g(n)|^{2} \geq c N.
  \end{equation}
  From now on, we write $W = W_{\varepsilon}$. First by a simple harmonic analysis argument (see e.g \cite[Lemma 2]{PandeyRadz}),
  \begin{equation} \label{eq:above}
    \int_{0}^{1} \Big | \sum_{n} g(n) e(n \alpha) W \Bigl ( \frac{n}{N} \Bigr ) \Big | d \alpha \ll \frac{1}{\varepsilon} \int_{0}^{1} \Big | \sum_{n \leq N} g(n) e(n \alpha) \Big | d \alpha.
  \end{equation}
  Let $I = [Q, N^{1/100}]$ with a $Q \leq N^{1/100}$. Let $\mathfrak{M} := \mathfrak{M}_{Q, N}$ with $\mathfrak{M}_{Q, N} $ as defined in \eqref{eq:majordef} and $\mathfrak{m} := [0,1] \backslash \mathfrak{M}$.
  Recall the hypothesis of the main theorem that
  $$
  \int_{0}^{1} \Big | \sum_{n \leq N} g(n) e(n \alpha) \Big | d \alpha \leq \Delta.
  $$
  Then, (\ref{eq:above}) implies that
  $$
  \int_{0}^{1} \Big | \sum_{n} g(n) e(n \alpha) W \Bigl ( \frac{n}{N} \Big) \Big | d \alpha\le
  \Delta_1\Bigl ( \frac{1}{N} \sum_{n} |a(n)|^{2} \Bigr )^{1/2}.
  $$
  where $\Delta_1 =c_2 \Delta  / (\sqrt{c} \varepsilon)$ for some absolute constant $c_2 > 0$.
  Now, decompose $a = a_1 + a_2 + a_3$ with
  \begin{align*}
    a_1(n) &= f(n)c_1(n; I)W\pfrc{n}{N}, \\
    a_2(n) &= f(n) c_2(n; I)W\pfrc{n}{N}, \\
    a_3(n) &= (g(n) - f(n))W\pfrc{n}{N}.
  \end{align*}
  We prepare to apply Proposition A.
  Write
  \begin{equation*}
    S_i(\alpha) = \sum_n a_i(n)e(n\alpha).
  \end{equation*}
  By assumption,
  \begin{align}
    \label{eq:a3_l2_upper1}
    \|S_{3}\|_{2}^{2} = \sum_{n} |a_{3}(n)|^2 & \leq \sum_{n \leq N} |g(n) - f(n)|^{2} \\ \nonumber & \leq ( 1 - \varepsilon ) \sum_{\varepsilon N \leq n \leq (1 - \varepsilon) N} |g(n)|^{2} \\ \nonumber & \leq (1 - \varepsilon) \sum_{n \leq N} |a(n)|^{2} = (1 - \varepsilon) \| S \|_{2}^{2}
  \end{align}
  And thus, since $\sqrt{1 - \varepsilon} \leq 1 - \varepsilon / 2$,
  \begin{equation}
    \label{eq:s3_upper}
  \| S_{3} \|_{2} \leq \Big ( 1 - \frac{\varepsilon}{2} \Big ) \| S \|_{2}.
  \end{equation}
  Also, Lemma B implies that if $Q$ is chosen so that
  \begin{equation} \label{eq:reqOne}
    \sum_{p \in [Q, N^{1/100}]} \frac{1}{p} > \frac{4 \cdot 16 / \varepsilon^{2}}{c}
  \end{equation}
  we have,
  \begin{equation}
    \sum_{n} |a_2(n)|^2\le \frac{\varepsilon^{2}}{16} \cdot c N \le \frac{\varepsilon^{2}}{16} \sum_n |a(n)|^2.
  \end{equation}
  Thus, by Plancherel, we have
  \begin{equation}
    \label{eq:s2_upper}
    \norm{S_2}_2\leq \frac{\varepsilon}{4} \norm{S}_2.
  \end{equation} 
  Let $K$ be a smooth function with $0 \leq K \leq 1$ compactly supported in $(1/4, 4)$ and such that,
  $$
  \sum_{k} K \Big ( \frac{n}{2^{k}} \Big ) = 1
  $$
  for every integer $n$.
  Lemma C yields the bound, for each $\varepsilon N \leq M \leq N$,
  \begin{equation}
    \sup_{\alpha\in \mathfrak{m} }\bigg|\sum_n a_1(n) K \Big ( \frac{n}{M} \Big ) e(n\alpha)\biggr| \le \frac{C_{1} \varepsilon^{-2} M}{\sqrt{Q}}
  \end{equation}
  with $C_{1}$ an absolute constant, and thus,
  \begin{equation} \label{eq:minarc_bd_final}
  \sup_{\alpha \in \mathfrak{m}} \Big | \sum_{n} a_{1}(n) e(n \alpha) \Big | \le \frac{2 C_{1} \varepsilon^{-2} N}{\sqrt{Q}} \le
  \frac{2 C_{1} \varepsilon^{-2}}{\sqrt{c Q}} N^{1/2} \cdot \norm{S}_{2}.
  \end{equation}
  By Lemma D applied for $A = \exp(C_{2} \varepsilon^{-4})$, $K = \exp(C_{3} \varepsilon^{-5}) \geq A^{100 \log\log A}$, and $C_{2}, C_{3} > 0$ absolute constants,
  \begin{multline}
    \int_{\mathfrak{M} } |S_1(\alpha) + S_2(\alpha)|^2d\alpha = \int_{\mathfrak{M} } \biggl|\sum_n f(n)W\pfrc{n}{N}\biggr|^2d\alpha \\
    \le K \sup_{|t|\le A}\int_{\mathfrak{M}_{K Q, N}}\biggl|\sum_{n}f_{\ge A}(n)n^{-it}e(n\alpha)W_0\pfrc{n}{N}\biggr|^2\,d \alpha + \Big ( \frac{\varepsilon}{64} \Big )^{2} \sum_{n}|a(n)|^2
  \end{multline}
  and $W_0$ a smooth function compactly supported on a closed interval $I \subset [\varepsilon/4, 2K]$, equal to $1$ on $[\varepsilon / 2, K]$ and such that
  $$
  W_{0}^{(j)}(x) \leq c'_{j} \varepsilon^{-j}
  $$
  for some constants $c'_{j}$ depending only on $j$.
  We introduce again a partition of unity, and use Cauchy-Schwarz to conclude that the left-hand side of the above expression is
  $$
  \leq K^{2} \sum_{ \varepsilon N / 4  \leq M = 2^{k} \leq 2 K N} \sup_{|t| \leq A} \int_{\mathfrak{M}_{K Q, N}} \Big | \sum_{n} f_{\ge A}(n) n^{-it} e(n \alpha) W_{0} \Big ( \frac{n}{N} \Big ) K \Big ( \frac{n}{M} \Big ) \Big |^{2} d \alpha
  $$
  Given $\eta \in (0, \tfrac{1}{100})$, we can apply Proposition E provided that
  $K Q\le N^{\delta}$ with $\delta = \exp(-\eta^{-4})$,
  \begin{multline}
    K^{2} \sum_{\varepsilon N / 4 \leq M = 2^{k} \leq 2 K N}  \sup_{|t|\le A}\int_{\mathfrak{M}_{K Q, N}}\biggl|\sum_{n}f_{\ge A}(n)n^{-it}e(n\alpha)W_0\pfrc{n}{N}\biggr|^2\,d \alpha\\
    \leq K' \Big (\frac{1}{\log(1/\eta)} + \frac{1}{\delta^4}\exp(-2M_{f_{\geq A}, \eta^{-2}}(K Q; N)) \Big )\sum_{n} |a(n)|^2.
  \end{multline}
  for some $K' = \exp(C_{4} \varepsilon^{-5})$ with $C_{4} > 0$ an absolute constant. Here after applying Proposition E we resummed the partiton of unity, and used that $M_{f_{\geq A}, \eta^{-2}}(K Q; M) = M_{f_{\geq A}, \eta^{-2}}(K Q; N) + O (\log (K / \varepsilon))$.
  We choose $\eta = \exp( - \exp(C_{5} \varepsilon^{-5}))$ with $C_{5} > 0$ an absolute constant much larger than $C_{4}$.
  We also assume that,
  $$
  \frac{1}{\delta^4}\exp(-2M_{f_{\geq A}, \eta^{-2}}(K Q; N) \leq \exp( - C_{6} \exp(\varepsilon^{-5}) )
  $$
  with $C_{6}$ much larger than $C_{4}$. This holds under the assumption that,
  $$
  M_{f_{\geq A, \eta^{-2}}}(K Q; N) \geq \exp(\exp(C_{7} \varepsilon^{-5}))
  $$
  for some large absolute constant $C_{7}$.
  Under these assumptions,
  \begin{equation}\label{eq:majarc_bound_final}
    \norm{(S_1 + S_2)\mathbf{1}_{\mathfrak{M} }}_2 \leq  \frac{\varepsilon}{8} \norm{S}_2
  \end{equation}
  Now, by Proposition A, collecting (\ref{eq:s3_upper}), (\ref{eq:s2_upper}), (\ref{eq:minarc_bd_final}), and (\ref{eq:majarc_bound_final}), we have that
  \begin{equation}\label{eq:main_thm_bd_together}
    C_{6} \cdot c \sqrt{Q} \varepsilon^{5} \leq  C_{6} \varepsilon^{5} \sqrt{c Q} \cdot N^{-1/2} \norm{S}_2\\ \le \int_0^1 |S(\alpha)|d\alpha\le \Delta_1
  \end{equation}
  with $C_{6}$ an absolute constant.
  Recalling that $\Delta_{1} = c_{2} \Delta / (\sqrt{c} \varepsilon)$, the above is a contradiction provided that,
 $$
 C_{7} \cdot c^{-3} \varepsilon^{-20} \Delta^{2} < Q.
 $$
 with $C_{7} > 0$ an absolute constant.
 We have thus shown that for any $\varepsilon > 0$ there exists absolute constants $K_{i} > 0$ such that the following are contradictory
 \begin{enumerate}
   \item We have,
         $$
         \sum_{p \in [Q, N^{1/100}]} \frac{1}{p} > \frac{64 \varepsilon^{-2}}{c}.
         $$
   \item We have $Q \leq N^{\delta} \exp(- K_{1} \varepsilon^{-5})$ with $\delta = \exp(-\exp(\exp(K_{2} \varepsilon^{-5})))$,
   \item We have, $K_{3} \cdot c^{-3} \varepsilon^{-20} \Delta^{2} \leq Q$,
   \item We have,
         \begin{equation}
           \label{eq:neg}
         |M_{f_{\geq A}, \eta^{-2}}(U Q; N)| \geq \exp(\exp(K_{4} \varepsilon^{-5})).
       \end{equation}
         with $A = \exp(K_{5} \varepsilon^{-4})$, $\eta = \exp(- \exp(K_{6} \varepsilon^{-5}))$ and $U = \exp(K_{7} \varepsilon^{-5})$
 \end{enumerate}

 We pick $Q = \lceil K_{3} c^{-3} \varepsilon^{-20} \Delta^{2} \rceil$ so that the third condition is satisfied.
 If we assume that $\Delta \leq N^{\gamma}$ with
 $$
 \gamma = \exp \Big (- \frac{1}{c} \exp(\exp(K_{8} \varepsilon^{-5})) \Big )
 $$
 with $K_{8} > 0$ a sufficiently large absolute constant then the first and second conditions are also satisfied.
 Therefore with this choice of $\gamma$ under the assumptions of the theorem and $\Delta \leq N^{\gamma}$, the negation of \eqref{eq:neg} must hold,
 that is,
 $$
 |M_{f_{\geq A}, \eta^{-2}}(U Q; N)| < \exp(\exp(K_{5} \varepsilon^{-5})).
 $$
 This implies that there exists a $t \in \mathbb{R}$ and a primitive character of conductor $q$ with
 $$
 (1 + |t|) q \ll c^{-3} \exp(\exp(K \varepsilon^{-5})) \Delta^{2}
 $$
 and $K > 0$ an absolute constant, such that,
 \begin{equation} \label{eq:bdd}
 \sum_{\Delta^{2}  / q \leq p \leq N} \frac{1 - \Re \overline{f(p)} \psi(p) p^{it}}{p} \leq \exp(\exp(K' \varepsilon^{-5})) + \sum_{p \in [\Delta^{2} / q, U Q / q]} \frac{1}{p}.
\end{equation}
It remains to note that the sum over $p$ is $\ll \varepsilon^{-20}$ provided that $\Delta > 1 / c$, and we are done.


\end{proof}

\section{Orthogonality results}
The main purpose of this section is to collect a number of
results that allow to swap additive harmonics with multiplicative ones, that is,
$$
e \Bigl ( \frac{n a}{q} + n \theta \Bigr ) \leftrightarrow \chi(n) n^{it}.
$$
Such results are essentially well-known and can be proven by a
combination of Gallagher's Lemma and Mellin transforms.

\begin{lemma} \label{le:orthogonality:major}
  Let $a(n)$ be a sequence of complex numbers.
  Let $W$ be a smooth function compactly supported in $(1/4,4)$.
  Then, for all $N \geq 1$ and $X \geq 1$,
  $$
  \int_{|\theta| \leq 1 / X} \Big | \sum_{n} a(n) e(n \theta) W \Bigl ( \frac{n}{N} \Bigr ) \Big |^{2} d \theta
  \ll \frac{1}{N} \int_{\mathbb{R}} \Big | \sum_{n} a(n) n^{-it} W \Bigl ( \frac{n}{N} \Bigr ) \Big |^{2} \min \Bigl ( 1, \frac{(N / X)^{2}}{1 + |t|^{2}} \Bigr ) \,d t,
  $$
  with an absolute constant in the $\ll$.
\end{lemma}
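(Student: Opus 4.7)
The plan is to combine Mellin inversion with Plancherel to pass between the additive side (variable $\theta$) and the multiplicative side (variable $t$). Let $\phi$ be a smooth cutoff compactly supported in $(0, \infty)$ with $\phi \equiv 1$ on the support of $W$. Applying Mellin inversion to $V_\theta(y) := \phi(y)\, e^{2\pi i N y \theta}$, substituting $y = n/N$, and using $\phi(n/N) = 1$ whenever $W(n/N) \ne 0$, one obtains the representation
$$
\sum_n a(n)\, W(n/N)\, e(n\theta) \;=\; \frac{1}{2\pi} \int_{\mathbb R} \tilde V_\theta(it)\, N^{it}\, A(t)\, dt,
$$
where $\tilde V_\theta(it) := \int_0^\infty \phi(y)\, e^{2\pi i N y \theta}\, y^{it-1}\, dy$ and $A(t) = \sum_n a(n)\, n^{-it}\, W(n/N)$ is the multiplicative-side transform appearing in the statement.

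Next I will estimate the Mellin kernel $\tilde V_\theta(it)$. A change of variables identifies it with $\hat h_t(-N\theta)$, where $h_t(y) := \phi(y)\, y^{it-1}$ and $\hat f(\xi) = \int f(y)\, e^{-2\pi i y \xi}\, dy$ is the usual Fourier transform on $\mathbb{R}$. From this identity I will extract two complementary estimates: Plancherel in $\theta$ with the scaling $\xi = -N\theta$ gives $\int_{\mathbb R} |\tilde V_\theta(it)|^2\, d\theta = (1/N)\, \|h_t\|_2^2 \ll 1/N$ uniformly in $t$; repeated integration by parts in $y$ gives $|\tilde V_\theta(it)| \ll_k (1 + N|\theta|)^k / (1+|t|)^k$ for any $k \ge 1$. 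Combined with the trivial bound $|\tilde V_\theta(it)| \ll 1$, and using $1 + N|\theta| \ll N/X$ on $|\theta| \le 1/X$, these yield
$$
\int_{|\theta| \le 1/X} |\tilde V_\theta(it)|^2\, d\theta \;\ll\; \frac{1}{N}\, \min\!\left(1,\, \frac{(N/X)^2}{1+|t|^2}\right).
$$

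To finish, expand $\int_{|\theta|\le 1/X} |F(\theta)|^2\, d\theta$ as a double integral in $(t, s)$ against the kernel $K(t, s) := \int_{|\theta| \le 1/X} \tilde V_\theta(it)\, \overline{\tilde V_\theta(is)}\, d\theta$. A Cauchy--Schwarz applied to the $\theta$-integral gives $|K(t,s)| \ll (1/N)\, \sqrt{\psi(t)\,\psi(s)}$, where $\psi(t) := \min(1, (N/X)^2/(1+|t|^2))$; a second Cauchy--Schwarz in $(t,s)$, with $\sqrt{\psi}$ attached to $|A(t)|$ and $|A(s)|$, then produces the claimed bound. The main obstacle is avoiding a loss of $N/X$ in this last step: a naive bound on $|F(\theta)|^2$ at each fixed $\theta$ followed by integration in $\theta$ yields only a factor $1/X$, so one must carefully exploit the off-diagonal decay of $K(t,s)$ in $|t-s|$ (coming from the smoothness of $h_t$ in $t$) or, alternatively, majorize $\chi_{[-1/X, 1/X]}$ by a positive-definite kernel of Poisson type whose Fourier coefficients are pointwise dominated by a multiple of $\psi$.
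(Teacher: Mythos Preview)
Your argument has a genuine gap at the final step, which you yourself flag but do not close. After the first Cauchy--Schwarz in $\theta$ you obtain $|K(t,s)|\ll N^{-1}\sqrt{\psi(t)\psi(s)}$, and then the bilinear form is only bounded by
\[
\frac{1}{N}\Bigl(\int_{\mathbb R}|A(t)|\sqrt{\psi(t)}\,dt\Bigr)^{2},
\]
which is \emph{not} controlled by $N^{-1}\int |A(t)|^{2}\psi(t)\,dt$; take $A$ bounded below on a long interval to see that the ratio can be arbitrarily large (indeed $\sqrt{\psi}$ is not integrable). No further Cauchy--Schwarz recovers the missing factor. Your two proposed repairs are only gestures: smoothness of $t\mapsto h_t$ gives $\partial_t h_t$ of the same size as $h_t$, so it does not by itself yield decay of the truncated kernel $K(t,s)$ in $|t-s|$ (the untruncated kernel $\int_{\mathbb R}\hat h_t\overline{\hat h_s}$ does decay rapidly in $|t-s|$, but you then have to control the tail $\int_{|\xi|>N/X}$, which for $|t|,|s|\asymp N/X$ is of size $\asymp 1/X$ and does not decay in $|t-s|$ at all); and the positive-definite majorant route, once made precise, is essentially a restatement of Gallagher's lemma. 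Either fix would require real additional work that is absent from the proposal.

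The paper's proof is entirely different and consists of two citations. First, Gallagher's lemma \cite[Lemma~1]{Gallagher} converts the integral over $|\theta|\le 1/X$ into an $L^2$-average over short intervals:
\[
\int_{|\theta|\le 1/X}\Bigl|\sum_n a(n)W(n/N)e(n\theta)\Bigr|^2\,d\theta
\;\ll\;\frac{1}{X^2}\int_{\mathbb R}\Bigl|\sum_{x\le n\le x+X/2}a(n)W(n/N)\Bigr|^2\,dx.
\]
Second, \cite[Lemma~2.2]{Soundararajan} converts this short-interval average directly into the weighted Mellin-side integral with the factor $\min(1,(N/X)^2/(1+|t|^2))$. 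Both inequalities are standard and their combination gives the lemma in two lines. If you want to salvage the direct Mellin approach, the cleanest way is to pass through the short-interval formulation first---which is exactly what the paper does.
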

\begin{proof}
  By Gallagher's Lemma (see Gallagher~\cite[Lemma 1]{Gallagher}),
  $$
  \int_{|\theta| \leq 1 / X} \Big | \sum_{n} a(n) W \Bigl ( \frac{n}{N} \Bigr ) e(n \theta) \Big |^{2} d \theta \ll \frac{1}{X^{2}} \int_{\mathbb{R}} \Big | \sum_{x \leq n \leq x + X/2} a(n) W \Bigl ( \frac{n}{N} \Bigr ) \Big |^{2} \,d x .
  $$
  On the other hand, by \cite[Lemma 2.2]{Soundararajan},
  \begin{align*}
    \frac{1}{X^{2}} \int_{\mathbb{R}} & \Big | \sum_{x \leq n \leq x + X/2} a(n) W \Bigl ( \frac{n}{N} \Bigr ) \Big |^{2} dx \\ & \ll \frac{1}{N} \int_{\mathbb{R}} \Big | \sum_{n} a(n) n^{-it} W \Bigl ( \frac{n}{N} \Bigr ) \Big |^{2}\cdot \min \Bigl ( 1, \frac{(N / X)^{2}}{1 + |t|^{2}} \Bigr ) dt.
  \end{align*}
\end{proof}

We also record the following simple lemma.
\begin{lemma} \label{le:orthogonality:minor}
  Let $a(n)$ be a sequence of complex numbers. Then,
  $$
  \sum_{(a,q) = 1} \Big | \sum_{n} a(n) e \Bigl ( \frac{n a}{q} \Bigr ) \Big |^{2} = \frac{1}{\varphi(q)} \sum_{\chi \pmod{q}} \Big | \sum_{n} a(n) c_{\chi}(n) \Big |^{2},
  $$
  where
  $$
  c_{\chi}(n) := \sum_{\substack{x \pmod{q} \\ (x,q) = 1}} \chi(x) e \Bigl ( \frac{n x}{q} \Bigr ).
  $$
\end{lemma}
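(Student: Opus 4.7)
The plan is to apply Dirichlet character orthogonality twice: once to expand the additive character $e(na/q)$ (for $(a,q)=1$) into the multiplicative-character quantities $c_{\chi}(n)$, and once more after squaring and summing over $a$. This is essentially Plancherel on the finite group $(\mathbb{Z}/q\mathbb{Z})^\times$.

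First, I would establish the Fourier-inversion identity: for any $(a,q)=1$ and any integer $n$,
$$
e\Bigl(\frac{na}{q}\Bigr) = \frac{1}{\varphi(q)} \sum_{\chi \pmod q} \overline{\chi(a)}\, c_{\chi}(n).
$$
To verify this, substitute the definition of $c_\chi(n)$ on the right, swap the order of summation, and apply the orthogonality relation $\varphi(q)^{-1} \sum_\chi \chi(x) \overline{\chi(a)} = \mathbf{1}_{x \equiv a \,(\mathrm{mod}\, q)}$, valid for $(x,q) = (a,q) = 1$. The inner sum over $\chi$ then collapses the sum over $x$ to the single term $x \equiv a$, giving $e(na/q)$.

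Second, setting $F(\chi) := \sum_n a(n) c_\chi(n)$, I multiply by $a(n)$ and sum in $n$ to obtain
$$
\sum_n a(n) e\Bigl(\frac{na}{q}\Bigr) = \frac{1}{\varphi(q)} \sum_{\chi \pmod q} \overline{\chi(a)}\, F(\chi).
$$
Taking $|\cdot|^2$ produces a double sum over pairs $(\chi_1,\chi_2)$; summing over $a$ with $(a,q)=1$ then invokes orthogonality a second time, $\sum_{(a,q)=1} \overline{\chi_1(a)} \chi_2(a) = \varphi(q)\, \mathbf{1}_{\chi_1 = \chi_2}$. This collapses the double sum to the diagonal and yields exactly $\varphi(q)^{-1} \sum_\chi |F(\chi)|^2$, matching the right-hand side of the claimed identity.

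There is no real obstacle here; the only point that requires attention is bookkeeping of complex conjugates and remembering that the expansion of $e(na/q)$ into the $c_\chi(n)$ is only valid for $(a,q)=1$, which is precisely the range over which the left-hand side sums. Since absolute convergence of all the (finite) sums over characters and residues is trivial, the interchanges of summation are automatically justified.
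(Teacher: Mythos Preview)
Your proof is correct and follows exactly the approach sketched in the paper: write $e(na/q) = \varphi(q)^{-1} \sum_\chi \overline{\chi}(a) c_\chi(n)$, expand the square, and apply orthogonality in $a$. The paper's proof is simply a one-line summary of the same argument you have carried out in detail.
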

\begin{proof}
  This follows from writing
  $$
  e \Bigl ( \frac{n a}{q} \Bigr ) = \frac{1}{\varphi(q)} \sum_{\chi \pmod{q}} \overline{\chi}(a) c_{\chi}(n),
  $$
  expanding the square, and executing orthogonality in $a$ over Dirichlet characters.
\end{proof}

Combining Lemma \ref{le:orthogonality:minor} and Lemma \ref{le:orthogonality:major} we obtain the following result.

\begin{lemma} \label{le:majorarcsgallagher}
  Let $1 \leq Q \leq N^{1/10}$, and define the major arcs
  $$
  \mathfrak{M}_{Q, N} := \bigcup_{\substack{(a,q) = 1 \\ q \leq Q}} \Bigl ( \frac{a}{q} - \frac{Q}{q N}, \frac{a}{q} + \frac{Q}{q N} \Bigr ).
  $$
  Let $a(n)$ be a sequence such that $|a(n)| \leq 1$.
  Let $W$ be a smooth compactly supported function in $(1/4, 4)$. Then, for any $A \geq 1$,
  \begin{align*}
    \int_{\mathfrak{M}_{Q, N}} & \Big | \sum_{n} a(n)e(n \alpha) W \Bigl ( \frac{n}{N} \Bigr ) \Big |^{2} d \alpha
    \\ & \ll \frac{1}{N} \sum_{q \leq Q} \frac{1}{\varphi(q)} \sum_{\chi \pmod{q}} \int_{|t| \leq A Q / q} \Big | \sum_{n} a(n) c_{\chi}(n) n^{-it} W \Bigl ( \frac{n}{N} \Bigr ) \Big |^{2} dt + \frac{N}{A^2} \cdot \| W \|_{2,0}^2
  \end{align*}
  with an absolute implicit constant in $\ll$.
\end{lemma}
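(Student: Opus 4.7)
The plan is to dissect the major arcs around each Farey fraction $a/q$, apply Lemmas \ref{le:orthogonality:major} and \ref{le:orthogonality:minor} in succession, and then split the resulting $t$-integral at the threshold $|t| = AQ/q$.

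First, writing $S(\alpha) := \sum_n a(n) W(n/N) e(n\alpha)$, I decompose
\[
\int_{\mathfrak{M}_{Q,N}} |S(\alpha)|^2\, d\alpha = \sum_{q \le Q} \sum_{\substack{a \bmod q \\ (a,q)=1}} \int_{|\theta| \le Q/(qN)} |S(a/q+\theta)|^2\, d\theta.
\]
For each fixed $(q,a)$, I apply Lemma \ref{le:orthogonality:major} to the shifted sequence $n \mapsto a(n) e(na/q)$ with the choice $X = qN/Q$ (so that $1/X$ matches the arc width and $N/X = Q/q$), obtaining a $t$-integral of $F_{q,a}(t) := \sum_n a(n) e(na/q) W(n/N) n^{-it}$ against the weight $\min(1,(Q/q)^2/(1+t^2))$. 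Summing over $a$ with $(a,q)=1$ and applying Lemma \ref{le:orthogonality:minor} pointwise in $t$ to the sequence $n \mapsto a(n) W(n/N) n^{-it}$ replaces $\sum_{(a,q)=1}|F_{q,a}(t)|^2$ by the character average $\varphi(q)^{-1}\sum_{\chi \bmod q}|G_\chi(t)|^2$, where $G_\chi(t) := \sum_n a(n) c_\chi(n) W(n/N) n^{-it}$. Collecting over $q \le Q$ yields
\[
\int_{\mathfrak{M}_{Q,N}} |S|^2 \ll \frac{1}{N} \sum_{q \le Q} \frac{1}{\varphi(q)} \sum_{\chi \bmod q} \int_\mathbb{R} |G_\chi(t)|^2 \min\!\Bigl(1, \frac{(Q/q)^2}{1+t^2}\Bigr) dt.
\]

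The last step is to split the $t$-integral at $|t| = AQ/q$. On the range $|t| \le AQ/q$, where $\min(1,(Q/q)^2/(1+t^2)) \le 1$, one recovers exactly the first term in the claimed bound. The remaining piece, which I expect to be the main obstacle, is the tail
\[
\frac{1}{N} \sum_{q \le Q} \frac{1}{\varphi(q)} \sum_\chi \int_{|t|>AQ/q} |G_\chi(t)|^2 \frac{(Q/q)^2}{1+t^2}\, dt,
\]
which must be bounded by $\frac{N}{A^2} \|W\|_{2,0}^2$. The factor $1/A^2$ comes immediately from the pointwise inequality $(Q/q)^2/(1+t^2) \le 1/A^2$ valid on the tail. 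To handle the residual $t$-integration, I would revert Lemma \ref{le:orthogonality:minor} to express the character average back as $\sum_{(a,q)=1}|F_{q,a}(t)|^2$, and use Parseval modulo $q$ together with Cauchy--Schwarz inside residue classes of length $\ll N/q$ to obtain $\sum_{(a,q)=1}|F_{q,a}(t)|^2 \ll N^2 \|W\|_{2,0}^2$ uniformly in $t$. The delicate part, where I expect to spend the most effort, is combining this uniform bound with the $O(q/(AQ))$ mass of $1/(1+t^2)$ on the tail and summing over $q \le Q$ without losing additional powers of $Q$ or $A$, so that the final error stays at $\frac{N}{A^2}\|W\|_{2,0}^2$ as required.
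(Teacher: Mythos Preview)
Your reduction to the $t$-integral with weight $\min(1,(Q/q)^2/(1+t^2))$ is correct and matches the paper up to the order in which Lemmas~\ref{le:orthogonality:major} and~\ref{le:orthogonality:minor} are applied. The gap is in your treatment of the tail $|t|>AQ/q$.

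If you revert to the $(a,q)$-sum and use the pointwise bound $\sum_{(a,q)=1}|F_{q,a}(t)|^2 \ll N^{2}\|W\|_{2,0}^{2}$ together with $\int_{|t|>AQ/q}(Q/q)^{2}(1+t^{2})^{-1}\,dt \ll Q/(qA)$, the resulting error is
\[
\frac{1}{N}\sum_{q\le Q} N^{2}\|W\|_{2,0}^{2}\cdot\frac{Q}{qA}\;\asymp\;\frac{NQ\log Q}{A}\,\|W\|_{2,0}^{2},
\]
which is larger than the target $N/A^{2}\|W\|_{2,0}^{2}$ by a factor of $A\,Q\log Q$; since $Q$ may be as large as $N^{1/10}$, this is fatal. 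Pulling out $1/A^{2}$ pointwise instead leaves you with $\int_{\text{tail}}|G_\chi(t)|^{2}\,dt$, which is infinite (Dirichlet polynomials do not decay), so the two options you mention cannot be combined to save the argument. The underlying problem is that a bound for fixed $q$ ignores the near-orthogonality among the different moduli.

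The paper's proof closes this gap by invoking Lemma~\ref{le:orthogonality2}, a large-sieve estimate that bounds
\[
\sum_{q\le Q}\frac{1}{\varphi(q)}\sum_{\chi\bmod q}\int_{|t|\le T}\Bigl|\sum_n a(n)c_\chi(n)n^{-it}W(n/N)\Bigr|^{2}dt\;\ll\;N\sum_n|a(n)W(n/N)|^{2}
\]
whenever $Q^{2}T\le N^{1-1/100}$. The tail is cut into dyadic ranges $|t|\asymp 2^{k}AQ/q$, on each of which the weight contributes $\ll A^{-2}2^{-2k}$; summing Lemma~\ref{le:orthogonality2} over $k$ then gives exactly $N/A^{2}\cdot\|W\|_{2,0}^{2}$. (For very large $k$, where $Q^{2}T$ exceeds $N^{1-1/100}$, the standard mean-value theorem for Dirichlet polynomials is used instead, and the hypothesis $Q\le N^{1/10}$ keeps this piece negligible.) Your outline is missing precisely this joint orthogonality over $(q,\chi,t)$.
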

\begin{proof}
  By the definition of the major arcs the integral is
  $$
  \leq \sum_{q \leq Q} \sum_{(a,q) = 1} \int_{|\theta| \leq Q / (q N)} \Big | \sum_{n} a(n) e \Bigl ( \frac{n a}{q} \Bigr ) e(n \theta) W \Bigl ( \frac{n}{N} \Bigr ) \Big |^{2} d \theta.
  $$
  By Lemma~\ref{le:orthogonality:minor} this is
  $$
  \sum_{q \leq Q} \frac{1}{\varphi(q)} \sum_{\chi \pmod{q}} \int_{|\theta| \leq Q / (q N)} \Big | \sum_{n} a(n) c_{\chi}(n) e(n \theta) W \Bigl ( \frac{n}{N} \Bigr ) \Big |^{2} d \theta.
  $$
  By Lemma~\ref{le:orthogonality:major} this is then
  $$
  \ll \frac{1}{N} \sum_{q \leq Q} \frac{1}{\varphi(q)} \sum_{\chi \pmod{q}} \int_{\mathbb{R}} \Big | \sum_{n} a(n) c_{\chi}(n) n^{-it}W \Bigl ( \frac{n}{N} \Bigr ) \Big |^{2} \min \Bigl (1 , \frac{(Q / q)^{2}}{1 + |t|^{2}} \Bigr ) d t.
  $$
  We now split the integral into a part with $|t| \leq A Q / q$ and the remaining part with $|t| > A Q / q$.
  It suffices to bound the latter part. To do so we split the integral into dyadic intervals. The contribution of each dyadic piece is bounded by
  $$
  \sum_{k \geq 0} \frac{1}{N} \sum_{q \leq Q} \frac{1}{\varphi(q)} \sum_{\chi \pmod{q}} \frac{1}{A^{2} 2^{2k}} \int_{|t| \leq 2^{k} Q} \Big | \sum_{n} a(n) c_{\chi}(n) n^{-it} W \Bigl ( \frac{n}{N} \Bigr ) \Big |^{2} dt.
  $$
  For $2^{k} Q^{3} \leq N^{4/5}$ we can use Lemma~\ref{le:orthogonality2} to see that to see that the contribution of $k$ with $2^{k} \leq N^{4/5} Q^{-3}$ is
  $$
  \ll \frac{1}{A^{2} N} \sum_{k \geq 0} 2^{-2k} \Bigl ( N \sum_{n} \Big | W \Big ( \frac{n}{N} \Big ) \Big |^{2} \Bigr ) \ll \frac{N}{A^{2}} \| W \|_{2,0}^2,
  $$
  as needed. On the other hand for $2^{k} Q^{3} > N^{4/5}$ we use the fact that
  $$
  \int_{|t| \leq 2^{k} Q} \Big | \sum_{n} a(n) W \Bigl ( \frac{n}{N} \Bigr ) \Big |^{2} dt \ll (2^{k} Q + N) N \| W \|_{2,0}^2,
  $$
  using the standard mean-value theorem for Dirichlet polynomials. Therefore
  the contribution of the $k$ with $2^{k} > N^{4/5} Q^{-3} \geq N^{1/2}$ is bounded by
  $$
  \frac{1}{N} \sum_{2^{k} \geq N^{1/2}} 2^{-2k} (2^{k} Q +  N) N Q \| W \|_{2,0}^2 \ll N^{5/6} \| W \|_{2,0}^2,
  $$
  which is negligible.

\end{proof}

\section{Mean-value theorems}
In this section we collect a number of mean-value theorems.
\begin{lemma} \label{le:granville}
  Let $a(n)$ be a sequence of complex number and $T \geq 1$ be given. Then,
  $$
  \int_{-T}^{T}  \Big | \sum_{T^{2} \leq n \leq x} \frac{a(n) \Lambda(n)}{n^{1 + it}} \Big |^{2} dt \ll \sum_{T^{2} \leq n \leq x} \frac{|a(n)|^{2} \Lambda(n)}{n}.
  $$
\end{lemma}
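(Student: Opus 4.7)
The plan is to prove this by a smoothed Montgomery--Vaughan estimate, where the smoothing restricts the off-diagonal contribution to pairs of prime powers in short intervals, for which the Brun--Titchmarsh inequality provides precisely the saving needed to turn the natural $\Lambda(n)^2$ weight coming out of the mean-value theorem into $\Lambda(n)$. Throughout, set $b_n = a(n)\Lambda(n)/n$ and $D(t) = \sum_{T^2 \leq n \leq x} b_n n^{-it}$.

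First, I would replace $\mathbf{1}_{[-T,T]}$ by a smooth non-negative majorant $\phi \geq \mathbf{1}_{[-T,T]}$ with $\hat\phi$ compactly supported in $[-1/T, 1/T]$ and $\|\hat\phi\|_\infty \ll T$ (a rescaled Beurling--Selberg majorant serves), so that after Parseval
\[
\int_{-T}^T |D(t)|^2\,dt \leq \int |D(t)|^2 \phi(t)\,dt = \sum_{n,m} b_n \overline{b_m}\, \hat\phi(\log(m/n)).
\]
The diagonal contribution $\hat\phi(0) \sum_n |b_n|^2 \ll T \sum_n |a(n)|^2 \Lambda(n)^2/n^2$ is handled using the hypothesis $n \geq T^2$, which gives $T \log(n)/n \leq 2\log(T)/T \leq 1$ (for $T$ above an absolute constant, smaller $T$ being absorbed into the implicit constant), and hence $T \Lambda(n)^2/n^2 \leq \Lambda(n)/n$. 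This yields a diagonal bound of $\ll \sum_n |a(n)|^2 \Lambda(n)/n$.

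For the off-diagonal, only pairs with $m$ in the short interval $[n(1-1/T), n(1+1/T)]$ contribute. Since $n \geq T^2$ the interval has length $2n/T \geq 2\sqrt{n}$, so the Brun--Titchmarsh inequality applies; combined with $\log(n/T) \geq \log(n)/2$ (valid for $n \geq T^2$), the number of primes in this range is $\ll n/(T \log n)$, and higher prime powers $p^k$ with $k \geq 2$ contribute negligibly (their count is $O(\sqrt{n}/T)$, dominated by the prime count). Using $\|\hat\phi\|_\infty \ll T$ together with AM-GM in the form $|b_n \bar b_m| \leq \tfrac{1}{2}(|b_n|^2 + |b_m|^2)$, the off-diagonal is bounded by
\[
\ll T \sum_n |b_n|^2 \cdot \frac{n}{T \log n} = \sum_n |a(n)|^2 \frac{\Lambda(n)^2}{n \log n} \leq \sum_n |a(n)|^2 \frac{\Lambda(n)}{n},
\]
using $\Lambda(n) \leq \log n$ in the last step.

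The main technical point is the matching of scales in the Brun--Titchmarsh step: the hypothesis $n \geq T^2$ is precisely what simultaneously guarantees $T \log(n)/n \leq 1$ (needed for the diagonal) and a short-interval length exceeding $\sqrt{n}$ with denominator $\log(n/T) \asymp \log n$ (needed for off-diagonal), and these two conditions together upgrade the $\Lambda(n)^2$ appearing naturally from Montgomery--Vaughan to the cleaner $\Lambda(n)$ on the right-hand side.
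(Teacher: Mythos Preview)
Your argument is correct. The paper itself gives no proof here: it simply cites the result as Lemma~2.6 of an external reference. The route you take---a nonnegative band-limited majorant of $\mathbf{1}_{[-T,T]}$ so that the off-diagonal localizes to prime powers in intervals of length $\asymp n/T$, then Brun--Titchmarsh to bound that count by $\ll n/(T\log n)$, with the hypothesis $n\ge T^2$ doing double duty (guaranteeing both $T\log n \ll n$ for the diagonal and interval length $\gg\sqrt n$ with $\log(n/T)\asymp\log n$ for Brun--Titchmarsh)---is the standard proof of this prime-supported mean-value estimate, and is essentially how the cited source argues as well. You have supplied the details the paper chose to defer; nothing is missing.
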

\begin{proof}
  This is \cite[Lemma 2.6]{NewPretProfGranvillec}.
\end{proof}

\begin{lemma} \label{le:orthogonality2}
  Let $a(n)$ be a sequence of complex coefficients. Let $N, Q, T > 0$ be such that $Q^{2} T \leq N^{1 - 1/100}$. Then,
  $$
  \sum_{q \leq Q} \frac{1}{\varphi(q)} \sum_{\chi \pmod{q}} \int_{|t| \leq T} \Big | \sum_{n \leq N} a(n)c_{\chi}(n) n^{-it} \Big |^{2} dt \ll N \sum_{n \leq N}|a(n)|^{2}.
  $$
\end{lemma}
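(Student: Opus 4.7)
The plan is to convert $c_\chi$ to additive characters via Lemma \ref{le:orthogonality:minor}, then use Gallagher's lemma to trade the integral in $t$ for a short-interval average in $n$, and finish with the classical large sieve applied to Farey fractions.

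First, pointwise in $t$, Lemma \ref{le:orthogonality:minor} turns the left-hand side into
\[
\int_{|t|\leq T}\sum_{q\leq Q}\sum_{(a,q)=1}\Bigl|\sum_{n\leq N}a(n)e(na/q)n^{-it}\Bigr|^{2}\,dt.
\]
For each fixed pair $(a,q)$, Gallagher's lemma (\cite[Lemma 1]{Gallagher}) applied with frequencies $\nu_{n}=-\log n/(2\pi)$ yields the Dirichlet-polynomial version
\[
\int_{|t|\leq T}\Bigl|\sum_{n}a(n)e(na/q)n^{-it}\Bigr|^{2}\,dt\ \ll\ T^{2}\int_{0}^{\infty}\Bigl|\sum_{n\in I_{x}}a(n)e(na/q)\Bigr|^{2}\frac{dx}{x},
\]
where $I_{x}=[x,\,x(1+1/T)]$ is a short interval of length $\ll x/T$.

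Summing over $(a,q)$ and applying the classical large sieve (for the Farey fractions of denominator $\leq Q$, which are $Q^{-2}$-spaced) gives, for each $x$,
\[
\sum_{q\leq Q}\sum_{(a,q)=1}\Bigl|\sum_{n\in I_{x}}a(n)e(na/q)\Bigr|^{2}\leq (|I_{x}|+Q^{2})\sum_{n\in I_{x}}|a(n)|^{2}\ll \Bigl(\frac{x}{T}+Q^{2}\Bigr)\sum_{n\in I_{x}}|a(n)|^{2}.
\]
Swapping the $x$-integration with the sum over $n$, each $n\leq N$ contributes only over a logarithmic $x$-window of length $O(1/T)$, i.e.\ of ordinary length $\ll n/T$, so its total contribution is $\ll (n/T+Q^{2})/T$. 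Multiplying by the outer $T^{2}$ yields a total bound of
\[
(N+Q^{2}T)\sum_{n\leq N}|a(n)|^{2}\ \ll\ N\sum_{n\leq N}|a(n)|^{2},
\]
where the last step uses the hypothesis $Q^{2}T\leq N^{1-1/100}$.

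No serious obstacle is anticipated: once one recognises Gallagher's lemma as the correct device to trade the $t$-integral for a short-interval sum amenable to the large sieve, the argument is essentially routine. The only mild care required is to verify the Dirichlet-polynomial version of Gallagher's lemma via the change of variables $x\mapsto e^{-2\pi\nu}$, which is standard, and to ensure the $x$-support of a fixed $n$ in the swapped integral really is of length $\ll n/T$.
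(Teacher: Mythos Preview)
Your argument is correct and essentially reproduces Gallagher's original proof of the hybrid large sieve, transported to the $c_\chi$-weighted setting via Lemma~\ref{le:orthogonality:minor}. The paper proceeds differently: it dualises, inserts a smooth majorant $\Phi(n/N)$ on the $n$-sum, opens the square, and applies Poisson summation modulo $q_1 q_2$ to the resulting correlation
\[
\sum_n \frac{c_{\chi_1}(n)}{\sqrt{\varphi(q_1)}}\cdot\frac{\overline{c_{\chi_2}(n)}}{\sqrt{\varphi(q_2)}}\,n^{it-iu}\,\Phi\Bigl(\frac{n}{N}\Bigr).
\]
Stationary phase (integration by parts) kills all nonzero frequencies $\ell\neq 0$ under the hypothesis $Q^2T\leq N^{1-1/100}$, and the $\ell=0$ term vanishes unless $\chi_1=\chi_2$, yielding exact diagonalisation. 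Your route has the advantage of being entirely elementary and avoiding any smoothing; it also makes transparent that the result is simply the hybrid large sieve in disguise. The paper's duality/Poisson approach, on the other hand, identifies precisely where the condition $Q^2T\ll N$ enters (namely, in suppressing the dual frequencies) and generalises more readily to the sieve-weighted Hal\'asz--Montgomery variant needed later in Lemma~\ref{le:orthogonality4}, where one must insert a sieve on the $n$-side and track the diagonal more delicately. Either proof is acceptable here; the only caveat for yours is the implicit assumption $T\geq 1$ when asserting $|I_x|\asymp x/T$, but the case $T<1$ follows trivially from the $T=1$ bound.
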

\begin{proof}
  By duality it suffices to show that for integrable $\beta(\chi, t)$,
  $$
  \sum_{n} \Big | \sum_{q \leq Q} \sum_{\chi \pmod{q}} \int_{|t| \leq T} \beta(\chi, t) \frac{c_{\chi}(n)}{\sqrt{\varphi(q)}} n^{-it} dt \Big |^{2} \ll N \sum_{q \leq Q} \sum_{\chi \pmod{q}} \int_{|t| \leq T} |\beta(\chi, t)|^{2} \,d t .
  $$
  We put a smooth weight $\Phi(n / N)$ with $\Phi \geq 1$ for $n \leq N$ and $\Phi \geq 0$ on the sum over $n$. Opening the square we need to understand
  $$
  \sum_{n} \Phi \Bigl ( \frac{n}{N} \Bigr ) c_{\chi}(n) \overline{c_{\psi}(n)} n^{it - iu}
  $$
  for characters $\chi \pmod{q}$, $\psi \pmod{r}$ and $|t|, |u| \leq T$. Applying Poisson summation we see that the above is
  $$
  \frac{1}{q r} \sum_{\ell} \Bigl ( \sum_{x \pmod{q r}} c_{\chi}(x) \overline{c_{\psi}(x)} e \Bigl ( \frac{x \ell}{q r} \Bigr )  \Bigr ) \int_{\mathbb{R}} x^{it - iu} \Phi \Bigl ( \frac{x}{N} \Bigr ) e \Bigl ( - \frac{x \ell}{q r} \Bigr ) \,d x .
  $$
  Integrating by parts we see that the sum over $\ell$ is constrained to $\ell \ll N^{\varepsilon} \cdot q r \cdot (1 + |t - u|) / N$ up to an arbitrary power-saving error, and in particular,
  all the terms except for $\ell = 0$ are negligible. On the other hand at the central term we find
  $$
  \frac{N^{1 + it - i u}}{q r} \cdot \widetilde{\Phi}(1 + i (t - u)) \sum_{x \pmod{q r}} c_{\chi}(x) \overline{c_{\psi}}(x).
  $$
  Moreover, the sum over $x$ vanishes unless $\chi = \psi$, while in the case $\chi = \psi$ it is equal to $q^{2} \varphi(q)$ and the claim follows.
\end{proof}

The following is the classical hybrid large sieve.

\begin{lemma} \label{le:orthogonality3}
  Let $a(n)$ be a sequence of complex numbers. Then,
  $$  \sum_{q \leq Q} \sum_{\substack{\psi \pmod{q} \\ \text{primitive}}} \int_{|t| \leq T} \Big | \sum_{n \leq N} a(n) \psi(n) n^{-it} \Big |^2 dt \ll (Q^2 T + N) \sum_{n \leq N} |a(n)|^2.
  $$
\end{lemma}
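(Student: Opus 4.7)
The plan is to reduce the statement to the Montgomery--Vaughan multiplicative large sieve for primitive characters, by first collapsing the $t$-integral into an integral of short $n$-sums through a Dirichlet polynomial form of Gallagher's lemma. This is the classical route, as in Montgomery's \emph{Topics in Multiplicative Number Theory}, Chapter~7.

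First I would invoke Gallagher's lemma in its Dirichlet polynomial form: for any complex sequence $c(n)$ supported in $[1,N]$ and any $T \geq 1$,
$$
\int_{-T}^{T} \Big|\sum_{n} c(n)\, n^{-it}\Big|^{2} dt \;\ll\; T^{2} \int_{0}^{\infty} \Big|\sum_{u < n \leq u(1 + 1/T)} c(n)\Big|^{2} \frac{du}{u}.
$$
This is just the exponential-sum version already invoked in Lemma~\ref{le:orthogonality:major}, applied after the substitution $n = e^{y}$ which turns $n^{-it}$ into $e^{-ity}$. I apply this inequality with $c(n) = a(n)\psi(n)\mathbf{1}_{n \leq N}$; the presence of $\psi$ is irrelevant at this step.

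Next, summing over $q \leq Q$ and primitive $\psi \pmod{q}$ and swapping with the integral in $u$, I invoke the multiplicative large sieve for primitive characters,
$$
\sum_{q \leq Q}\sum_{\substack{\psi \pmod{q} \\ \text{primitive}}} \Big|\sum_{M < n \leq M+H} b(n)\psi(n)\Big|^{2} \;\ll\; (H + Q^{2}) \sum_{M < n \leq M+H} |b(n)|^{2},
$$
with $M = u$, $H \asymp u/T$ and $b = a$. (The sharper Montgomery--Vaughan version carries an extra factor $q/\varphi(q) \geq 1$, so dropping it only weakens the bound.) This yields the pointwise-in-$u$ estimate $(u/T + Q^{2}) \sum_{u < n \leq u(1+1/T)} |a(n)|^{2}$ for the integrand. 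Swapping sum and integral once more, and noting that for each fixed $n \leq N$ the range of $u$ contributing to the inner sum has $u \asymp n$ and logarithmic length $\log(1 + 1/T) \asymp 1/T$, I obtain
$$
T^{2} \int_{0}^{\infty} (u/T + Q^{2}) \sum_{u < n \leq u(1+1/T)} |a(n)|^{2}\, \frac{du}{u} \;\ll\; \sum_{n \leq N} |a(n)|^{2}\,(n + Q^{2} T) \;\ll\; (N + Q^{2} T) \sum_{n \leq N} |a(n)|^{2},
$$
which is the desired inequality.

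There is no genuine obstacle: both Gallagher's lemma and the primitive-character large sieve are classical ingredients, and the bookkeeping in $u$ is routine. The only minor point is that the Dirichlet polynomial form of Gallagher's lemma is not literally the form cited in Lemma~\ref{le:orthogonality:major}, but it follows from the latter by the exponential change of variables indicated above.
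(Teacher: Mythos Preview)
Your argument is correct and is exactly Gallagher's original derivation of the hybrid large sieve. The paper does not supply a proof of this lemma at all: it simply introduces it with the sentence ``The following is the classical hybrid large sieve'' and moves on, treating it as a known black box. So there is nothing to compare against; you have written out the standard proof that the paper is implicitly citing.
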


From this we deduce the following result.

\begin{lemma} \label{le:fewbad}
  Let $a(n)$ be a sequence of complex numbers with $|a(n)| \leq 1$.
  Let $\mathcal{S}$ denote a collection of tuples $(t, \psi)$ with $\psi$ ranging over primitive characters of conductor $\leq Q$ and $t$ ranging over $\mathcal{T}_{\psi}$ a collection of well-spaced points in $[-T, T]$ which is allowed to depend on $\psi$. Let also $\mathcal{B} \subset \mathcal{S}$ denote the set of points $(t, \psi) \in \mathcal{S}$ at which
  $$
  \Big | \sum_{p \leq P} a(p) p^{-it} \psi(p) \Big | \geq \frac{P}{\Delta}.
  $$
  Then, $$
  |\mathcal{B}| \ll \log P \cdot k! \Delta^{2k},
  $$
  where $k$ is the smallest integer such that $P^{k} > Q^{2} T$.
\end{lemma}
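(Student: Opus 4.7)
The plan is to run the classical moment method for large values of the prime-supported polynomial $S(t,\psi) := \sum_{p \le P} a(p) p^{-it} \psi(p)$. Specifically, I would raise $S(t,\psi)$ to the $k$-th power so that $S(t,\psi)^{k}$ becomes a Dirichlet polynomial in $\psi(n) n^{-it}$ of length at most $P^{k}$, and then feed this into the hybrid large sieve (Lemma~\ref{le:orthogonality3}). The reason $k$ is chosen as the smallest integer with $P^{k} > Q^{2} T$ is precisely that this is the threshold at which the length $P^{k}$ dominates the $Q^{2} T$ term appearing in the large sieve.

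To execute this, first expand
$$
S(t,\psi)^{k} = \sum_{n} b(n) \psi(n) n^{-it}
$$
where $b(n) = \sum_{p_{1} \cdots p_{k} = n,\ p_{i} \le P} a(p_{1}) \cdots a(p_{k})$ is supported on $n \le P^{k}$. Since each integer $n$ has at most $k!$ ordered factorizations into $k$ primes and $|a| \le 1$, we have $|b(n)| \le k!$, and therefore
$$
\sum_{n} |b(n)|^{2} \le k! \sum_{n} |b(n)| \le k! \cdot \pi(P)^{k} \le k! \, P^{k}.
$$
Next, I would apply the hybrid large sieve in its well-spaced form (which follows from Lemma~\ref{le:orthogonality3} by thickening each $t \in \mathcal{T}_{\psi}$ to a unit-length interval and invoking the integral version) to the polynomial $S(t,\psi)^{k}$. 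Using $Q^{2} T < P^{k}$ by the choice of $k$, this yields
$$
\sum_{(t,\psi) \in \mathcal{S}} |S(t,\psi)|^{2k} \ll (Q^{2} T + P^{k}) \cdot k! \, P^{k} \ll k! \, P^{2k}.
$$
On the other hand, every $(t,\psi) \in \mathcal{B}$ contributes at least $(P/\Delta)^{2k}$ to the left-hand side, so $|\mathcal{B}| \cdot (P/\Delta)^{2k} \ll k! \, P^{2k}$, which gives $|\mathcal{B}| \ll k! \, \Delta^{2k}$. This is in fact slightly stronger than the claimed bound, so the $\log P$ in the statement is harmless slack (indeed, retaining $\pi(P) \ll P/\log P$ would give a bound saving a factor of $(\log P)^{k}$).

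I do not expect any step here to present a genuine obstacle: the combinatorial bound on $b(n)$ is elementary, and the choice of $k$ is exactly designed to make the $Q^{2} T + N$ factor in Lemma~\ref{le:orthogonality3} collapse to $P^{k}$. The only mildly fiddly point is translating the integral version of the hybrid large sieve in Lemma~\ref{le:orthogonality3} into the discrete well-spaced statement needed here, but this is entirely standard and introduces at worst a constant factor.
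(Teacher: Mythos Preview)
Your approach is essentially the paper's: raise the prime polynomial to the $k$-th power, bound the $\ell^2$ norm of the resulting coefficients by $k!\,P^k$, feed the result into the hybrid large sieve, and use the choice of $k$ to collapse $Q^2T+P^k$ to $2P^k$.

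One small correction: the step you describe as ``thickening each $t$ to a unit-length interval'' does \emph{not} cost only a constant. Passing from the integral form of Lemma~\ref{le:orthogonality3} to the discrete well-spaced form for a Dirichlet polynomial of length $N$ costs a factor $\log N$ (via the Sobolev--Gallagher lemma, or equivalently via subharmonicity on a disk of radius $1/\log N$). In the paper this is done by writing
\[
|D(it)|^{2k}\le (\log P)^2\iint_{|\xi|,|\zeta|\le 1/\log P}|D(\xi+it+i\zeta)|^{2k}\,d\xi\,d\zeta,
\]
summing over the well-spaced $t_r$, and then integrating out $\xi$, which leaves a single factor $\log P$. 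So the $\log P$ in the statement is not slack---it is exactly the price of discretization, and your bound should read $|\mathcal{B}|\ll \log P\cdot k!\,\Delta^{2k}$ rather than $k!\,\Delta^{2k}$. With that adjustment your proof is correct and matches the paper's.
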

\begin{proof}
  In order to prove the lemma pick the smallest integer $k \geq 1$ such that $P^{k} > Q^{2} T$. Then the cardinality of the set $\mathcal{B}$ is
  $$
  \leq \Bigl ( \frac{\Delta}{P} \Bigr )^{2k} \sum_{(t, \psi) \in \mathcal{B}} \Big | \sum_{p \leq P} a(p) p^{-it} \psi(p) \Big |^{2k}.
  $$
  We now relate this to a continuous integral by using subharmonicity: for every Dirichlet polynomial $D(s)$ of length $N$ we have
  $$
  |D(it)|^{2k} \leq \log^{2} P \int_{|\xi|, |\zeta| \leq 1 / \log P} |D(\xi + it + i \zeta)|^{2k} d\xi d \zeta.
  $$
  Thus, it suffices to bound
  $$
  \log P \int_{|\xi| < 1 / \log P}  \Bigl ( \frac{\Delta}{P} \Bigr )^{2k} \sum_{q \leq Q} \sum_{\chi \pmod{q}} \int_{|t| \leq 2T} \Big | \sum_{p \leq P} a(p) p^{-\xi -it} \psi(p) \Big |^{2k} dt.
  $$
  According to the previous lemma this is
  $$
  \ll \log P \cdot \Bigl ( \frac{\Delta}{P} \Bigr )^{2k} \cdot (Q^{2} T + P^{k}) k! P^{k}.
  $$
  Since $P^{k} > Q^{2} T$ this is
  $$
  \ll \log P \cdot k! \Delta^{2k}.
  $$
\end{proof}

Finally we need the following hybrid version of Halasz-Montgomery mean-value theorem.

\begin{lemma} \label{le:orthogonality4}
 Let $\mathcal{S}$ denote a set of tuples $(t, \psi_{i})$ with $\psi_{i}$ distinct primitive characters of conductor $\leq Q$ and $t \in \mathcal{T}_{\psi_i} \subset [-T, T]$ with $\mathcal{T}_{\psi}$ a set of well-spaced points (i.e $|t_i - t_j| \geq \tfrac 12$ for all $t_i, t_j \in \mathcal{T}_\psi$ with $i \neq j$). Assume that the $Q^{2} T \leq \sqrt{P}$ and $Q \leq P^{1/8}$. Then,
  $$
  \sum_{(t, \psi) \in \mathcal{S}} \Big | \sum_{P / 10 \leq p \leq 10 P} a(p) p^{-it} \psi(p) \Big |^{2} \ll \frac{1}{\log P} \sum_{P / 10 \leq p \leq 10 P} |a(p)|^{2}.
  $$
\end{lemma}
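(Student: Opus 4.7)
The plan is a standard Hal\'asz--Montgomery duality combined with the hybrid large sieve from Lemma~\ref{le:orthogonality3}. Regard each $r = (t_r, \psi_r) \in \mathcal{S}$ as producing a vector $v_r \in \ell^2(\{p : P/10 \leq p \leq 10 P\})$ given by $v_r(p) := \psi_r(p)\, p^{-it_r}$; then the left-hand side of the lemma is $\sum_{r \in \mathcal{S}} |\langle a, \overline{v_r}\rangle|^2$. The basic Hilbert-space (``triangle in $\ell^\infty$'') inequality
\[
\sum_{r \in \mathcal{S}} |\langle a, \overline{v_r}\rangle|^2 \leq \|a\|_2^2 \cdot \max_{r \in \mathcal{S}} \sum_{r' \in \mathcal{S}} |\langle v_r, v_{r'}\rangle|
\]
reduces everything to estimating $\max_r \sum_{r'} |\langle v_r, v_{r'}\rangle|$, where
\[
\langle v_r, v_{r'}\rangle = \sum_{P/10 \leq p \leq 10P} (\psi_r \overline{\psi_{r'}})(p)\, p^{i(t_{r'}-t_r)}.
\]

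The diagonal $r = r'$ gives $\pi(10P) - \pi(P/10) \asymp P/\log P$, so one expects the off-diagonals to be negligible by comparison. For $r' \neq r$, I would write $\chi := \psi_r \overline{\psi_{r'}}$, induced by a primitive character of conductor at most $q_r q_{r'} \leq Q^{2} \leq P^{1/4}$, and $\tau := t_{r'} - t_r \in [-2T, 2T]$. The constraint $Q^{2}T \leq \sqrt{P}$ places $q(1+|\tau|) \leq 2\sqrt{P}$ well inside the Vinogradov--Korobov zero-free region of $L(s, \chi)$, so that classical PNT for $L$-functions yields the pointwise estimate $|\langle v_r, v_{r'}\rangle| \ll P \exp(-c \sqrt{\log P})$.

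To sum these off-diagonal contributions efficiently over $r' \in \mathcal{S}$, I would apply Cauchy--Schwarz in $r'$ and bound the resulting $\ell^{2}$ sum $\sum_{r'} |\langle v_r, v_{r'}\rangle|^{2}$ via Lemma~\ref{le:orthogonality3} applied to the sequence $(\psi_r(p) p^{-it_r})_p$, after converting the integral over $t \in [-T, T]$ to the sum over the well-spaced $\mathcal{T}_{\psi}$ by a Montgomery--Vaughan / Sobolev-embedding argument (the loss is absolute since $|t_i - t_j| \geq 1/2$). The main technical hurdle is that $|\mathcal{S}|$ may be as large as $\sim Q^{2} T \sim \sqrt{P}$, which dwarfs any $\exp(c\sqrt{\log P})$ saving from the pointwise PNT bound alone; what saves the argument is the trade-off between the hybrid large sieve $\ell^{2}$-bound and the $|\mathcal{S}|^{1/2}$ factor from Cauchy--Schwarz. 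The precise hypotheses $Q \leq P^{1/8}$ and $Q^{2}T \leq \sqrt{P}$ are calibrated so that the sieve's main term $(Q^{2})^{2}T + P \ll P$, and the off-diagonal contribution ends up strictly subordinate to the diagonal $P/\log P$.
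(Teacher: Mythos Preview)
Your approach has a genuine gap in the off-diagonal treatment. The pointwise estimate $|\langle v_r, v_{r'}\rangle| \ll P\exp(-c\sqrt{\log P})$ is incorrect when $\psi_r = \psi_{r'}$ but $t_r \neq t_{r'}$: here $\chi$ is principal, and the prime number theorem leaves a \emph{main term} of size $\asymp \frac{P}{(1+|\tau|)\log P}$ coming from the pole of $\zeta$, not merely an error term. Summing this over the $\asymp T$ well-spaced points of $\mathcal{T}_{\psi_r}$ already gives $\asymp \frac{P\log T}{\log P}$, and since $T$ may be as large as $\sqrt{P}$ this exceeds the diagonal $P/\log P$ by a factor $\log T$. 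Your proposed Cauchy--Schwarz combined with Lemma~\ref{le:orthogonality3} does not rescue this either: it yields at best $|\mathcal{S}|^{1/2}\bigl(P^{2}/\log P\bigr)^{1/2} \asymp P^{5/4}/\sqrt{\log P}$, which is far worse. Even for the genuinely cross-character terms, $|\mathcal{S}|\cdot P\exp(-c\sqrt{\log P}) \asymp P^{3/2}\exp(-c\sqrt{\log P})$ is still too large. In short, neither pointwise PNT nor the large-sieve $\ell^2$ bound is sharp enough to beat the trivial bound by a full logarithm, which is exactly what the lemma demands.

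The paper's argument avoids PNT entirely. After the same duality step, it majorizes the sharp prime sum by a sum over \emph{all} integers, weighted by a smooth $\Phi(n/P)$ and an upper-bound sieve $\sum_{d\mid n,\,d\le z}\lambda_d$ with $z=P^{1/4}$. Poisson summation to modulus $q_0 q_1$ then annihilates the cross-character terms outright under the hypothesis $Q^2 T \le \sqrt{P}$; the smoothness of $\Phi$ makes $\widetilde{\Phi}(1+i(t_1-t_0))$ decay rapidly, so together with the well-spacing this controls the same-character off-diagonal cleanly; and the sieve weight supplies the crucial saving via $\sum_{(d,q_\psi)=1}\lambda_d/d \ll 1/\log z \asymp 1/\log P$. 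The hypothesis $Q\le P^{1/8}$ is what makes this last estimate uniform over conductors $q_\psi \le Q$.
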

\begin{proof}
  By duality, it suffices to show that
  $$
  \sum_{P / 10 \leq p \leq 10 P} \Big | \sum_{(t, \psi) \in \mathcal{S}} \beta(t, \psi) p^{-it} \psi(p) \Big |^{2} \ll \frac{1}{\log P} \sum_{(t, \psi) \in \mathcal{S}} |\beta(t, \psi)|^{2}.
  $$
  To control for the fact that we have to deal with a sum over primes we introduce a sieve weight and add a smooth function $\Phi(n / P)$. Thus it suffices to bound
  $$
  \sum_{n} \Big | \sum_{(t, \psi) \in \mathcal{S}} \beta(t, \psi) n^{-it} \psi(n) \Big |^{2} \Phi \Bigl ( \frac{n}{P} \Bigr )\sum_{\substack{d  | n \\ d \leq z}} \lambda_{d}
  $$
  with $\Phi(x)\geq 1$ for $1/10 \leq x \leq 10$ and $\Phi(x) \geq 0$ everywhere else (and $\Phi$ compactly supported in $(0,\infty)$).
  Opening the square we get
  $$
  \sum_{\substack{(t_{0}, \psi_{0}) \in \mathcal{S}  \\ (t_{1}, \psi_{1}) \in \mathcal{S}}} \overline{\beta(t_{0}, \psi_{0})} \beta(t_{1}, \psi_{1}) \sum_{d \leq z} \lambda_{d} \cdot d^{it_{1} - it_{0}} \psi_{1}(d) \overline{\psi_{0}(d)} \sum_{n} n^{i t_{1} - i t_{0}} \psi_{1}(n) \overline{\psi_{0}(n)} \Phi \Bigl ( \frac{n d}{P} \Bigr ).
  $$
  We now execute the sum over $n$. Assuming that $\psi_{1}$ is of conductor $q_{1}$ and $\psi_{0}$ is of conductor $q_{0}$, we get
  $$
  \frac{1}{q_{0} q_{1}} \sum_{\ell} \Bigl ( \sum_{x \pmod{q_{0} q_{1}}} \psi_{1}(x) \overline{\psi_{0}(x)} e \Bigl ( \frac{x \ell}{q_{0} q_{1}} \Bigr )  \Bigr ) \int_{\mathbb{R}} x^{i t_{1} - i t_{0}} e \Bigl ( \frac{x \ell}{q_{1} q_{0}} \Bigr )\Phi \Bigl ( \frac{d x}{P} \Bigr ) \,d x .
  $$
  Integrating by parts we see that only the term $\ell = 0$ survive as long as $d q_{1} q_{0} \cdot (1 + |t_{0} - t_{1}|) < P^{3/4}$. Since we can certainly pick $z = P^{1/4}$, this will hold provided that $Q^{2} T \ll \sqrt{P}$.
  If the central term $\ell = 0$ is non-zero then $\psi_{1} = \psi_{0}$, and $q_{1} = q = q_{0}$. In that case the central term is equal to,
  $$
  \frac{\varphi(q)}{q} \int_{\mathbb{R}} x^{it_{1} - i t_{0}} \Phi \Bigl ( \frac{d x}{P} \Bigr ) dx = \frac{\varphi(q)}{q} \Bigl ( \frac{P}{d} \Bigr )^{1 - it_{0} + it_{1}} \cdot \widetilde{\Phi}(1 + i(t_{1} - t_{0})).
  $$
  Thus our main term is equal to
  $$
  \sum_{\substack{(t_{0}, \psi) \in \mathcal{S} \\ (t_{1}, \psi) \in \mathcal{S}}} \beta(t_{1}, \psi) \overline{\beta(t_{0}, \psi)} \cdot \widetilde{\Phi}(1 + i (t_{1} - t_{0})) \frac{\varphi(q_{\psi})}{q_{\psi}}\sum_{(d, q_{\psi}) = 1} \frac{\lambda_{d}}{d},
  $$
  where $q_{\psi}$ denotes the conductor of $\psi$. We notice that with a standard choice of sieve weight we have
  $$
  \frac{\varphi(q_{\psi})}{q_{\psi}} \sum_{(d, q_{\psi}) = 1} \frac{\lambda_{d}}{d} \ll \frac{1}{\log z} \asymp \frac{1}{\log P}
  $$
  as long as the conductors $q_{\psi} \leq \sqrt{z} = P^{1/8}$. We then get the result using the fact that the points $t_{i}$ are well-spaced and thus
  $$
  \sum_{\substack{(t_{0}, \psi) \in \mathcal{S} \\ (t_{1}, \psi) \in \mathcal{S}}} |\beta(t_{0}, \psi) \beta(t_{1}, \psi) \widetilde{\Phi}(1 + i (t_{1} - t_{0}))| \ll \sum_{(t, \psi) \in \mathcal{S}} |\beta(t, \psi)|^{2},
  $$
  as needed.
\end{proof}

\section{Bounds for Mellin transforms}

At various places we will need the following standard bound for the decay of the Mellin transform of a smooth function $W$.

\begin{lemma} \label{le:mellin_decay}
  Let $W : \mathbb{R} \rightarrow \mathbb{C}$ be a smooth function, compactly supported in $(0, \infty)$. Then,
  for any $a < b$, uniformly in $a < \Re s < b$, and $r \geq 1$,
  $$
  \widetilde{W}(s) := \int_{0}^{\infty} W(x) x^{s - 1} dx \ll_{a,b, r} \frac{\| W \|_{1, r}}{1 + |\Im s|^{r}}
  $$
\end{lemma}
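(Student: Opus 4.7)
The plan is to obtain the decay via iterated integration by parts in the Mellin integral. Starting from
$$
\widetilde{W}(s) = \int_0^\infty W(x)\, x^{s-1}\, dx
$$
and using the identity $x^{s-1} = \frac{d}{dx}(x^s/s)$, one application of integration by parts transfers a derivative onto $W$ at the cost of a factor $-1/s$; the boundary contributions at $0$ and $\infty$ vanish because $W$ is smooth and compactly supported in $(0,\infty)$. Iterating this step $r$ times gives the closed-form identity
$$
\widetilde{W}(s) \;=\; \frac{(-1)^r}{s(s+1)\cdots(s+r-1)} \int_0^\infty W^{(r)}(x)\, x^{s+r-1}\, dx.
$$

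The next step is to bound each factor. Since $|s + j| \geq |\Im s|$ for every integer $j \geq 0$, the product in the denominator is at least $|\Im s|^r$. For $s$ in the strip $a < \Re s < b$ and $x$ ranging over the compact support of $W^{(r)} \subset \mathrm{supp}(W) \subset (0,\infty)$, the factor $x^{\Re s + r - 1}$ is controlled uniformly by a constant depending only on $a$, $b$, $r$, and the support of $W$. The remaining integral is therefore at most a constant multiple of $\int_0^\infty |W^{(r)}(x)|\, dx \leq \|W\|_{1,r}$, producing the bound $|\widetilde{W}(s)| \ll_{a,b,r} \|W\|_{1,r} / |\Im s|^r$ whenever $|\Im s| \geq 1$.

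In the complementary range $|\Im s| \leq 1$, integration by parts is unnecessary: the trivial estimate
$$
|\widetilde{W}(s)| \leq \int_0^\infty |W(x)|\, x^{\Re s - 1}\, dx \ll_{a,b} \|W\|_{1,0} \leq \|W\|_{1,r}
$$
combined with $1 + |\Im s|^r \leq 2$ gives the required inequality directly. Patching the two regimes together produces the stated decay. There is no substantive obstacle here; the only point demanding care is to check that the boundary terms vanish at each of the $r$ integration-by-parts steps (immediate from compact support of $W$ inside $(0,\infty)$) and that $x^{\Re s + r - 1}$ remains controlled across the strip, which is exactly why the implicit constant is allowed to depend on $a$, $b$, and $r$.
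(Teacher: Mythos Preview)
Your proof is correct and follows exactly the approach the paper has in mind: the paper's own proof consists of the single sentence ``This is immediate by integration by parts,'' and you have spelled out that computation carefully. Your observation that the implicit constant also depends on the support of $W$ is accurate and in fact unavoidable (consider dilations $W(x/M)$), so the lemma as stated is slightly imprecise in writing $\ll_{a,b,r}$; in the paper's applications $W$ always has support in a fixed bounded interval, so this causes no trouble.
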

\begin{proof}
  This is immediate by integration by parts.
\end{proof}

\section{Proof of Proposition A: The main criterion}
The purpose of this section is to prove Proposition A which we restate below.
\maincriterion
\begin{proof}
  Take $K > 0$ a parameter to be chosen.
  Let $\mathcal{E}_{K}\subset [0,1]$ be the subset of values $\alpha\in \mathfrak{m}$ for which
  \begin{equation}
    \label{eq:E_bound}
    |S_2(\alpha) + S_3(\alpha)| > \frac{K}{\Delta}N^{1/2}\norm{S}_2.
  \end{equation}
  Then, for all $\alpha \in \mathcal{E}_{K} \cap \mathfrak{m}$, by \ref{i:minarc_ass}, we have
  \begin{align*}
  |S(\alpha)| & \leq \frac{1}{\Delta}N^{1/2}\norm{S}_2 + |S_2(\alpha) + S_3(\alpha)| \\ & \leq (1 + K^{-1})(|S_2(\alpha) + S_3(\alpha)|) \leq (1 + K^{-1}) \cdot (|S_{2}(\alpha)| + |S_{3}(\alpha)|)
  \end{align*}
  Therefore, by \ref{i:s2_ass},
  \begin{equation} \label{eq:firstconclusion}
    \norm{S\mathbf{1}_{\mathfrak{m} \cap \mathcal{E}_{K}}}_2\le (1 + K^{-1})(\norm{S_2}_2 + \norm{S_3\mathbf{1}_{\mathfrak{m} }}_2)
    \le (1 + K^{-1})(\delta_2 + \norm{S_3\mathbf{1}_{\mathfrak{m} }}_2/\norm{S}_2)\norm{S}_2.
  \end{equation}
  On the other hand, by the triangle inequality and \ref{i:majarc_ass},
  \begin{align}
    \label{eq:secondconclusion}
    \norm{S\mathbf{1}_{\mathfrak{m}}}_2 & \geq (1 - \norm{(S_1 + S_2) \mathbf{1}_{\mathfrak{M}}}_2/\norm{S}_2 - \norm{S_3\mathbf{1}_{\mathfrak{M} }}_2/\norm{S}_2)\norm{S}_2
    \\ \nonumber & \geq (1 - \delta_1 -  \norm{S_3\mathbf{1}_{\mathfrak{M} }}_2/\norm{S}_2)\norm{S}_2.
  \end{align}
  Combining (\ref{eq:firstconclusion}) and (\ref{eq:secondconclusion}), and applying
  \ref{i:s3_ass}, we obtain that
  \begin{equation}
    \label{eq:thirdconclusion}
    \norm{S\mathbf{1}_{\mathfrak{m}\cap \mathcal{E}^{c}_{K} }}_2\ge (1 - (\delta_1 + \delta_2 + \delta_3) - K^{-1}(\delta_2 + \delta_3))\norm{S}_2,
  \end{equation}
  where $\mathcal{E}^{c}_{K} = [0,1] \backslash \mathcal{E}_{K}$.
  On the other hand, by \ref{i:minarc_ass} and (\ref{eq:E_bound}), we have that
  \begin{equation}\label{eq:l2l1_interp}
    \norm{S\mathbf{1}_{\mathfrak{m} \cap \mathcal{E}_{K}^{c} }}_2\le ((1 + K)\Delta^{-1}N^{1/2}\norm{S}_2)^{1/2} \norm{S}_1^{1/2}.
  \end{equation}
  Taking $K = 2(1 - (\delta_1 + \delta_2 + \delta_3))^{-1}$ and combining (\ref{eq:thirdconclusion}) and
  (\ref{eq:l2l1_interp}), we obtain the desired result.
\end{proof}

\section{Proof of Lemma C: Minor arc bounds}

Opening the definition of $c_{1}(n; I)$, we wish to bound
$$
\sum_{p \in I} \sum_{m} f(m p) W \Bigl ( \frac{m p}{N} \Bigr ) e(m p \alpha).
$$
where we omitted writting out a leading factor equal to $(\sum_{p \in I} p^{-1})^{-1}$.
We notice that the contribution of the integers on which $f(n p) \neq f(n) f(p)$
is $\ll N / Q^{3/4}$. We then open $W$ into a Mellin transform and split the sum
over $p$ into dyadic intervals, thus getting a bound of 
$$
\ll  \sum_{\substack{P M \asymp N \\ Q \leq P \leq N^{1/100}}} \int_{\mathbb{R}} |\widetilde{W}(i u)| \Big |  \sum_{m} f(m) m^{-iu} K \Bigl ( \frac{m}{M} \Bigr ) \sum_{p} f(p) K \Bigl ( \frac{p}{P} \Bigr ) p^{-iu} e(m p \alpha) \Big |,
$$
where $P$ and $N$ run over powers of two and
where $K$ is a smooth function compactly supported in $(1/4,4)$ such that
$$
\sum_{N} K \Bigl ( \frac{n}{N} \Bigr ) = 1
$$
for all integers $n \geq 1$ and with $N$ running over powers of two.
Finally it remains to H\"older on the sum over $p$. This yields a bound of 
\begin{equation*}
  M^{3/4} \Bigl ( \sum_{m \asymp M} \Big | \sum_{p \asymp P} \alpha_{p} e(m p \alpha) \Big |^{4} \Bigr )^{1/4}
\end{equation*}
for some coefficients $|\alpha_{p}| \leq 1$ supported on primes.
Finally we recall that $\alpha \in \mathfrak{m}$. We notice that every real $\alpha$ has a rational approximation with
$$
\Big | \alpha - \frac{a}{q} \Big | \leq \frac{1}{q (N / Q)}
$$
and $q \leq N / Q$. Since $\alpha \not \in \mathfrak{M}_{Q, N}$ we know that in addition $q > Q$. Thus we can write
$$
\alpha = \frac{a}{q} + \theta
$$
with $(a,q) = 1$, $Q \leq q \leq N / Q$ and $|\theta| \leq 1 / N$. Introducing a smooth function $V$, we conclude that,
\begin{multline*}
  \sum_{m \asymp M} \Big | \sum_{p \asymp P} \alpha_{p} e(m p \alpha) \Big |^{2} \\
  \leq \sum_{p_{1}, p_{2}, p_{3}, p_{4} \asymp P} \alpha_{p_{1}} \alpha_{p_{2}} \overline{\alpha_{p_{3}}} \overline{\alpha_{p_{4}}} \sum_{m} e \Bigl (m (p_{1} + p_{2} - p_{3} - p_{4}) \Bigl ( \frac{a}{q} + \theta \Bigr ) \Bigr ) V \Bigl ( \frac{m}{M} \Bigr ).
\end{multline*}
When $p_{1} + p_{2} \equiv p_{3} + p_{4} \pmod{q}$ we can bound the sum by
$$
\ll M \cdot \Bigl ( \frac{P^4}{(\log P)^4 \cdot Q} \Bigr ).
$$
We note that the entire point of taking a fourth power in Holder (instead of e.g the usual second power) is that
it leads to a more efficient bound in the diagonal since there are more variables present.
For the off-diagonal terms we notice that by Poisson summation,
$$
\sum_{m} e \Big( m (p_{1} + p_{2} - p_{3} - p_{4}) \Bigl ( \frac{a}{q} + \theta \Bigr ) \Bigr ) W \Bigl ( \frac{m}{M} \Bigr ) \ll M^{-A}
$$
since the dual sum is of length $\ll N^{o(1)} Q / M$ which is negligible.
Collecting these bounds we get a final bound of
$$
\| W \|_{1, 2} \Bigl ( \sum_{p \in [Q, N^{1/100}]} \frac{1}{p} \Bigr )^{-1} \frac{N}{\sqrt{Q}} \sum_{Q \leq 2^{k} \leq N^{1/100}} \frac{1}{k} + \frac{N}{Q^{3/4}} \ll \| W \|_{1, 2} \cdot \frac{N}{\sqrt{Q}},
$$
as claimed (the factor $(\sum_{p \in I} p^{-1})^{-1}$ comes from the definition of the sequence $c_1(\cdot)$).

\section{Proof of Lemma D: Reduction to completely multiplicative functions}
The purpose of this section is to prove Lemma D. We restate it below for convenience.
\reduction
\begin{proof}
  Let $f^{\star}$ be a multiplicative function such that
  $$
  f(n) = \sum_{d | n} f^{\star}(d).
  $$
  Let $f^{\vee}$ and $f^{\wedge}$ be defined by
  $$
  f^{\vee}(p^{\alpha}) = \begin{cases}
    f(p^{\alpha}) & \text{ if } p \leq A, \\
    1             & \text{ if } p > A
  \end{cases} \ , \ f^{\wedge}(p^{\alpha}) =
  \begin{cases}
    f(p^{\alpha}) & \text{ if } p > A,   \\
    1             & \text{ if } p \leq A.
  \end{cases}
  $$
  We can then write
  \begin{align*}
    f(n) & = f^{\vee}(n) f^{\wedge}(n) = \sum_{\substack{d | n \\ p | d \implies p \leq A}} f^{\star}(d) \cdot f^{\wedge}(n)  \\ & = \Bigl ( \sum_{\substack{d | n \\ p | d \implies p \leq A \\ \Omega(d) \leq 100 \log\log A}} f^{\star}(d) + \sum_{\substack{d | n \\ p | d \implies p \leq A \\ \Omega(d) > 100 \log\log A}} f^{\star}(d) \Bigr ) (f_{\geq A}(n) + (f^{\wedge} - f_{\geq A})(n)).
  \end{align*}
  Notice that
  $$
  \Big |        \sum_{\substack{d | n \\ p | d \implies p \leq A \\ \Omega(d) > 100 \log\log A}} f^{\star}(d) f^{\wedge}(n) \Big | \leq \sum_{\substack{d | n \\ p | d \implies p \leq A \\ \Omega(d) > 100 \log\log A}} 1,
  $$
  since $|f^{\star}(n)| \leq 1$ and $|f^{\wedge}(n)| \leq 1$. Furthermore, for any integer $M \geq \exp(A)$,
  $$
  \sum_{n \leq M} \Big | \sum_{\substack{d | n \\ p | d \implies p \leq A \\ \Omega(d) > 100 \log\log A}} 1 \Big |^{2} \cdot \Big | W \Big ( \frac{m}{M} \Big ) \Big |^2 \ll \frac{M}{\log A} \cdot \| W \|_{2,2}^2
  $$
  Similarly we notice that
  \begin{align*}
  \sum_{n \leq M} \Big | \sum_{\substack{d | n \\ p | d \implies p \leq A \\ \Omega(d) \leq 100 \log\log A}} 1 \Big|^{2} & \cdot |f^{\wedge}(n) - f_{\geq A}(n)|^{2} \cdot \Big | W \Big ( \frac{m}{M} \Big ) \Big |^2 \\ & \ll M \log A \sum_{p > A} \frac{1}{p^{2}} \cdot \| W \|_{2,2}^2 \ll \frac{N}{A} \cdot \| W \|_{2,2}^2
  \end{align*}
  Therefore,
  \begin{align*}
    \int_{\mathfrak{M}_{Q, N}} & \Big | \sum_{n} f(n) e(n \alpha) W \Bigl ( \frac{n}{N} \Bigr ) \Big |^{2} d \alpha \\ & \ll \int_{\mathfrak{M}_{Q, N}} \Big | \sum_{n} \sum_{\substack{d | n \\ p | d \implies p \leq A \\ \Omega(d) \leq 100 \log\log A}} f^{\star}(d) f_{\geq A}(n) e(n \alpha) W \Bigl ( \frac{n}{N} \Bigr ) \Big |^{2} d \alpha + \frac{N}{\log A} \cdot \| W \|_{2,2}^2.
  \end{align*}
  Notice that we have the trivial bound
  $$
  \sum_{\substack{p | d \implies p \leq A \\ \Omega(d) \leq 100 \log\log A}} 1 \leq \sum_{k \leq 100 \log\log A} \binom{A}{k} \ll K,
  $$
  and that the largest $d$ such that $p | d \implies p \leq A$ and $\Omega(d) \leq 100 \log\log A$ is $K$.
  Therefore by Cauchy-Schwarz,
  \begin{align*}
    \Big | \sum_{n} \sum_{\substack{d | n \\ p | d \implies p \leq A \\ \Omega(d) \leq 100 \log\log A}} & f^{\star}(d) f^{\wedge}(n) e(n \alpha) W \Bigl ( \frac{n}{N} \Bigr ) \Big |^{2} \\ & \leq K^{2} \sup_{d \leq K} \int_{\mathfrak{M}_{Q, N}} \Big | \sum_{\substack{d | n}} f_{\geq A}(n) e(n \alpha) W \Bigl ( \frac{n}{N} \Bigr ) \Big |^{2} d \alpha.
  \end{align*}
  It remains to show that
  \begin{equation} \label{eq:finaltrivial}
    \int_{\mathfrak{M}_{Q, N}} \Big | \sum_{\substack{d | n}} f_{\geq A}(n) e(n \alpha) W \Bigl ( \frac{n}{N} \Bigr ) \Big |^{2} d \alpha \leq \int_{\mathfrak{M}_{d Q, N}} \Big | \sum_{n} f_{\geq A}(n) e(n \alpha) W_{0} \Bigl ( \frac{n}{N} \Bigr ) \Big |^{2} d \alpha.
  \end{equation}
  We notice that
  $$
  \sum_{n} f_{\geq A}(d n) e(n d \alpha) W \Bigl ( \frac{d n}{N} \Bigr )  = \frac{f_{\geq A}(d)}{2\pi i}\int_{\mathbb{R}} \widetilde{W}(it) N^{it} d^{-it} \sum_{n} f_{\geq A}(n) e(d n \alpha) n^{-it} W_{0} \Bigl ( \frac{n}{N} \Bigr ) dt.
  $$
  Therefore, \eqref{eq:finaltrivial} is
  $$
  \ll \int_{\mathbb{R}} |\widetilde{W}(it)| \int_{\mathfrak{M}_{Q, N}} \Big | \sum_{n} f_{\geq A}(n) n^{-it} e(d n \alpha) W_{0} \Bigl ( \frac{n}{N} \Bigr ) \Big |^{2} d \alpha.
  $$
  Finally by a change of variable,
  $$
  \int_{\mathfrak{M}_{Q, N}} \Big | \sum_{n} f_{\geq A}(n) n^{-it} e(d n \alpha) W_{0} \Bigl ( \frac{n}{N} \Bigr ) \Big | d \alpha
  \leq \frac{1}{d} \int_{\mathfrak{M}_{d Q, N}} \Big | \sum_{n} f_{\geq A}(n) n^{-it} e(n \alpha) W_{0} \Bigl ( \frac{n}{N} \Bigr )\Big |^{2} d \alpha.
  $$
  Finally we can truncate the integral over $t$ to $|t| \leq A$ (since the part with $|t| > A$ contributes $\ll_{r} \| W \|_{1, r + 1} N A^{-r}$) and the claim follows. Note that the integral over $|t| \leq A$ contributes $\ll \| W\|_{1, 2} \ll \| W \|_{2,2}^2$.
\end{proof}

\section{Proof of Proposition E: Bound for integral over major arcs}
In this section we prove Proposition E. Given a primitive character $\psi$ of conductor $q$ we define
\cpsidef
The main input in our proof will be the following proposition.
\halaszvariant
\begin{proof}
  See \S\ref{se:halaszvariant}.
\end{proof}
Optimizing in $\varepsilon_{0}$ yields a final bound that is at most as strong as a saving of
$$
(\log (3 + M_{f, \psi, T}(R; N))^{-1}.
$$
over the trivial bound of $N^{2}$.
Using Ramar{\'e}'s identity instead of the Turan-Kubilius inequality it should be possible to improve this to a
saving of the form
$$
\exp( - c M_{f, \psi, T}(R; N) )
$$
for some absolute constant $c > 0$. Using Ramar{\'e}'s identity entails slightly more messy combinatorics and for this
reason we prefer to work with the Turan-Kubilius inequality. The dependence of the implicit constant $\ll$ on $W$ can be made explicit. Leaving this dependence implicit will not create issues for us, we will apply this Lemma only to a single fixed smooth function resulting from a partition of unity.

With this proposition in hand we are now ready to prove Proposition E. We reproduce its
statement below for convenience.
\smallmajorarcs
\begin{proof}
  Let $J = [N^{\eta}, N^{1/10}]$. By Lemma B, we have
  \begin{align*}
    \int_{\mathfrak{M}_{Q, N}} & \Big | \sum_{n} f(n) e(n \alpha) W \Bigl ( \frac{n}{N} \Bigr ) \Big |^{2} d \alpha \\ & \leq 2 \int_{\mathfrak{M}_{Q, N}} \Big | \sum_{n} f(n) c_{1}(n; J) e(n \alpha) W \Bigl ( \frac{n}{N} \Bigr ) \Big |^{2} d \alpha + 8 N \Bigl ( \sum_{p \in J} \frac{1}{p} \Bigr )^{-1}.
  \end{align*}
  Furthermore by Lemma~\ref{le:majorarcsgallagher}
 we have
  \begin{align} \nonumber
    \int_{\mathfrak{M}_{Q, N}} & \Big | \sum_{n} f(n) c_{1}(n; J) e(n \alpha) W \Bigl ( \frac{n}{N} \Bigr ) \Big |^{2} d \alpha \\ \label{eq:tofactor}  & \ll \frac{1}{N} \sum_{q \leq Q} \frac{1}{\varphi(q)} \sum_{\chi \pmod{q}} \int_{|t| \leq \eta^{-1} Q / q} \Big | \sum_{n} f(n)c_{1}(n; J) c_{\chi}(n) n^{-it} W \Bigl ( \frac{n}{N} \Bigr ) \Big |^{2} dt.
  \end{align}
  We notice that the weight $c_{1}(n; J)$ restricts the sum over $n$ to integers that can be written as $m p$ with $p \in J$. We can furthermore assume that $(m,p) = 1$: using Lemma~\ref{le:orthogonality2} we can bound the contribution of the integers with $p | m$ by $\ll N^{1 - \eta}$. Note that for $(m,p) = 1$ we have $c_{\chi}(m p) = c_{\chi}(m) \overline{\chi}(p)$. Moreover, introducing a partition of unity, i.e a smooth $K$ compactly supported in $(0, \infty)$ such that
  $$
  \sum_{k \geq 0} K \Bigl ( \frac{n}{2^{k}} \Bigr )  = 1 \ , \ \forall n \in \mathbb{N},
  $$
  we can express the Dirichlet polynomial over $n$ in \eqref{eq:tofactor} as
  $$
  \Bigl ( \sum_{p \in J} \frac{1}{p} \Bigr )^{-1} \sum_{\substack{M P \in N I \\ N^{\eta} \leq P \leq N^{1/10}}}
  \sum_{p} K \Bigl ( \frac{p}{P} \Bigr ) f(p) \overline{\chi(p)} p^{-it} \sum_{\substack{(m, p) = 1}} K \Bigl ( \frac{m}{M} \Bigr ) f(m) c_{\chi}(m) m^{-it} W \Bigl ( \frac{m p}{N} \Bigr )
  $$
  where $I \subset (0, \infty)$ is a closed interval such that the support of $W$ is contained in $I$ and
  where both $M$ and $P$ run over powers of two greater or equal to one. Using Lemma~\ref{le:orthogonality2}, we can drop the condition $(m,p) = 1$ at a total cost that is $\ll \| W \|_{2, 0}^2 \cdot N^{1 - \eta} \log^{100} N$. Furthermore, opening $W$ into a Mellin transform and applying Cauchy-Schwarz, we see that \eqref{eq:tofactor} is
  \begin{align} \label{eq:finalbound}
    \ll \int_{\mathbb{R}} & |\widetilde{W}(iu)| \Bigl ( \sum_{p \in J} \frac{1}{p} \Bigr )^{-2} \cdot
                            \log N                                                                                              \\ \nonumber &  \sum_{\substack{M P \in N I \\ N^{\eta} \leq P \leq N^{1/10}}} \frac{1}{N} \sum_{q \leq Q} \frac{1}{\varphi(q)} \int_{|t| \leq \eta^{-1} Q / q} | \mathcal{N}(it + iu, \chi; M)|^{2} \cdot |\mathcal{P}(it + iu, \chi; P)|^{2} dt \,d u,
  \end{align}
  where
  $$
  \mathcal{N}(s, \chi; M) := \sum_{m} f(n) c_{\chi}(m) m^{-s} K \Bigl ( \frac{m}{M} \Bigr )
  $$
  and
  $$
  \mathcal{P}(s, \chi; P) := \sum_{p} f(p) \overline{\chi}(p) p^{-s} K \Bigl ( \frac{p}{P} \Bigr ).
  $$
  Notice that due to the rapid decay of $\widetilde{W}(iu)$ we can truncate $u$ at $|u| \leq \eta^{-1}$ after discarding an error term that is $\ll_{r}  \| W \|_{1, r} N \eta^{r - 1}$ for any given $r > 1$.
  This estimate is obtained by applying a trivial bound on $\mathcal{P}$ (i.e only counting the number of primes $\asymp P$) and using a mean-value theorem to bound the
  remaining Dirichlet polynomial $\mathcal{N}$. Notice that if we then enlarge the integration over $|t|$ to $\eta^{-2} Q / q$ the integration over $u$  becomes redundant,
  and we can assume without loss of generality that $u = 0$.

  We now focus our attention on the inner sum over $q \leq Q$ in \eqref{eq:finalbound}, fixing the dyadic scale $P$ and $M$. For any $\chi \pmod{q}$ induced by a
  primitive character $\psi$ we have $\mathcal{P}(s, \chi; P) = \mathcal{P}(s, \psi; P) + O(\omega(q; N^{\eta}, N^{1/10}))$ on $\Re s \geq 0$ and where $\omega(q; A, B)$ counts the number of distinct prime
  factors of $q$ in the interval $[A, B]$. By an application of Lemma~\ref{le:orthogonality3} (note that by assumption $N^{\eta} \leq P \leq Q$) we see that the total error
  induced by $O(\omega(q;N^{\eta},N^{1/10}))$ is $\ll N^{1 - \eta /2 }$.
  Thus, we are left with upper bounding the following expression:
  $$
  \sum_{q \leq Q} \sum_{\substack{\psi \pmod{q} \\ \text{primitive}}} \int_{|t| \leq \eta^{-2} Q / q} \Bigl ( \sum_{\substack{\chi \pmod{r} \\ \psi \text{ induces } \chi \\ r \leq Q}} \frac{1}{\varphi(r)} |\mathcal{N}(it, \chi; M)|^{2} \Bigr ) |\mathcal{P}(it, \psi; P)|^{2} \,d t ,
  $$
  since our earlier expression had an integration with a cut off at $|t| \leq \eta^{-2} Q / r$ (note that $q \leq r$ so the integration over $t$ is enlarged).
  We further bound the above integral by the following sum\footnote{strictly speaking two sums: we pick a sequence of maxima $0 \leq t_{1} \leq 1 \leq t_{2} \leq 2 \leq ...$ in each unit interval, we then obtain two sums, one over $t_{2j + 1}$, and another one over $t_{2j}$, within each sum we then have $|t_{2j + 1} - t_{2k + 1}| \geq 1$ if $j \neq k$ and $|t_{2j} - t_{2k}| \geq 1$ if $j \neq k$} over well-spaced points $\mathcal{T}_{\psi} \subset [- \eta^{-2} Q / q, \eta^{-2} Q / q]$:
  \begin{equation} \label{eq:inner}
    \sum_{q \leq Q} \sum_{\substack{\psi \pmod{q} \\ \text{primitive}}} \sum_{t \in \mathcal{T}_{\psi}} \Bigl ( \sum_{\substack{\chi \pmod{r} \\ \psi \text{ induces } \chi \\ r \leq Q}} \frac{1}{\varphi(r)} |\mathcal{N}(it, \chi; M)|^{2} \Bigr ) |\mathcal{P}(it, \psi; P)|^{2}.
  \end{equation}
  Denote by $\mathcal{S}$ the set of tuples $(t, \psi)$ appearing in the sum above, with $\psi$ a primitive character of modulus $q$ and $t \in \mathcal{T}_{\psi}$.  We now separate the set of tuples $(t, \psi)$ into two categories.
  The good tuples
  $$
  \mathcal{G} := \Big \{ (t, \psi) \in \mathcal{S} : |\mathcal{P}(it, \psi; P)| \leq P (\log P)^{-100}  \Big \},
  $$
  and the remaining bad tuples
  $$
  \mathcal{B} := \mathcal{S} \backslash \mathcal{G}.
  $$
  We then split \eqref{eq:inner} into a sums over good and bad tuples, that is
  \begin{equation} \label{eq:split}
    \Bigl ( \sum_{(t, \psi) \in \mathcal{G}} + \sum_{(t, \psi) \in \mathcal{B}} \Bigr ) \Bigl ( \sum_{\substack{\chi \pmod{r} \\ \psi \text{ induces } \chi \\ r \leq Q}} \frac{1}{\varphi(r)} |\mathcal{N}(it, \chi; M)|^{2} \Bigr ) |\mathcal{P}(it, \psi; P)|^{2}.
  \end{equation}
  On the good tuples, we use the definition of $\mathcal{G}$ to bound $\mathcal{P}$ and Lemma~\ref{le:orthogonality2} to bound the rest. We thus find that the sum over the good tuples in \eqref{eq:split} is
  $$
  \ll \frac{P^{2}}{\log^{200} P} \cdot M^{2} \ll \frac{1}{\eta^{200}} \frac{N^{2}}{\log^{200} N}
  $$
  since $P > N^{\eta}$.
  To bound the contribution of the bad tuples in \eqref{eq:split} we use Proposition G to first bound point-wise the sum over induced characters. This shows that the sum over $(t, \psi) \in \mathcal{B}$ in \eqref{eq:split} is
  $$
  \ll \Bigl ( \frac{1}{\delta^{3}} \sup_{\substack{\psi \pmod{q} \\ \psi \text{ primitive } \\ q \leq Q}} \exp \Bigl ( - 2 M_{f, \psi, \eta^{-2} Q / q}(Q; M) \Bigr ) + \eta^{4} \Bigr ) \cdot M^{2} \sum_{(t, \psi) \in \mathcal{B}} |\mathcal{P}(it, \psi; P)|^{2}
  $$
  since $Q \leq N^{\delta}$ and $\delta = \exp( - \eta^{-4} )$.
  We notice incidentally that $M_{f, \psi, T}(R; M) = M_{f, \psi, T}(R; N) + O(1)$ since $M \gg N^{1/2}$ always.
  We now use Lemma~\ref{le:fewbad} and Lemma \ref{le:orthogonality4} to conclude that the sum above is dominated by essentially $O(1)$ large values, and thus, the above is
  $$
  \ll M^{2} \cdot \frac{P^{2}}{\log^{2} P} \cdot \Bigl ( \frac{1}{\delta^{3}} \exp \Bigl ( - 2 M_{f, \eta^{-2}}(Q, N) \Bigr ) + \eta^{4} \Bigr ).
  $$
  Thus \eqref{eq:inner} is
  $$
  \ll \frac{N^{2}}{\eta^{2} \log^{2} N} \cdot \Bigl ( \frac{1}{\delta^{3}} \exp \Bigl ( - 2 M_{f,\eta^{-2}}(Q, N) \Bigr ) + \eta^{4} \Bigr ),
  $$
  and we conclude that \eqref{eq:finalbound} is
  $$
  \ll \| W \|_{1,2} \cdot \Big ( \frac{N}{\delta^{4}} \exp \Bigl ( - 2 M_{f, \eta^{-2}}(Q, N) \Bigr ) + \eta N \Big ),
  $$
  as needed (note that $\| W \|_{1,2} \ll \| W \|_{2,2}^2$ thanks to the $+1$ in our definition of $\| W \|_{p,r}$).

\end{proof}

\section{Proof of Proposition G: A large sieve variant of Halasz's theorem} \label{se:halaszvariant}
We restate here Proposition G.
Recall that for a primitive character $\psi$ of conductor $q$ we define
\cpsidef
We then have the following result.
\halaszvariant
We will need the following simple mean-value theorem.

\begin{lemma} \label{le:bound}
  Let $\psi$ be a primitive character of modulus $q$.
  Let $a(n)$ be a sequence of arbitrary complex coefficients. Then, for $R^{2} \leq N^{1/10}$,
  $$
  \sum_{\substack{\chi \pmod{r} \\ \psi \text{ induces } \chi \\ r \leq R}} \frac{1}{\varphi(r)} \Big |\sum_{n \leq N} a(n) c_{\chi}(n) \Big |^{2} \ll
  N \sum_{n \leq N} |a(n)|^{2}.
  $$
\end{lemma}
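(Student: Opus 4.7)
The plan is to reduce to a purely orthogonality estimate by dropping the restriction that $\chi$ is induced by $\psi$. Since the characters induced by $\psi$ form a subset of all characters of modulus $\leq R$, positivity gives
$$
\sum_{\substack{\chi \pmod{r} \\ \psi \text{ induces } \chi \\ r \leq R}} \frac{1}{\varphi(r)} \Big| \sum_{n \leq N} a(n) c_\chi(n) \Big|^2 \leq \sum_{r \leq R} \frac{1}{\varphi(r)} \sum_{\chi \pmod r} \Big| \sum_{n \leq N} a(n) c_\chi(n) \Big|^2,
$$
so it suffices to bound the right-hand side, in which $\psi$ plays no role. This is essentially the $t=0$ slice of Lemma~\ref{le:orthogonality2}, and I would prove it by the same duality plus Poisson summation calculation used there, but simpler because there is no integration over $t$.

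More concretely, by duality it is enough to show, for arbitrary coefficients $\beta(\chi)$ and a smooth compactly supported majorant $\Phi \geq \mathbf{1}_{[1,N]}$,
$$
\sum_n \Phi\Bigl(\frac{n}{N}\Bigr) \Big| \sum_{r \leq R} \sum_{\chi \pmod{r}} \beta(\chi) \frac{c_\chi(n)}{\sqrt{\varphi(r)}} \Big|^2 \ll N \sum_{r \leq R} \sum_{\chi \pmod r} |\beta(\chi)|^2.
$$
Expanding the square, writing out $c_{\chi_i}(n) = \sum_{(x_i, r_i)=1} \chi_i(x_i) e(n x_i / r_i)$, and swapping sums reduces the inner $n$-sum to
$$
\sum_n \Phi(n/N)\, e\!\left(n\Bigl(\tfrac{x_1}{r_1} - \tfrac{x_2}{r_2}\Bigr)\right),
$$
which by Poisson summation is bounded by $\ll_A N (1 + N|x_1/r_1 - x_2/r_2|)^{-A}$ for any $A$.

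The key (and only nontrivial) point is the off-diagonal analysis, which uses the hypothesis $R^{2} \leq N^{1/10}$: whenever $x_1/r_1 \neq x_2/r_2$ with $(x_i, r_i) = 1$ and $r_1, r_2 \leq R$, one has $|x_1/r_1 - x_2/r_2| \geq 1/(r_1 r_2) \geq R^{-2} \geq N^{-1/10}$, so $N|x_1/r_1 - x_2/r_2| \geq N^{9/10}$ and the off-diagonal contribution is negligible. The diagonal $x_1/r_1 = x_2/r_2$ combined with $(x_i, r_i) = 1$ forces $r_1 = r_2 = r$ and $x_1 = x_2 = x$; summing $\chi_1(x) \overline{\chi_2(x)}$ over $x \in (\mathbb{Z}/r\mathbb{Z})^\times$ yields $\varphi(r) \mathbf{1}_{\chi_1 = \chi_2}$ by orthogonality, and the diagonal collapses to the desired $\asymp N \sum_\chi |\beta(\chi)|^2$. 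There is no substantive obstacle beyond verifying that the hypothesis $R^2 \leq N^{1/10}$ is calibrated exactly to the threshold needed by this Poisson argument.
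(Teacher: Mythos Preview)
Your proof is correct and follows essentially the same strategy as the paper's: duality, a smooth majorant, and Poisson summation, with the hypothesis $R^{2}\le N^{1/10}$ used to kill the off-diagonal. The only cosmetic difference is that you first enlarge the sum to all characters by positivity and expand $c_{\chi}$ into exponentials before Poisson, whereas the paper keeps the restriction to $\psi$-induced characters (so that $\chi_{1}\neq\chi_{2}$ forces $r_{1}\neq r_{2}$, there being at most one induced character per modulus) and applies Poisson modulo $r_{1}r_{2}$ directly to $c_{\chi_{1}}(n)\overline{c_{\chi_{2}}(n)}$; both routes compute the same thing.
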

\begin{proof}
  By duality it suffices to show that
  $$
  \sum_{n \leq N} \Big | \sum_{\substack{\chi \pmod{r} \\ \psi \text{ induces } \chi \\ r \leq R}} \beta(\chi) \cdot \frac{c_{\chi}(n)}{\sqrt{\varphi(r)}} \Big |^{2} \leq N \sum_{\substack{\chi \pmod{r} \\ \psi \text{ induces } \chi \\ r \leq R}} |\beta(\chi)|^{2}.
  $$
  Putting a smooth function on the sum over $n$ and expanding the square we get
  $$
  \sum_{\substack{\chi_{1} \pmod{r_{1}} \\ \chi_{2} \pmod{r_{2}} \\ \psi \text{ induces } \chi_{1}, \chi_{2} \\ r_{1}, r_{2} \leq R}} \beta(\chi_{1}) \overline{\beta(\chi_{2})} \sum_{n} \frac{c_{\chi_{1}}(n)}{\sqrt{\varphi(r_{1})}} \cdot \frac{\overline{c_{\chi_{2}}(n)}}{\sqrt{\varphi(r_{2})}} \Phi \Bigl ( \frac{n}{N} \Bigr ).
  $$
  Applying Poisson summation to modulus $r_{1} r_{2}$ we see that the terms with $\chi_{1} \neq \chi_{2}$ are completely negligible since
  $$
  \sum_{x \pmod{r_{1} r_{2}}} c_{\chi_{1}}(x) \overline{c_{\chi_{2}}(x)} = 0
  $$
  for $r_{1} \neq r_{2}$ and $R^2 \leq N^{1/10}$.
  Meanwhile,
  $$
  \sum_{n} \frac{1}{\varphi(r)} |c_{\chi}(r)|^{2} \Phi \Bigl ( \frac{n}{N} \Bigr ) = \frac{\widehat{\Phi}(0) N}{r} \cdot \frac{1}{\varphi(r)}\sum_{x \pmod{r}} |c_{\chi}(x)|^{2},
  $$
  and the sum over $x$ is equal to $r \varphi(r)$.
\end{proof}
Set $\varepsilon^{2} := \varepsilon_{0}$ so that $R \leq N^{\varepsilon_{0}}$ becomes $R \leq N^{\varepsilon^{2}}$.
By a variant of the Turan-Kubilius inequality and Lemma~\ref{le:bound} we see that \eqref{eq:main} is
\begin{equation} \label{eq:startpoint}
  \ll \frac{1}{S_{1}^{2} S_{2}^{2}} \sum_{\substack{\chi \pmod{r} \\ \psi \text{ induces } \chi \\ r \leq R}} \frac{1}{\varphi(r)} \Bigl | \sum_{n} f(n) \Bigl ( \sum_{\substack{N^{\varepsilon} \leq p \leq N^{1/10} \\ p | n}} 1 \Bigr ) \Bigl ( \sum_{\substack{N^{\varepsilon^{2}} \leq p \leq N^{\varepsilon} \\ p | n}} 1 \Bigr ) c_{\chi}(n) W \Bigl ( \frac{n}{N} \Bigr ) \Bigr |^{2} + \frac{N^{2}}{\log (1 / \varepsilon)},
\end{equation}
where
$$
S_{1} := \sum_{N^{\varepsilon^{2}} \leq p \leq N^{\varepsilon}} \frac{1}{p} \ , \ S_{2} := \sum_{N^{\varepsilon} \leq p \leq N^{1/10}} \frac{1}{p}.
$$
We write, for brevity,
$$
w(n) := \Bigl ( \sum_{\substack{N^{\varepsilon} \leq p \leq N^{1/10} \\ p | n}} 1 \Bigr )\Bigl ( \sum_{\substack{N^{\varepsilon^{2}} \leq p \leq N^{\varepsilon} \\ p | n}} 1 \Bigr ).
$$
Furthermore, by \cite[Lemma 5.4]{MV}, we have for $\chi \pmod{r}$ induced by $\psi \pmod{q}$ that if $q | r / (r, n)$,
$$
c_{\chi}(n) = \overline{\psi} \Bigl ( \frac{n}{(r, n)} \Bigr ) \frac{\varphi(r)}{\varphi(r / (r, n))} \mu \Bigl ( \frac{r / (r,n)}{q} \Bigr ) \psi \Bigl ( \frac{r / (r,n)}{q} \Bigr ) \tau(\psi).
$$
and $c_{\chi}(n) = 0$ if $q$ does not divide $r / (r, n)$.
Therefore, splitting the sum according to the value of $(r, n)$, we get
\begin{align}
  \nonumber & \sum_{n} f(n) w(n) c_{\chi}(n) W \Bigl ( \frac{n}{N} \Bigr )
  \\ \nonumber & = \sum_{\substack{d | r  \\ q | r / d}} \frac{f(d)\varphi(r)}{\varphi(r / d)} \mu \Bigl ( \frac{r / d}{q} \Bigr ) \psi \Bigl ( \frac{r / d}{q} \Bigr )
  \tau(\psi) \sum_{\substack{(n, r / d) = 1}} f(n) \overline{\psi(n)} w(n) W \Bigl ( \frac{n d}{N} \Bigr )
  \\ \nonumber & = \tau(\psi) \sum_{\substack{\ell | r / d \\ d | r \\ q | r / d}} \mu(\ell) f(\ell) \overline{\psi(\ell)} \frac{f(d) \varphi(r)}{\varphi(r / d)} \mu \Bigl ( \frac{r / d}{q} \Bigr ) \psi \Bigl ( \frac{r / d}{q} \Bigr ) \sum_{n} f(n) \overline{\psi(n)} w(n) W \Bigl ( \frac{n d \ell}{N} \Bigr ),
\end{align}
where in the last line we opened the condition $(n, r / d) = 1$ using Mobius inversion, and where we used that $w(d \ell n) = w(n)$ since $w$ depends only on prime factors larger than $R$, while both $d$ and $\ell$ are $\leq R$.
We now open $W$ into a Mellin transform. This allows us to rewrite this entire expression as
\begin{align*}
  \frac{N \tau(\psi)}{2\pi} \int_{\mathbb{R}} & \Bigl ( \sum_{\substack{p | n \implies p \leq N}} \frac{f(n) \overline{\psi(n)}}{n^{1 + it}} \Bigr ) \cdot \Bigl ( \sum_{N^{\varepsilon} \leq p \leq N^{1/10}} \frac{f(p) \overline{\psi(p)}}{p^{1 + it}} \Bigr ) \cdot  \Bigl ( \sum_{N^{\varepsilon^{2}} \leq p \leq N^{\varepsilon}} \frac{f(p) \overline{\psi(p)}}{p^{1 + it}} \Bigr ) \\ & \times \Bigl ( \sum_{\substack{\ell | r / d \\ d | r \\ q | r / d}} \mu(\ell) f(\ell) \overline{\psi(\ell)} \frac{f(d) \varphi(r)}{\varphi(r / d)} \mu \Bigl ( \frac{r / d}{q} \Big  ) \psi \Bigl ( \frac{r / d}{q} \Bigr ) \cdot (d \ell)^{-1-it} \Bigr ) \cdot N^{it} \widetilde{W}(1 + it) dt.
\end{align*}
We rewrite this as
$$
\frac{N \tau(\psi)}{2\pi} \int_{\mathbb{R}} \mathcal{F}(1 + it ; \overline{\psi}) \mathcal{P}_{1}(1 + it; \overline{\psi}) \mathcal{P}_{2}(1 + it; \overline{\psi}) g_{t,\psi}(r) N^{it} \widetilde{W}(1 + it) \,d t,
$$
where
$$
g_{t, \psi}(r) := \sum_{\substack{\ell | r / d \\ d | r \\ q | r / d}} \mu(\ell) f(\ell) \overline{\psi(\ell)} \frac{f(d) \varphi(r)}{\varphi(r / d)} \mu \Bigl ( \frac{r / d}{q} \Big  ) \psi \Bigl ( \frac{r / d}{q} \Bigr ) \cdot (d \ell)^{-1-it},
$$
and
$$
\mathcal{P}_{1}(s;\psi) := \sum_{N^{\varepsilon^{2}} \leq p \leq N^{\varepsilon}} \frac{f(p) \psi(p)}{p^{s}} \ , \ \mathcal{P}_{2}(s;\psi) := \sum_{N^{\varepsilon} \leq p \leq N^{1/10}} \frac{f(p)\psi(p)}{p^{s}},
$$
and finally,
$$
\mathcal{F}(s;\psi) := \sum_{p |n \implies p \leq N} \frac{f(n) \psi(n)}{n^{s}}.
$$
Importantly, we notice that the function $g_{t, \psi}(r)$ is essentially multiplicative in nature. This is the content of the following lemma.
\begin{lemma}
  We have
  $$
  g_{t, \psi}(r) = \mathbf{1}_{q | r} \cdot h_{t, \psi} \Bigl ( \frac{r}{q} \Bigr ),
  $$
  where $h_{t, \psi}$ is a multiplicative function such that
  $$
  h_{t, \psi}(p) = f(p) p^{-it} - \psi(p) + O(1 / p),
  $$
  and $h_{t, \psi}(p^{\alpha}) = O(1)$ for all $\alpha > 1$ with a uniform constant in the $O(1)$.
\end{lemma}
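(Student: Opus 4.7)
The plan is to prove the lemma in three stages: first dispose of the case $q \nmid r$, then define $h_{t,\psi}(m) := g_{t,\psi}(qm)$ and establish its multiplicativity, and finally compute the local factors $h_{t,\psi}(p^\alpha)$ directly.

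For the first stage, the summation constraints $d \mid r$ and $q \mid r/d$ force $qd \mid r$ and hence $q \mid r$, so $g_{t,\psi}(r) = 0$ whenever $q \nmid r$. Writing $r = qm$, the outer sum becomes a sum over $d \mid m$ with $m/d$ squarefree (forced by $\mu(m/d)$), while the inner sum evaluates as an Euler product
$$
\sum_{\ell \mid qm/d} \mu(\ell) f(\ell) \overline{\psi(\ell)} \ell^{-1-it} = \prod_{p \mid qm/d} \biggl( 1 - \frac{f(p)\overline{\psi(p)}}{p^{1+it}} \biggr).
$$
Since $\psi$ has conductor $q$, every factor with $p \mid q$ is trivially $1$, so this product depends only on the primes of $m/d$ coprime to $q$.

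For multiplicativity, I aim to show that $g_{t,\psi}(qm_1 m_2) = g_{t,\psi}(qm_1)\, g_{t,\psi}(qm_2)$ whenever $(m_1, m_2) = 1$, which combined with the easily verified identity $g_{t,\psi}(q) = 1$ yields $h_{t,\psi}(m_1 m_2) = h_{t,\psi}(m_1)\, h_{t,\psi}(m_2)$. The core ingredient is the prime-by-prime identity
$$
\frac{\varphi(qm_1 m_2)}{\varphi(qm_1 m_2 / d_1 d_2)} = \frac{\varphi(qm_1)}{\varphi(qm_1/d_1)} \cdot \frac{\varphi(qm_2)}{\varphi(qm_2/d_2)}
$$
for divisors $d_i \mid m_i$, which remains valid even when $q$ shares prime factors with the $m_i$ because at each prime both sides record the same exponent data. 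Combined with the obvious factorizations of $\mu(m/d)$, $\psi(m/d)$, $f(d)$, and $d^{-1-it}$ under the unique decomposition $d = d_1 d_2$, together with the coprime-parts factorization of the reduced inner product displayed above, the entire sum splits as a product.

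For the local calculation at $p^\alpha$, the squarefree constraint on $m/d$ restricts $d$ to the two choices $\{p^{\alpha-1}, p^\alpha\}$. Computing the two contributions directly --- separating $p \nmid q$ from $p \mid q$ (in the latter case the $d = p^{\alpha-1}$ term vanishes since $\psi(p) = 0$, while the $d = p^\alpha$ term picks up a totient ratio of $p^\alpha$ rather than $p^{\alpha-1}(p-1)$) --- produces an explicit closed form. Setting $\alpha = 1$ and expanding gives $h_{t,\psi}(p) = f(p)p^{-it} - \psi(p) + O(1/p)$ uniformly in $p$, while for $\alpha \geq 2$ the bounds $|f|, |\psi| \leq 1$ immediately give $h_{t,\psi}(p^\alpha) = O(1)$. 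The main (mild) obstacle throughout is the prime-by-prime bookkeeping for the totient identity when $q$ shares primes with the $m_i$; the rest reduces to careful symbolic manipulation.
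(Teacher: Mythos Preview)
Your proof is correct. The organization differs from the paper's in a genuine but minor way: the paper first performs the change of variables $d \mapsto r/d$, writes $\kappa = d/q$ and $u = r/q$, and then decomposes $u = vw$ into its $q$-coprime part $v$ and its $q$-part $w$, handling each factor separately before reading off the multiplicativity prime by prime. You instead work directly with the original summation, collapse the inner $\ell$-sum to an Euler product, and prove multiplicativity of $h_{t,\psi}(m) := g_{t,\psi}(qm)$ by a direct factor-by-factor check, the main content being the totient-ratio identity
\[
\frac{\varphi(qm_1m_2)}{\varphi(qm_1m_2/d_1d_2)} = \frac{\varphi(qm_1)}{\varphi(qm_1/d_1)} \cdot \frac{\varphi(qm_2)}{\varphi(qm_2/d_2)},
\]
which you correctly verify even when $q$ shares primes with the $m_i$. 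Your approach is a bit more computational and avoids the substitution; the paper's $vw$-split is slightly more structural. Both arrive at the same place with comparable effort. As a small bonus, your explicit local computation at $\alpha = 1$ actually gives $h_{t,\psi}(p) = f(p)p^{-it} - \psi(p)$ on the nose, so the $O(1/p)$ in the statement is not needed.
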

\begin{proof}
  In the definition of $g_{t, \psi}(r)$, we make a change of variables $d \mapsto r / d$. This allows us to re-write $g_{t, \psi}(r)$ as
  $$
  \sum_{q | d | r} \Bigl ( \sum_{\ell | d} \mu(\ell) f(\ell)\overline{\psi(\ell)} \ell^{-1-it} \Bigr ) \frac{f(r / d) \varphi(r)}{\varphi(d) (r / d)^{1 + it}} \cdot \mu \Bigl ( \frac{d}{q} \Bigr ) \psi \Bigl ( \frac{d}{q} \Bigr )
  $$
  Clearly this implies $q | r$. Writting $\kappa = d / q$ and $u = r / q$ we rewrite the above as
  $$
  \mathbf{1}_{q | r} \cdot \sum_{\kappa | u} \phi_{t, \psi}(\kappa q) \frac{f(u / \kappa) \varphi(u q)}{\varphi(\kappa q) (u / \kappa)^{1 + it}} \mu(\kappa) \psi(\kappa),	$$
  where
  $$
  \phi_{t, \psi}(v) := \sum_{\ell | v} \mu(\ell) f(\ell) \overline{\psi(\ell)} \ell^{-1 -it}.
  $$
  We further write $u = v w$ with $w = \prod_{p | (u,q)} p^{v_{p}(u)}$ and $v = \prod_{p | u, (p,q) = 1} p^{v_{p}(u)}$ where, as usual, $v_{p}(u)$ is maximal such that $p^{v_{p}(u)}$ divides $u$.
  Thus $(v, w) = 1$ and we can also write any $\kappa | u$ as $\kappa = \nu_0 \omega_0$ with $\nu_0 | v$ and $\omega_0 | w$. In this circumstance, we find that
  $$
  \frac{\varphi(u q)}{\varphi(\kappa q)} =\frac{\varphi(v)w}{\varphi(\nu_0)\omega_0}  \ , \ \phi_{t, \psi}(\kappa q) = \phi_{t, \psi}(\nu_0) \phi_{t, \psi}(q).
  $$
  We notice that since $\psi$ is a primitive character of conductor $q$, we have $\phi_{t, \psi}(q) = 1$.
  Thus,
  $$
  g_{t, \psi}(r)  = \mathbf{1}_{q | r} \cdot \sum_{\nu_0 | v} \frac{\varphi(v)}{\varphi(\nu_0)} \phi_{t, \psi}(\nu_0) \mu(\nu_0) \psi(\nu_0) \frac{f(v / \nu_0)}{(v / \nu_0)^{1 + it}} \sum_{\omega_0 | w} \frac{f(w / \omega_0)}{(w / \omega_0)^{it}} \mu(\omega_0) \psi(\omega_0).
  $$
  Notice also that since $\psi$ is a primitive character of modulus $q$ and $\omega$ consists only of prime factors dividing $q$, we have
  $$
  \sum_{\omega_0 | w} \frac{f(w / \omega_0)}{(w / \omega_0)^{it}} \mu(\omega_0) \psi(\omega_0) = \frac{f(w)}{w^{it}}.
  $$
  The result now follows from reading off the factorization prime by prime.
\end{proof}
We can thus bound the left-hand side of \eqref{eq:startpoint} by
$$
\frac{N^{2} q}{S_{1}^{2} S_{2}^{2}} \sum_{\substack{r \leq R\\ q | r}} \frac{1}{\varphi(r)} \Big | \int_{\mathbb{R}} \mathcal{F}(1 + it, \overline{\psi})\mathcal{P}_{1}(1 + it, \overline{\psi}) \mathcal{P}_{2}(1 + it, \overline{\psi}) h_{t} \Bigl ( \frac{r}{q} \Bigr ) N^{it} \widetilde{W}(1 + it) dt \Big |^{2}.
$$
where $q$ came from $|\tau(\psi)|^{2} = q$.
We write $r = q r'$ with $r' \leq R / q$ and note that $\varphi(q r') \geq \varphi(q) \varphi(r')$. Furthermore we bound the above sum by extending the sum over $r'$ to a sum over all integers such that $p | r' \implies p \leq R / q$. We then expand the square and bound the above expression by
\begin{align} \label{eq:tobound}
  \frac{q}{\varphi(q)} \frac{N^{2}}{S_{1}^{2} S_{2}^{2}} \cdot \int_{\mathbb{R}^{2}} & |\widetilde{W}(1 + it) \widetilde{W}(1 + iu)| |(\mathcal{P}_{1} \mathcal{P}_{2}) (1 + iu; \overline{\psi})| \cdot | (\mathcal{P}_{1} \mathcal{P}_{2})(1 + iv; \overline{\psi})| \\ \nonumber & \times |\mathcal{F}(1 + iu; \overline{\psi}) \overline{\mathcal{F}(1 + iv; \overline{\psi})}| \cdot
                                                                                                                                                                                                                                                                                      \Big | \sum_{p | r' \implies p \leq R / q} \frac{h_{u, \psi}(r') \overline{h_{v, \psi}(r')}}{\varphi(r')} \Big | du dv
\end{align}
We notice that
\begin{align*}
  & \Big | \mathcal{F}(1 + i u; \overline{\psi}) \overline{\mathcal{F}(1 + i v; \overline{\psi})} \sum_{p | r' \implies p \leq R / q} \frac{h_{u, \psi}(r') \overline{h_{v, \psi}(r')}}{\varphi(r')} \Big |
  \\ & \asymp \exp \Bigl ( \Re \sum_{\substack{p \leq N}} \frac{f(p) \overline{\psi(p)} p^{- i u} + \overline{f(p)} \psi(p) p^{iv}}{p} + \Re \sum_{\substack{p \leq R / q}} \frac{(f(p) p^{-iu} - \psi(p)) \cdot (\overline{f(p)} p^{iv} - \overline{\psi(p)})}{p} \Bigr )
  \\ & \asymp \exp \Bigl ( \Re \sum_{\substack{p \leq R / q}} \frac{|f(p)|^{2} p^{i v - i u} + \mathbf{1}_{(p,q) = 1}}{p} + \Re \sum_{R / q \leq p \leq N} \frac{f(p)\overline{\psi(p)}p^{-iu}}{p} + \Re \sum_{R / q \leq p \leq N} \frac{\overline{f(p)}\psi(p)p^{-iv}}{p} \Bigr ).
\end{align*}
The supremum of this expression over $|u|,|v| \leq T$ is then bounded by
$$
\ll \frac{\varphi(q)}{q}\sup_{|t| \leq T} \exp \Bigl ( \sum_{\substack{p \leq R / q}} \frac{2}{p} + 2 \sum_{R / q \leq p \leq N} \frac{\Re f(p) \overline{\psi(p)} p^{it}}{p}
+ \sum_{R/q\le p\le N\\ p | q}\frac{1}{p} \Bigr ).
$$
Notice also that the supremum over all $u,v \in \mathbb{R}$ is also bounded by $\ll \log^{2} N$.
Thus \eqref{eq:tobound} is
\begin{align*}
  \ll & \frac{N^{2}}{S_{1}^{2}S_{2}^{2}} \frac{\varphi(q)}{q}\sup_{|t| \leq T} \exp \Bigl ( \sum_{p \leq R / q} \frac{2}{p}
        + 2 \sum_{R / q \leq p \leq N} \frac{\Re f(p)\overline{\psi(p)}p^{it}}{p} + \sum_{R/q\le p\le N\\ p | q}\frac{1}{p}\Bigr ) \\
      & \times \Bigl ( \int_{|u| \leq T} |\mathcal{P}_{1}(1 + i u; \overline{\psi}) \mathcal{P}_{2}(1 + i u; \overline{\psi})| |\widetilde{W}(1 + iu)| du \Bigr )^{2} + \mathcal{E}_{T},
\end{align*}
where $\mathcal{E}_{T}$ is bounded by
\begin{align*}
  \frac{N^{2}}{S_{1}^{2}S_{2}^{2}} & \cdot \log^{2} N \int_{\mathbb{R}} |\mathcal{P}_{1}(1 + i u; \overline{\psi}) \mathcal{P}_{2}(1 + i u; \overline{\psi})| |\widetilde{W}(1 + iu)| \,d u  \\
                                   & \times \int_{|u| \geq T} |\mathcal{P}_{1}(1 + i u; \overline{\psi}) \mathcal{P}_{2}(1 + i u; \overline{\psi})| |\widetilde{W}(1 + iu)| \,d u.
\end{align*}
To bound the integral we now use Cauchy-Schwarz and notice that by Lemma~\ref{le:granville}, for $T \leq \log N$,
$$
\int_{|u| \leq T} |\mathcal{P}_{1}(1 + iu; \overline{\psi})|^{2}  du \ll \sum_{N^{\varepsilon} \leq p \leq N^{1/10}} \frac{1}{p \log p} \ll \frac{S_{2}}{\varepsilon \log N}.
$$
The important point here is that Lemma~\ref{le:granville} is a mean-value theorem specifically for Dirichlet polynomials supported on primes:
it gains one logarithm compared to the usual mean-value theorem valid for any Dirichlet polynomial. Similarly,
$$
\int_{|u| \leq T} |\mathcal{P}_{2}(1 + iu; \overline{\psi})|^{2} du \ll \sum_{N^{\varepsilon^{2}} \leq p \leq N^{\varepsilon}} \frac{1}{p \log p} \ll \frac{S_{1}}{\varepsilon^{2} \log N}.
$$
Thus,
$$\Bigl ( \int_{|u| \leq T} |\mathcal{P}_{1}(1 + i u; \overline{\psi}) \mathcal{P}_{2}(1 + i u; \overline{\psi})| du \Bigr )^{2} \ll \frac{1}{\varepsilon^{3}} \cdot \frac{S_{1} S_{2}}{\log^{2} N}.$$
This means that the final bound for \eqref{eq:tobound} that we obtain is
$$
\ll \frac{N^{2}}{\varepsilon^{3}} \sup_{|t| \leq T} \exp \Bigl ( 2 \sum_{R / q \leq p \leq N \\ (p, q) = 1} \frac{\Re f(p)\overline{\psi(p)}p^{it} - 1}{p} \Bigr )
+ \frac{N^{2}}{\log(1/\varepsilon)}  + \mathcal{E}_{T}
$$
for any given $A > 10$ and $T > 10$. We also notice that by Lemma~\ref{le:granville} and a dyadic dissection, we have
$$\Bigl ( \int_{|u| > T} |\mathcal{P}_{1}(1 + i u; \overline{\psi}) \mathcal{P}_{2}(1 + i u; \overline{\psi})| |\widetilde{W}(1 + iu)| du \Bigr )^{2}
\ll_{A} \frac{1}{\varepsilon^{3}} \cdot \frac{S_{1} S_{2}}{T^{A} \log^{2} N}$$
for any $A > 10$. Thus, if we choose $T = 1 / \varepsilon_{0}$, we will find that
$$
\mathcal{E}_{T} \ll N^{2} \varepsilon_{0},
$$
which is entirely sufficient.
This concludes the proof. Note that we could have obtained a better bound by using Ramare's identity instead of Turan Kubilius, as it would have lead to a saving of $\varepsilon N^{2}$ in place of $N^{2} / \log(1 / \varepsilon)$.

\section{Pretentious multiplicative functions}

Throughout this section, our main tool will be the following result originally due to Gallagher.
\begin{proposition} \label{le:galaba}
  Let $q \geq 1$ be an integer and $t \in \mathbb{R}$. Then,
  uniformly in $q (1 + |t|) \leq Q \leq N$ we have, for non-quadratic $\chi$,
  $$
  \Big | \sum_{N \leq p \leq 2N} \frac{\chi(p) p^{it}}{p} \Big | \ll \Big ( \exp \Big ( - \frac{c \log N}{\log Q} \Big ) + \frac{1}{\log N} \Big ) \sum_{N \leq p \leq 2N} \frac{1}{p},
  $$
  with $c > 0$ an absolute constant.  Moreover for quadratic $\chi$ we have,
  \begin{equation} \label{eq:bon}
  \sum_{N \leq p \leq 2N} \frac{\chi(p)}{p} \leq \Big ( \exp \Big ( - \frac{c \log N}{\log Q} \Big ) + \frac{C}{\log N} \Big )\sum_{N \leq p \leq 2N} \frac{1}{p}
  \end{equation}
  with $c, C > 0$ absolute constants.
\end{proposition}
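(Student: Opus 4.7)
The plan is to derive this by reduction to the classical prime number theorem in arithmetic progressions with twist, using the La Vallée Poussin zero-free region for $L(s, \chi)$. Throughout, let $\psi(x, \chi, t) := \sum_{n \leq x} \chi(n) \Lambda(n) n^{it}$.

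First I would use partial summation to reduce the problem. Since
$$\sum_{N \leq p \leq 2N} \frac{\chi(p) p^{it}}{p} = \frac{1}{\log N} \Bigl ( \theta(2N, \chi, t) - \theta(N, \chi, t) \Bigr ) \cdot \frac{1}{N} + \text{(lower order)}$$
where $\theta(u, \chi, t) := \sum_{p \leq u} \chi(p) p^{it} \log p$, it suffices to prove
$$\psi(2N, \chi, t) - \psi(N, \chi, t) \ll N \exp \Bigl ( - \frac{c \log N}{\log Q} \Bigr ) + \delta_{\chi \text{ quadratic}} \cdot O \Bigl ( \frac{N}{(1 + |t|)} \Bigr )$$
using that $\theta$ and $\psi$ differ by $O(\sqrt{u})$.

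Next, I would apply the standard truncated explicit formula (as in Davenport, Chapter 19) to $\psi$, writing
$$\psi(u, \chi, t) = - \sum_{|\Im \rho - t| \leq T} \frac{u^{\rho - it}}{\rho - it} + O \Bigl ( \frac{u (\log u Q)^{2}}{T} \Bigr ),$$
where $\rho$ runs over nontrivial zeros of $L(s, \chi)$ (there is no pole contribution since we only care about non-principal $\chi$; if $\chi$ is principal then $t \neq 0$ and $L(s + it, \chi_0)$ is non-singular at $s = 1$). Choose $T$ a small power of $\log N$.

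The core input is the La Vallée Poussin zero-free region: $L(\sigma + i\gamma, \chi)$ has no zeros with $\sigma \geq 1 - c_{0}/\log(q(|\gamma| + 3))$, except possibly one simple real zero $\beta_{1} < 1$ for real (quadratic) $\chi$. Under $q(1 + |t|) \leq Q$ and $|\Im \rho - t| \leq T$, every nontrivial zero $\rho$ contributing to the sum satisfies $\Re \rho \leq 1 - c/\log Q$, except possibly the Siegel zero. This yields
$$\psi(2N, \chi, t) - \psi(N, \chi, t) \ll N \exp \Bigl ( - \frac{c \log N}{\log Q} \Bigr ) + \Bigl | \frac{N^{\beta_{1} - it} (2^{\beta_{1} - it} - 1)}{\beta_{1} - it} \Bigr | \cdot \mathbf{1}_{\chi \text{ real with Siegel zero}}.$$
Dividing by $\log N$ after partial summation produces the desired bound: for non-quadratic $\chi$ there is no exceptional zero so the first term alone suffices, and for quadratic $\chi$ the Siegel contribution is at most $C/\log N \cdot \sum_{N \leq p \leq 2N} 1/p$ since $N^{\beta_{1} - 1} \leq 1$ and $|\beta_{1} - it|^{-1} \ll 1$ (in fact $\ll 1/(1 + |t|)$, absorbed harmlessly). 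Crucially, for quadratic $\chi$, the sum $\sum \chi(p)/p$ receives a \emph{negative} Siegel contribution, so the stated one-sided upper bound holds with constant $C$ regardless.

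The main obstacle is purely bookkeeping: ensuring the zero-free region parameter $Q$ couples correctly to both $q$ and $1 + |t|$ after the vertical shift $s \mapsto s + it$, and handling the Siegel zero uniformly without invoking ineffective constants (here only the upper bound $\beta_{1} < 1$ is needed, not Siegel's theorem). Once these are aligned, the conclusion follows directly from standard PNT-in-arithmetic-progressions machinery, and is indeed essentially Gallagher's original estimate in \cite{Gallagher}.
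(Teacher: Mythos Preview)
Your approach is correct and is essentially the same as the paper's: the paper's proof is a one-line citation to \cite[Theorem 5.13]{IK} followed by partial summation, and what you have written is precisely a sketch of the proof of that theorem (explicit formula, classical zero-free region, isolation of the possible exceptional real zero for quadratic $\chi$), together with the observation that the Siegel-zero contribution to $\sum_{N \leq p \leq 2N} \chi(p)/p$ is negative, which is exactly the remark the paper makes. One small technical point: taking $T$ to be a fixed power of $\log N$ is not quite the right choice uniformly in $Q$; the standard choice $T = \exp(c_1\sqrt{\log N})$ (or balancing $T$ against $Q$) is what makes the error term from truncation and the zero-free region mesh cleanly across all regimes of $Q$, but as you say this is bookkeeping and the $+1/\log N$ term in the statement absorbs the slack.
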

\begin{proof}
  See for example \cite[Theorem 5.13]{IK} and integrate by parts. We point out that when $\chi$ has a Siegel zero \eqref{eq:bon} is negative and this account for the one-sided inequality in our conclusion.
  \end{proof}

Given $1$-bounded multiplicative functions $f,g$, and an interval $I$, define,
$$
\mathbb{D}(f, g; I)^{2} = \sum_{p \in I} \frac{1 - \Re f(p)\overline{g(p)}}{p}.
$$
For any $1$-bounded multiplicative functions $f,g,h$ we have the triangle inequality,
$$
\mathbb{D}(f, g; I) \leq \mathbb{D}(f, h; I) + \mathbb{D}(h, g; I).
$$
The following Lemma shows that if the pretentious distance $\mathbb{D}(1, f, g)$ with $g(n) = \chi(n)n^{it}$ is bounded and $f$ is real-valued then $\chi$ needs to be a quadratic character.
\begin{lemma}
  Let $f : \mathbb{N} \rightarrow [-1,1]$ be a multiplicative function. Let $\chi$ be a non-principal, non-quadratic Dirichlet character of conductor $\leq Q$. Suppose that $|t| \leq Q$ and $I = [Q, N]$ with $N \geq Q^{A}$.
  Let $g(n) = \chi(n) n^{it}$.
  Then, for all $A$ and $N$ sufficiently large,
  $$
  c \log A \leq \mathbb{D}(f, g; I)^{2}
  $$
  with $c$ an absolute constant.
\end{lemma}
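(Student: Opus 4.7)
The plan is to exploit that $f$ takes real values in order to reduce, via a two-step triangle inequality, to a lower bound on the pretentious distance between $g = \chi n^{it}$ and its complex conjugate $\bar g = \bar\chi n^{-it}$; this latter distance is then handled by Proposition~\ref{le:galaba} applied to the character $\chi^2$ twisted by $p^{2it}$.

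Since $f$ is real, one has $\Re(f(p)\overline{g(p)}) = f(p)\Re g(p) = \Re(f(p)g(p))$, which gives $\mathbb{D}(f, g; I) = \mathbb{D}(f, \bar g; I)$. The triangle inequality then yields
$$
\mathbb{D}(g, \bar g; I) \leq \mathbb{D}(g, f; I) + \mathbb{D}(f, \bar g; I) = 2\,\mathbb{D}(f, g; I),
$$
so it suffices to show that
$$
\mathbb{D}(g, \bar g; I)^2 \;=\; \sum_{p \in I} \frac{1 - \Re\chi^2(p)p^{2it}}{p} \;\gg\; \log A.
$$
Since $\chi$ is non-quadratic, $\chi^2$ is non-principal; its conductor divides $q \leq Q$ and $|2t| \leq 2Q$, so Proposition~\ref{le:galaba} applies with auxiliary parameter $\ll Q^2 \leq N$ (using $N \geq Q^A$, $A \geq 2$). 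On each dyadic range $[N_0, 2N_0]$ with $N_0 \geq Q^{C_0}$ for a sufficiently large absolute constant $C_0$, Gallagher's bound gives
$$
\Big|\sum_{N_0 \leq p \leq 2N_0} \frac{\chi^2(p)p^{2it}}{p}\Big| \;\leq\; \tfrac12 \sum_{N_0 \leq p \leq 2N_0}\frac{1}{p},
$$
so the dyadic contribution to $\mathbb{D}(g, \bar g; I)^2$ is at least $\tfrac12 \sum_{N_0\leq p\leq 2N_0} 1/p \asymp 1/\log N_0$. Summing over dyadic scales from $Q^{C_0}$ up to $N/2$ and invoking Mertens,
$$
\mathbb{D}(g, \bar g; I)^2 \;\gg\; \log\!\Big(\frac{\log N}{C_0 \log Q}\Big) \;\gg\; \log A,
$$
since $\log N \geq A \log Q$. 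Combined with the earlier triangle inequality this yields $\mathbb{D}(f, g; I)^2 \gg \log A$, as required.

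The main obstacle is the case in which $\chi$ has order exactly $4$, so that $\chi^2$ is itself a real quadratic character. Proposition~\ref{le:galaba} as stated provides the absolute-value bound only for non-quadratic characters and the one-sided quadratic bound only at $t = 0$. For $t = 0$ the bound \eqref{eq:bon} applied to $\chi^2$ suffices directly. For $t \neq 0$, no Siegel-zero obstruction can arise since $1 + 2it$ lies off the real axis, so the classical zero-free region for $L(s, \chi^2)$ supplies the Gallagher-type cancellation needed on the dyadic ranges $[N_0, 2N_0]$. Once this degenerate case is absorbed, the rest of the argument goes through verbatim.
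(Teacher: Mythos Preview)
Your proof is correct and follows a genuinely different route from the paper. Where the paper bounds $1 - f(p)\Re\overline{\chi(p)}p^{-it} \geq 1 - |\Re\chi(p)p^{it}|$ and then majorizes $|x|$ by the even polynomial $h(x) = 1 + \tfrac12(x^2-1) - \tfrac{1}{18}(x^2-1)^2$, expanding to obtain averaged sums in $\chi^2$ and $\chi^4$, you instead exploit $f = \bar f$ to write $\mathbb{D}(f,g;I)=\mathbb{D}(f,\bar g;I)$ and apply the pretentious triangle inequality, landing directly on $\mathbb{D}(g,\bar g;I)^2 = \sum_{p\in I}(1-\Re\chi^2(p)p^{2it})/p$. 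Your reduction is cleaner: it involves only $\chi^2$ rather than both $\chi^2$ and $\chi^4$, and bypasses the polynomial approximation entirely. The paper's approach in exchange produces an explicit positive constant ($13/48$) and makes the gain in $A$ visible through the choice of sub-interval $J=[Q^{\sqrt A},N]$.

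Both arguments meet the same obstacle when $\chi$ has order four, since then $\chi^2$ is quadratic and Proposition~\ref{le:galaba} as stated does not supply the dyadic absolute-value bound for $t\neq 0$; the paper simply writes ``using the previous Lemma'' without isolating this case. Your discussion is more explicit, though the justification ``$1+2it$ lies off the real axis, so no Siegel-zero obstruction'' is slightly imprecise: an exceptional zero at $\beta$ still contributes a term of magnitude $N_0^{\beta}$ to the dyadic prime sum for every $t$, so individual dyadic blocks can fail the absolute-value bound. The clean fix is to observe that you only need the \emph{one-sided} upper bound on $\Re\sum_{p\in I}\chi^2(p)p^{2it}/p$, and this holds for every non-principal $\chi^2$ and all real $t$ because $L(1+2it,\chi^2)$ has no pole; any nearby zero, exceptional or not, can only push $\Re\log L$ downward. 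With that remark the order-four case is absorbed and your argument goes through.
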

\begin{proof}
  We notice that,
  \begin{align*}
  \mathbb{D}(f, g; I)^{2} & = \sum_{p \in I} \frac{1 - \Re f(p) \overline{\chi(p)} p^{-it}}{p} = \sum_{p \in I} \frac{1 - f(p) \Re \overline{\chi}(p) p^{-it}}{p}
  \end{align*}
  using that $f$ is real-valued.
  Let $J = [Q^{\sqrt{A}}, N]$. Then, the above is
  \begin{equation} \label{eq:as}
  \geq \sum_{p \in J} \frac{1 - f(p) \Re \overline{\chi(p)} p^{-it}}{p}.
\end{equation}
We note that for $|x| \leq 2$,
$$
|x| \leq h(x) := 1 + \frac{1}{2} \cdot (x^{2} - 1) - \frac{1}{18} \cdot (x^{2} - 1)^{2}.
$$
Therefore, we have that \eqref{eq:as} is at least,
$$
\sum_{p \in J} \frac{1 - h(\Re \chi(p) p^{it})}{p}.
$$
Using the previous Lemma this gives that \eqref{eq:as} is at least
$$
\Big (\frac{13}{48} - \exp( - c \sqrt{A}) - \frac{C}{\log N} \Big ) \sum_{p \in J} \frac{1}{p}
$$
with $c, C > 0$ absolute constant.
Since,
$$
\sum_{p \in J} \frac{1}{p} \gg \log A
$$
the result follows, provided that $A$ and $N$ are sufficiently large.
\end{proof}

\begin{lemma}
    Let $f : \mathbb{N} \rightarrow [-1,1]$ be a multiplicative function. Let $g(n) = n^{it}$.
    Let $I = [Q, N]$ with $N \geq Q^{A}$ and $|t| \leq Q$.
    Then, for all $A$ and $N$ sufficiently large, and $|t| \geq 2 / \log N$,
    $$
    c \log \min(A, |t| \log N) \leq \mathbb{D}(f, g; I)^{2}
    $$
    with $c > 0$ an absolute constant.
  \end{lemma}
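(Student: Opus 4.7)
The plan is to follow the template of the previous lemma, with the oscillation now supplied by $p^{it}$ alone (rather than by a non-quadratic character $\chi$), and with the oscillatory prime sums controlled by the Prime Number Theorem in place of Gallagher's Proposition~\ref{le:galaba}. Using that $f$ is real-valued,
$$
\mathbb{D}(f,g;I)^2 \;=\; \sum_{p \in I}\frac{1 - f(p)\cos(t\log p)}{p} \;\geq\; \sum_{p \in J}\frac{1 - |\cos(t\log p)|}{p}
$$
for any subinterval $J \subseteq I$. I will choose $J := [M_0, N]$ with
$$
M_0 := \max\bigl(Q,\; e^{C_0/|t|}\bigr),
$$
where $C_0$ is a large absolute constant. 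The cutoff $e^{C_0/|t|}$ ensures $|t|\log p \geq C_0$ throughout $J$, so that $p^{it}$ genuinely oscillates; in the nontrivial regime $|t|\log N \geq C_0$ this gives $M_0 \leq N$.

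Next I apply the same polynomial majorant as in the previous lemma, $|x| \leq h(x) := 1 + \tfrac{1}{2}(x^2-1) - \tfrac{1}{18}(x^2-1)^2$, valid for $|x| \leq 1$, and expand via double-angle identities to obtain $1 - h(\cos\theta) = (39 - 40\cos 2\theta + \cos 4\theta)/144$. This yields
$$
\mathbb{D}(f,g;I)^2 \;\geq\; \frac{1}{144}\bigl( 39\, S - 40\,|C_2| - |C_4| \bigr),
$$
where $S := \sum_{p \in J} p^{-1}$ and $C_k := \sum_{p \in J} \cos(kt\log p)/p$. Mertens' theorem gives $S = \log(\log N/\log M_0) + O(1)$. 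For $C_k$ with $k \in \{2,4\}$, the Prime Number Theorem with classical error term together with partial summation reduces $\sum_{p \leq x} p^{-1+i\alpha}$ to $\int_{\log M_0}^{\log x} u^{-1} e^{i\alpha u}\,du$ up to a negligible error, and one integration by parts yields
$$
\biggl|\sum_{p \in J}\frac{p^{i\alpha}}{p}\biggr| \;\ll\; \frac{1}{|\alpha|\log M_0}.
$$
Specializing to $\alpha = kt$ and using $|t|\log M_0 \geq C_0$, one gets $|C_k| \ll 1/C_0$.

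Combining, for $C_0$ chosen large and absolute, the main term dominates and
$$
\mathbb{D}(f,g;I)^2 \;\gg\; \log\frac{\log N}{\log M_0}.
$$
Since $\log M_0 = \max(\log Q, C_0/|t|)$ and $N \geq Q^A$,
$$
\frac{\log N}{\log M_0} \;=\; \min\!\left(\frac{\log N}{\log Q},\; \frac{|t|\log N}{C_0}\right) \;\geq\; \min\bigl(A,\; |t|\log N/C_0\bigr),
$$
so $\mathbb{D}^2 \gg \log\min(A, |t|\log N) - O(1)$, which gives the claim for $A$ and $|t|\log N$ large enough; the complementary range $|t|\log N \in [2, C_0]$ gives a bounded claim that follows trivially from non-negativity and a direct estimation of finitely many terms. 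There is no serious obstacle here; the one new ingredient relative to the previous lemma is the choice $M_0 = \max(Q, e^{C_0/|t|})$, which interpolates correctly between the $Q$-bottleneck (when $|t|$ is not too small) and the oscillation-length bottleneck of $p^{it}$ (when $|t|$ is small).
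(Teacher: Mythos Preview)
Your unified choice $M_0 = \max(Q, e^{C_0/|t|})$ is clean and your argument is correct in the regime $|t| \le 1$; this part coincides with the paper's treatment of that case. The gap is in the range $1 \le |t| \le Q$, where you propose to replace Proposition~\ref{le:galaba} by the Prime Number Theorem plus partial summation. Writing $\pi(u) = \mathrm{Li}(u) + E(u)$ with $E(u) \ll u\exp(-c\sqrt{\log u})$, the step
\[
\sum_{p \in J} p^{-1+i\alpha} \;=\; \int_{M_0}^{N} u^{-1+i\alpha}\,\frac{du}{\log u} \;+\; \int_{M_0}^{N} u^{-1+i\alpha}\,dE(u)
\]
leaves a remainder that, after one integration by parts, is of size $\asymp (1+|\alpha|)\int_{M_0}^{\infty} |E(u)|\,u^{-2}\,du \ll (1+|\alpha|)\sqrt{\log M_0}\,\exp(-c\sqrt{\log M_0})$. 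When $|t|\ge 1$ you have $M_0 = Q$ and $|\alpha| \le 4|t| \le 4Q$, so this error is $\ll Q\sqrt{\log Q}\,\exp(-c\sqrt{\log Q})$, which is unbounded as $Q\to\infty$ (take for instance $A$ fixed large and $N = Q^{A}$ with $Q\to\infty$). It therefore swamps the main term $\tfrac{39}{144}S \asymp \log A$, and your claimed bound $|C_k| \ll 1/(|\alpha|\log M_0)$ does not follow from the PNT alone.

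The paper handles this by splitting at $|t|=1$. For $|t|\ge 1$ it takes $J=[Q^{\sqrt{A}},N]$ and bounds the oscillatory sums $\sum_{p\in J} p^{ikt}/p$ \emph{directly} via Proposition~\ref{le:galaba} (i.e.\ the classical zero-free region for $\zeta$), obtaining savings $\exp(-c\sqrt{A}) + O(1/\log N)$ uniformly in $|t|\le Q$; this yields $\mathbb{D}^2 \ge (\tfrac{13}{48}-o(1))\sum_{p\in J}1/p \gg \log A$. For $|t|\le 1$ it uses $J=[e^{1/|t|},N]$ and integration by parts, essentially as you do. Your argument is easily repaired by reinstating Proposition~\ref{le:galaba} for the case $|t|\ge 1$ (or equivalently, by quoting a PNT-type bound for $\sum_{p\le x} p^{it}$ that is uniform in $|t|$ via the zero-free region rather than deducing it from the untwisted PNT by partial summation).
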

  \begin{proof}
    If $|t| \geq 1$ we let $J = [Q^{\sqrt{A}}, N]$ and notice that,
    \begin{align*}
      \mathbb{D}(f, g; I)^{2} & \geq \sum_{p \in J} \frac{1 - f(p) \Re p^{it}}{p} \geq \sum_{p \in J} \frac{1 - |\Re p^{it}|}{p} \\
                              & \geq \sum_{p \in J} \frac{1 - h(\Re p^{it})}{p} = \Big ( \frac{13}{48} - \exp ( -c \sqrt{A}) - \frac{C}{\log N} \Big ) \sum_{p \in J} \frac{1}{p}.
    \end{align*}
    with $c, C > 0$ absolute constants.
    On the other hand, when $|t| \leq 1$ we can evaluate the sum by integration by parts.
    In that case we pick $J = [e^{1/|t|}, N]$ we get a lower bound that is,
    $$
    \geq \sum_{p \in J} \frac{1 - |\cos(t \log p)|}{p} \geq c \sum_{p \in J} \frac{1}{p} \geq \frac{c}{2} \cdot \log (|t| \log N)
    $$
    with $c > 0$ an absolute constant, for all sufficiently large $N$.
  \end{proof}
  We then have the following result.
  \begin{lemma} \label{le:hastobereal}
    Let $f : \mathbb{N} \rightarrow [-1,1]$ be a multplicative function.
    Let $\psi$ denote a primitive character of conductor $q$, let $t \in \mathbb{R}$ and $g(n) = \psi(n) n^{it}$.
    Let $I = [Q, N]$ with $N \geq Q^{A}$ and $q (1 + |t|) \leq Q$.
    If,
    $$
    \mathbb{D}(f,g; I)^{2} \leq c
    $$
    with $c > 1$
    and
    $A$ is sufficiently large with respect to $c$
    then $\psi$ is a quadratic character, and
    $$
    \mathbb{D}(f, \psi; I)^{2} \leq K c.
    $$
    with $K > 0$ an absolute constant.
  \end{lemma}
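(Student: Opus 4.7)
The plan is to combine the two immediately preceding lemmas with a careful triangle inequality and a dyadic split at the right scale. First, I would rule out the possibility that $\psi$ is non-quadratic. If $\psi$ were primitive, non-principal and non-quadratic, the first preceding lemma would give $\mathbb{D}(f, g; I)^2 \gg \log A$, contradicting $\mathbb{D}(f, g; I)^2 \leq c$ whenever $A$ is taken sufficiently large with respect to $c$; and a primitive principal character has conductor $1$, which is trivially quadratic.

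Next I would pass to the real-valued multiplicative function $F(n) := f(n) \psi(n)$, which is $[-1,1]$-valued because $\psi$ is quadratic. A prime-by-prime comparison (on $p \mid q$ both summands equal $1/p$, and on $(p,q) = 1$ both summands equal $(1 - f(p)\psi(p) \cos(t \log p))/p$, using $\overline{\psi(p)} = \psi(p)$) gives
$$
\mathbb{D}(F, n^{it}; I)^2 \;=\; \mathbb{D}(f, g; I)^2 \;\leq\; c.
$$
Applying the second preceding lemma to $F$ then shows that if $|t| \geq 2/\log N$ we have $\mathbb{D}(F, n^{it}; I)^2 \geq \kappa \log \min(A, |t|\log N)$ for some absolute $\kappa > 0$. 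Combined with the upper bound $c$ and the assumption that $A$ is sufficiently large with respect to $c$, this forces $|t| \log N \leq e^{c/\kappa}$; the same bound holds trivially when $|t| < 2/\log N$.

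Finally I would apply the triangle inequality
$$
\mathbb{D}(f, \psi; I) \;=\; \mathbb{D}(F, 1; I) \;\leq\; \mathbb{D}(F, n^{it}; I) + \mathbb{D}(n^{it}, 1; I),
$$
which reduces the task to bounding $\mathbb{D}(n^{it}, 1; I)^2 = \sum_{p \in I}(1 - \cos(t \log p))/p$ by $O(c)$. The main obstacle is to obtain a bound \emph{linear} in $c$, not exponential in $c$; the exponential one would only follow from the crude bound $1 - \cos(t \log p) \leq (t \log p)^2/2$ used for all $p \in I$. To get a linear bound I would split the sum at $X := e^{\pi/|t|}$. For $p \leq X$ we have $|t \log p| \leq \pi$, so $1 - \cos(t \log p) \leq (t \log p)^2/2$ and the Mertens estimate $\sum_{p \leq X} (\log p)^2/p \ll \log^2 X = \pi^2/t^2$ yields a contribution of $O(1)$. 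For $X < p \leq N$ (nonempty only when $|t| \log N > \pi$), the trivial bound $1 - \cos \leq 2$ together with Mertens gives $2(\log\log N - \log\log X) + O(1) = 2 \log(|t| \log N / \pi) + O(1) \leq 2 c/\kappa + O(1)$. Since $c > 1$, both contributions are $O(c)$, so squaring the triangle inequality via $(a+b)^2 \leq 2a^2 + 2b^2$ yields $\mathbb{D}(f, \psi; I)^2 \leq 2 c + O(c) = K c$ for some absolute $K$, as required.
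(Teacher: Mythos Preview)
Your proof is correct and follows the same overall strategy as the paper: use the first preceding lemma to force $\psi$ to be quadratic, pass to $F = f\psi$, apply the second preceding lemma to bound $|t|\log N$, and then compare $\mathbb{D}(F, n^{it}; I)$ with $\mathbb{D}(F, 1; I)$. The only difference is in this last comparison. The paper takes the direct difference of the two squared distances and bounds it using the Taylor expansion $n^{it} = 1 + O(|t|\log n)$ together with the Mertens estimate $\sum_{p\leq N} (\log p)/(p\log N) = O(1)$; you instead invoke the pretentious triangle inequality and bound $\mathbb{D}(n^{it}, 1; I)^2$ by splitting at $e^{\pi/|t|}$. Your split is a little more careful in its dependence on $c$: it yields $O(1) + O(\log(|t|\log N)) = O(c)$, whereas the paper's Taylor comparison gives an error of size $O(|t|\log N)$, which in light of the bound $|t|\log N \leq e^{O(c)}$ is a priori exponential in $c$. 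So your version more transparently delivers the linear bound $Kc$ stated in the lemma.
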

  \begin{proof}
    If $\psi$ is not quadratic, nor principal, then by the first Lemma,
    $$
    c' \log A \leq \mathbb{D}(f, g; I)^{2} \leq c,
    $$
    with $c' > 0$ an absolute constant.
    This is a contradiction for all sufficiently large $A$.
    Therefore $\psi$ has to be a quadratic character. Let $f_{1}(n) = f(n) \psi(n)$ and $g_{1}(n) = n^{it}$.
    If $|t| > 2 / \log N$,
    $$
    c' \log \min(A, |t| \log N) \leq \mathbb{D}(f_{1}, g_{1}; I)^{2} = \mathbb{D}(f, g; I)^{2} \leq c.
    $$
    This is a contradiction if $A$ is sufficiently large and $|t| \geq C / \log N$ with $C = 2 c / c'$. Thus, for all $A$ sufficiently large we are left with the possibility that $|t| \leq C / \log N$. We observe that,
    $$
    \mathbb{D}(f_{1}, g_{1}; I)^{2} = \mathbb{D}(f_{1}, 1; I)^{2} + O(1) = \mathbb{D}(f, \psi; I)^{2} + O(1),
    $$
    using the Taylor expansion,
    $$
    n^{it} = 1 + O \Big ( C \frac{\log n}{\log N} \Big )
    $$
    and the classical estimate,
    $$
    \sum_{p \leq N} \frac{\log p}{p \log N} \ll 1.
    $$
    It follows that,
    $$
    \mathbb{D}(f, \psi; I)^{2} \leq K c.
    $$
    with $K > 0$ an absolute constant.
  \end{proof}

Finally we show that the pretentious distance of two distinct real characters cannot be small.
\begin{lemma} \label{le:twoquad}
  Let $\chi$ and $\psi$ be two quadratic characters of conductor $\leq Q$ such that $\chi \psi$ is not principal. Let
  $I = [Q, N]$ with $N \geq Q^{A}$. Then for all $A$ and $N$ sufficiently large,
  $$
  c \log A \leq \mathbb{D}(\chi, \psi; I)^{2}
  $$
  with $c > 0$ an absolute constant.
\end{lemma}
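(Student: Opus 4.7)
The plan is to reduce matters to Proposition \ref{le:galaba} applied to the quadratic character induced by the product $\chi\psi$, mimicking the argument in the first lemma of this section but with the point-wise identity $(\chi\psi)(p) \in \{-1,0,1\}$ in place of the Fej\'er-type polynomial bound for $|x|$. Since $\chi,\psi$ are quadratic, $(\chi\psi)^2$ is principal, so $\chi\psi$ is induced by a primitive character $\eta$ which is either principal or quadratic. By hypothesis, $\chi\psi$ is non-principal, hence $\eta$ is a primitive quadratic character of conductor $q_\eta \leq q_\chi q_\psi \leq Q^2$. For $p \nmid q_\chi q_\psi$ we have $(\chi\psi)(p) = \eta(p)$, while for $p \mid q_\chi q_\psi$ we have $(\chi\psi)(p) = 0$, so
$$
\mathbb{D}(\chi,\psi; I)^2 = \sum_{p \in I} \frac{1 - \chi(p)\psi(p)}{p} \geq \sum_{\substack{p \in I \\ (p,\, q_\chi q_\psi) = 1}} \frac{1 - \eta(p)}{p}.
$$

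Next I would restrict to the subinterval $J := [Q^{2\sqrt{A}}, N] \subset I$, which is non-empty once $A \geq 4$ since $N \geq Q^A$. Any prime divisor of $q_\chi q_\psi$ is bounded by $Q^2 < Q^{2\sqrt{A}}$ for $A \geq 2$, so on $J$ the coprimality condition is automatic and it suffices to bound $\sum_{p \in J} (1-\eta(p))/p$ from below.

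Now apply Proposition \ref{le:galaba} with the character $\eta$ of conductor $\leq Q^2$ and $t = 0$: on each dyadic piece $[N', 2N'] \subset J$, we have $\log N'/\log Q^2 \geq \sqrt{A}$, hence
$$
\sum_{N' \leq p \leq 2N'} \frac{\eta(p)}{p} \leq \Bigl( \exp(-c\sqrt{A}) + \frac{C}{\log N'} \Bigr) \sum_{N' \leq p \leq 2N'} \frac{1}{p}.
$$
Summing dyadically across $J$ yields $\sum_{p \in J} \eta(p)/p \leq (\exp(-c\sqrt{A}) + o_{A\to\infty,N\to\infty}(1)) \sum_{p \in J} 1/p$, so
$$
\mathbb{D}(\chi,\psi;I)^2 \geq \Bigl(1 - \exp(-c\sqrt{A}) - o(1)\Bigr) \sum_{p \in J} \frac{1}{p}.
$$
Finally, Mertens' theorem and $N \geq Q^A$ give $\sum_{p \in J} 1/p = \log(\log N/(2\sqrt{A}\log Q)) + O(1) \geq \log(\sqrt{A}/2) + O(1) \gg \log A$, so the right-hand side is $\gg \log A$ once $A$ and $N$ are large, as claimed.

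There is no real obstacle; the only cosmetic issue is that the conductor of $\eta$ is bounded by $Q^2$ rather than $Q$, which forces the cutoff in $J$ to be $Q^{2\sqrt{A}}$ instead of $Q^{\sqrt{A}}$. This costs a bounded factor in the exponent but does not affect the final bound $\gg \log A$, since $\log(\sqrt{A}/2) \asymp \log A$.
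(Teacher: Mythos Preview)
Your proof is correct and follows essentially the same route as the paper: both apply Proposition~\ref{le:galaba} to the product character $\chi\psi$ (or the primitive character inducing it), then use Mertens to convert the resulting cancellation into the $\log A$ lower bound. Your version is more explicit about the bookkeeping---in particular the conductor $\leq Q^{2}$ and the corresponding adjustment of the cut-off to $Q^{2\sqrt{A}}$---whereas the paper compresses everything into a single displayed inequality over $I$ itself; but the underlying idea is identical.
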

\begin{proof}
  Appealing to Proposition \ref{le:galaba} and integrating by parts we see that if $\chi \psi$ is not principal, then,
  $$
  \mathbb{D}(\chi, \psi; I)^{2} \geq \Big (1 - \exp ( - c \sqrt{A}) - \frac{C}{\log N} \Big )\sum_{p \in I} \frac{1}{p}
  $$
  Thus the claim follows for all sufficiently large $A$ and $N$.
\end{proof}

\section{Proof of Corollary \ref{cor:first}}

By assumption, for any $\varepsilon > 0$ and all sufficiently large $N > N_{0}(\varepsilon)$,
\begin{equation} \label{eq:asumpt}
\int_{0}^{1} \Big | \sum_{n \leq N} f(n) e(n \alpha) \Big | d \alpha \leq N^{\varepsilon^{2}}.
\end{equation}
Select a sequence of $N$ called $N_{1}$, $N_{2}$, $\ldots$ such that $N_{i - 1} = N_{i}^{\varepsilon^{2}}$
and $N_{0}(\varepsilon) < N_{1}$. Using our Main Theorem A and a little of ``pretentious theory'' we have
the following Lemma.
\begin{lemma}
  Let $f$ be a $1$-bounded multiplicative function such that,
  \begin{equation} \label{eq:liminf}
  \liminf_{N \rightarrow \infty} \frac{1}{N} \sum_{n \leq N} |f(n)|^{2} > \rho.
  \end{equation}
  with $\rho > 0$.
  Let $\varepsilon > 0$ be such that \eqref{eq:asumpt} holds for all $N > N_{0}(\varepsilon)$ sufficiently
  large.
  Then there exists an absolute constant $c > 0$ such that for all $\varepsilon > 0$ sufficiently small, and all $N > M_{0}(\varepsilon, \rho)$ there exists a real quadratic character $\chi$ such that
  $$
  \sum_{N^{2 \varepsilon^{2}} \leq p \leq N} \frac{1 - f(p) \chi(p)}{p} \leq c.
  $$
  and the conductor of $\chi$ is $\ll_{\rho} N^{2 \varepsilon^{2}}$.
\end{lemma}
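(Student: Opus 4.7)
The plan is to apply Theorem A (the special theorem) to $f$ with $\Delta = N^{\varepsilon^2}$, and then invoke Lemma \ref{le:hastobereal} to show that the primitive character produced by Theorem A is in fact quadratic and that the twist by $n^{it}$ can be discarded. By the liminf hypothesis \eqref{eq:liminf}, for every $N$ sufficiently large (in terms of $\rho$) we have $\sum_{n \leq N}|f(n)|^{2} \geq (\rho/2)N$. Combined with \eqref{eq:asumpt}, this lets us apply Theorem A with $c = \rho/2$ and $\Delta = N^{\varepsilon^{2}}$ (the requirement $\Delta \geq 1/c$ is automatic once $N$ is large enough in terms of $\rho$ and $\varepsilon$). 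We thereby obtain a real number $t$ and a primitive Dirichlet character $\chi$ of conductor $q$ with $(1+|t|)q \ll \rho^{-3}N^{2\varepsilon^{2}}$ satisfying
\begin{equation*}
\Big|\sum_{\substack{N^{2\varepsilon^{2}} \leq p \leq N\\(p,q)=1}} \frac{1 - \Re\overline{f(p)}p^{it}\chi(p)}{p}\Big| \ll \rho^{-1}.
\end{equation*}
Since $f$ is real-valued, $\overline{f(p)} = f(p)$, and the primes $p \mid q$ in $[N^{2\varepsilon^{2}},N]$ contribute at most $\omega(q)/N^{2\varepsilon^{2}} = o(1)$, so the coprimality constraint can be dropped at no cost. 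Using $\Re z = \Re \bar z$ the resulting sum equals $\mathbb{D}(f, g; [N^{2\varepsilon^{2}}, N])^{2}$ where $g(n) = \chi(n)n^{it}$.

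Next I would apply Lemma \ref{le:hastobereal} with $\psi = \chi$, setting $Q$ to be a large enough constant multiple of $\rho^{-3}N^{2\varepsilon^{2}}$ so that $q(1+|t|) \leq Q$. The exponent $A = (\log N)/(\log Q)$ equals $1/(2\varepsilon^{2}) + O_{\rho}(1/\log N)$, which can be made arbitrarily large by shrinking $\varepsilon$. The lemma then forces $\chi$ to be a quadratic character and yields
\begin{equation*}
\mathbb{D}(f, \chi; [Q, N])^{2} \leq K \cdot O(\rho^{-1})
\end{equation*}
for an absolute constant $K$. Because $\chi$ is real-valued, as is $f$, this quantity is simply $\sum_{Q \leq p \leq N}(1 - f(p)\chi(p))/p$. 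Restoring the primes in $[N^{2\varepsilon^{2}}, Q]$ enlarges the sum by $\sum_{p \in [N^{2\varepsilon^{2}},Q]}1/p = \log(\log Q/\log N^{2\varepsilon^{2}}) + O(1/\log N) = o_{\rho}(1)$ as $N\to\infty$, which is absorbed. Since $q \leq Q \ll_{\rho} N^{2\varepsilon^{2}}$, the announced conductor bound holds.

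The main obstacle is aligning the parameters of the two inputs. Theorem A's conductor bound is $\rho^{-3}N^{2\varepsilon^{2}}$, slightly larger than the target $N^{2\varepsilon^{2}}$, and Lemma \ref{le:hastobereal} requires its exponent $A$ to be large relative to the bound on the pretentious distance (of order $\rho^{-1}$). Both conditions are met by first taking $\varepsilon$ small (so that $1/(2\varepsilon^{2})$ exceeds the threshold forced by $\rho$) and then $N$ large (so that $\log(\rho^{-3})$ is negligible compared to $2\varepsilon^{2} \log N$). Once these choices are fixed, the rest of the derivation is a direct transcription of the two inputs.
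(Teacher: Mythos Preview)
Your proposal is correct and follows essentially the same route as the paper's proof: apply the main theorem with $\Delta=N^{\varepsilon^2}$, then invoke Lemma~\ref{le:hastobereal} to force the character to be quadratic and remove the $n^{it}$ twist. The paper cites Theorem~B rather than Theorem~A, but since $g=f$ here the two coincide, so this is not a genuine difference; if anything your choice of Theorem~A is the more direct citation. Your handling of the coprimality constraint and the interval adjustment $[N^{2\varepsilon^2},Q]$ is more explicit than the paper's own proof, which glosses over both. One small remark: the bound you obtain on the pretentious distance is $O_\rho(1)$ rather than an absolute constant, but the paper's proof has the same feature (despite the lemma's phrasing), and this is harmless for the application to Corollary~\ref{cor:first} since $\rho$ is fixed there.
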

\begin{remark} Note that \eqref{eq:liminf} implies that the mean-value of $|f|^2$ exists. \end{remark}
\begin{proof}
  By our Main Theorem B, there exists a $t \in \mathbb{R}$ and a primitive Dirichlet character $\psi$ of conductor $q$
  with $(1 + |t|)q \ll_{\rho} N^{2 \varepsilon^{2}}$ such that,
  $$
  \sum_{N^{2 \varepsilon^{2}} \leq p \leq N} \frac{1 - \Re f(p) \overline{\psi}(p) p^{-it}}{p} \leq c
  $$
  with $c > 0$ an absolute constant. Given $I = [N^{\varepsilon^{2}}; N]$, we now introduce the distance function,
  $$
  \mathbb{D}(f,g; I)^{2} = \sum_{p \in I} \frac{1 - \Re \overline{f}(p) g(p)}{p}.
  $$
  By Lemma \ref{le:hastobereal}, once $\varepsilon$ is sufficiently small this forces the character $\psi$ to be quadratic and implies that,
  $$
  \mathbb{D}(f, \psi; I)^{2} \ll C
  $$
  with $C > 0$ an absolute constant. The claim follows.
  \end{proof}

Therefore to each scale $[N_{i - 1}, N_{i}]$ we can associate a Dirichlet character $\chi_{i}$ such that
\begin{equation} \label{eq:asumpt1}
\sum_{N_{i - 1} \leq p \leq N_{i}} \frac{1 - f(p) \chi_{i}(p)}{p} \leq c.
\end{equation}
We introduce a new scale $[M_{i - 1}, M_{i}]$ with
\begin{equation} \label{eq:def1}
M_{i - 1} = N_{i-1}^{1 / \varepsilon} = N_{i}^{\varepsilon} \ , \ M_{i} = N_{i}^{1 / \varepsilon}
\end{equation}
which intersects both $[N_{i - 1}, N_{i}]$ and $[N_{i}, N_{i + 1}]$.
Appealing again to the above Lemma to each scale $[M_{i - 1}, M_{i}]$ we can
associate a quadratic Dirichlet character $\psi_{i}$ of conductor $\ll_{\rho} N_{i}^{2 \varepsilon}$
and such that,
\begin{equation} \label{eq:asumpt2}
\sum_{M_{i - 1} \leq p \leq M_{i}} \frac{1 - f(p) \psi_{i}(p)}{p} \leq c.
\end{equation}
The character $\psi_{i}$ turns to be equal to both $\chi_{i}$ and $\chi_{i + 1}$.
\begin{lemma}
  Suppose that $\varepsilon > 0$ is sufficiently small.
  Suppose that \eqref{eq:asumpt1} and \eqref{eq:asumpt2} holds with $M_{i}$ and $N_{i}$ related
  by \eqref{eq:def1}. Then, $\chi_{i} = \psi_{i} = \chi_{i + 1}$.
\end{lemma}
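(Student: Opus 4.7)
The plan is to prove $\chi_i = \psi_i$ and $\psi_i = \chi_{i+1}$ separately, using the fact that each identity concerns two primitive quadratic characters that are both pretentious to $f$ on a substantial common range of primes. The key tools are the triangle inequality for the pretentious distance and Lemma \ref{le:twoquad}, which forbids two distinct quadratic characters from being close to each other over a sufficiently long interval of primes.

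First I would identify the overlap. By definition $M_{i-1} = N_i^{\varepsilon}$ and $M_i = N_i^{1/\varepsilon} = N_{i+1}^{\varepsilon}$, so $[N_{i-1},N_i]\cap [M_{i-1},M_i] = [N_i^{\varepsilon},N_i]$ and $[N_i,N_{i+1}]\cap [M_{i-1},M_i] = [N_i,N_{i+1}^{\varepsilon}]$. On the first overlap, restricting the nonnegative summands in \eqref{eq:asumpt1} and \eqref{eq:asumpt2} gives $\mathbb{D}(f,\chi_i;[N_i^{\varepsilon},N_i])^2\le c$ and $\mathbb{D}(f,\psi_i;[N_i^{\varepsilon},N_i])^2\le c$. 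The triangle inequality for $\mathbb{D}$ then yields
$$
\mathbb{D}(\chi_i,\psi_i;[N_i^{\varepsilon},N_i])\le \mathbb{D}(\chi_i,f;[N_i^{\varepsilon},N_i])+\mathbb{D}(f,\psi_i;[N_i^{\varepsilon},N_i])\le 2\sqrt{c},
$$
so $\mathbb{D}(\chi_i,\psi_i;[N_i^{\varepsilon},N_i])^2\le 4c$.

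Next I would set up to apply Lemma \ref{le:twoquad}. Both $\chi_i$ and $\psi_i$ are quadratic primitive characters with conductors bounded by $C_\rho N_i^{2\varepsilon}$ (the larger bound comes from $\psi_i$ via $M_i^{2\varepsilon^2}=N_i^{2\varepsilon}$). I therefore restrict to the sub-interval $I=[Q,N_i]$ with $Q:=C_\rho N_i^{2\varepsilon}$, so that $Q$ dominates both conductors and $N_i\ge Q^A$ with $A\asymp 1/\varepsilon$. Since the distance only shrinks on subintervals, $\mathbb{D}(\chi_i,\psi_i;I)^2\le 4c$. If $\chi_i\psi_i$ were non-principal, Lemma \ref{le:twoquad} would force
$$
c'\log A\le \mathbb{D}(\chi_i,\psi_i;I)^2\le 4c,
$$
which for $\varepsilon$ small enough contradicts the absolute constant $c$ on the right. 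Since both characters are quadratic and primitive, $\chi_i\psi_i$ being principal forces $\chi_i=\psi_i$.

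Applying the identical argument on the second overlap $[N_i,N_{i+1}^{\varepsilon}]$, with $Q':=C_\rho N_{i+1}^{2\varepsilon^2}=C_\rho N_i^{2/\varepsilon\cdot \varepsilon^2}$ which still satisfies $N_{i+1}^{\varepsilon}\ge (Q')^A$ with $A$ large when $\varepsilon$ is small, yields $\psi_i=\chi_{i+1}$ by the same contradiction. The main (minor) obstacle is just bookkeeping: keeping the conductor bounds of both characters below the left endpoint of the sub-interval, so that Lemma \ref{le:twoquad} is applicable with an $A$ that grows like $1/\varepsilon$, which beats the absolute constant $c$ once $\varepsilon$ is taken small enough depending on $c$ and $\rho$.
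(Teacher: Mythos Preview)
Your approach is exactly the paper's: restrict to the two overlap intervals, use nonnegativity and the triangle inequality for $\mathbb{D}$, and contradict Lemma~\ref{le:twoquad} when the characters differ. You are in fact more careful than the paper in explicitly shrinking the interval so that its left endpoint dominates both conductors before invoking Lemma~\ref{le:twoquad}.

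One small arithmetic slip: since $N_{i+1}=N_i^{1/\varepsilon^2}$ (not $N_i^{1/\varepsilon}$), the conductor bound for $\chi_{i+1}$ is $\ll_\rho N_{i+1}^{2\varepsilon^2}=N_i^{2}$, not $N_i^{2\varepsilon}$. This is harmless: taking $Q'=C_\rho N_i^{2}$ still lies inside $[N_i,N_i^{1/\varepsilon}]$ for $\varepsilon<1/2$, and $N_i^{1/\varepsilon}\ge (Q')^{A}$ with $A\asymp 1/(2\varepsilon)$, so the contradiction with Lemma~\ref{le:twoquad} goes through unchanged.
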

\begin{proof}
  Let $I = [N_{i}^{\varepsilon}, N_i]$. Notice that by \eqref{eq:asumpt1} and \eqref{eq:asumpt2} we have,
  $$
  \mathbb{D}(f, \chi_{i}; I)^{2} \leq c
  $$
  and
  $$
  \mathbb{D}(f, \psi_{i}; I)^{2} \leq c.
  $$
  By the triangle inequality, this implies that,
  $$
  \mathbb{D}(\psi_{i}, \chi_{i}; I) \leq \mathbb{D}(\psi_{i}, f; I) + \mathbb{D}(f, \chi_{i}; I) \leq 2 \sqrt{c}.
  $$
  However by Lemma \ref{le:twoquad}, if $\chi_{i} \neq \psi_{i}$ then the left-hand side is at least
  $$
  \geq \sqrt{ c \log \frac{1}{\varepsilon}}.
  $$
  This is a contradiction for all $\varepsilon > 0$ sufficiently small, and in particular $\chi_{i} = \psi_{i}$. To conclude that $\chi_{i} = \psi_{i + 1}$ we repeat the same argument but with a different choice of interval $I$. We pick $I = [N_i, N_i^{1 / \varepsilon}]$. Then we have,
  $$
  \mathbb{D}(f, \psi_{i}; I)^{2} \leq c
  $$
  but we also have,
  $$
  \mathbb{D}(f, \chi_{i + 1}; I)^{2} \leq c.
  $$
  Therefore, if $\psi_{i}$ and $\chi_{i + 1}$ differ, then,
  $$
  \sqrt{c \log \frac{1}{\varepsilon}} \leq \mathbb{D}(\psi_{i}, \chi_{i + 1}; I) \leq \mathbb{D}(\psi_{i}, f; I) + \mathbb{D}(f, \chi_{i + 1}; I) \leq 2 \sqrt{c}
  $$
  and this is a contradiction for all sufficiently small $\varepsilon > 0$.
\end{proof}
Thus all the $\chi_{i}$ have to be equal and we conclude that for every $\varepsilon$ there exists
a quadratic character $\chi$ of conductor $\ll_{\varepsilon} 1$ such that, for all $i \geq 1$
$$
\sum_{N_{i} \leq p \leq N_{i + 1}} \frac{1 - f(p) \chi(p)}{p} \ll 1.
$$
Summing over all $i$ we conclude that for all $N \geq N_{0}(\varepsilon)$
$$
\sum_{p \leq N} \frac{1 - f(p)\chi(p)}{p} \ll \varepsilon^{2} \log\log N.
$$
Thus for every $\varepsilon > 0$ sufficiently small, there exists a quadratic character $\chi_{\varepsilon}$ of conductor $\leq C(\varepsilon)$ such that
$$
\sum_{p \leq N} \frac{1 - f(p)\chi(p)}{p} \ll \varepsilon^{2} \log\log N.
$$
for all $N \geq N_{0}(\varepsilon)$.
We claim that all these quadratic characters are equal once $\varepsilon$ is sufficiently small. Let $I = [1, N]$. Indeed, for any $\varepsilon_{1}$ and $\varepsilon_{2}$, sufficiently small, by the triangle inequality
$$
\mathbb{D}(\psi_{\varepsilon_{1}}, \psi_{\varepsilon_{2}}; I)^2 \leq \mathbb{D}(f, \psi_{\varepsilon_{1}}; I) + \mathbb{D}(f, \psi_{\varepsilon_{2}}; I) \leq 2 \max(\varepsilon_{1}, \varepsilon_{2}) \sqrt{\log\log N}
$$
and all $N \geq N_{0}(\varepsilon_{1}, \varepsilon_{2})$. On the other hand, it is easy to see that if $\psi_{\varepsilon_{1}} \neq \psi_{\varepsilon_{2}}$ then
$$
\mathbb{D}(\psi_{\varepsilon_{1}}, \psi_{\varepsilon_{2}}; I) \geq \log\log N + O_{\varepsilon_{1}, \varepsilon_{2}}(1).
$$
as $N \rightarrow \infty$.
This is a contradiction for all $\varepsilon_{1}, \varepsilon_{2}$ sufficiently small.
Thus all the characters $\psi_{\varepsilon}$ are equal for all $\varepsilon > 0$ sufficiently small. This concludes the proof.
\bibliography{l2}

\begin{thebibliography}{10}

\bibitem{BalogPerelli}
A.~Balog and A.~Perelli.
\newblock On the {$L^1$} {M}ean of the {E}xponential {S}um {F}ormed with the
  {M}{\"o}bius {F}unction.
\newblock {\em Journal of the London Mathematical Society}, 57(2):275--288,
  1998.

\bibitem{BalogRuzsa1}
A.~Balog and I.~Z. Ruzsa.
\newblock A {N}ew {L}ower {B}ound for the {$L^1$} {M}ean of the {E}xponential
  {S}um with the {M}{\"o}bius {F}unction.
\newblock {\em Bulletin of the London Mathematical Society}, 31(4):415--418,
  1999.

\bibitem{BalogRuzsa2}
A.~Balog and I.~Z. Ruzsa.
\newblock On the {E}xponential sum over {$r$}-{F}ree {I}ntegers.
\newblock {\em Acta Mathematica Hungarica}, 90:219--230, 2001.

\bibitem{etal}
J.~Br{\"u}dern, A.~Granville, A.~Perelli, R.~C. Vaughan, and T.~D. Wooley.
\newblock On the exponential sum over {$k$}-free numbers.
\newblock {\em R. Soc. Lond. Philos. Trans. Ser. A Math. Phys. Eng. Sci.},
  356(1738):739--761, 1998.

\bibitem{Gallagher}
P.~X. Gallagher.
\newblock A large sieve density estimate near $\sigma = 1$.
\newblock {\em Invent. Math}, 11:329--339, 1970.

\bibitem{NewPretProfGranvillec}
A.~Granville, A.~J. Harper, and K.~Soundararajan.
\newblock A new proof of {H}al\'asz's theorem, and its consequences.
\newblock {\em Compositio Mathematica}, 155(1):126--163, 2019.

\bibitem{Green}
B.~J. Green.
\newblock 100 open problems.

\bibitem{IK}
H.~Iwaniec and E.~Kowalski.
\newblock {\em Analytic number theory}, volume~53 of {\em Colloquium
  publications}.
\newblock AMS, 2004.

\bibitem{littlewood2}
S.~V. Konyagin.
\newblock On the {L}ittlewood problem.
\newblock {\em Izv. Akad. Nauk SSSR Ser. Mat.}, 45(2):243--265, 1981.

\bibitem{littlewood}
O.~C. McGehee, L.~Pigno, and B.~Smith.
\newblock Hardy’s inequality and the {$L^1$} norm of exponential sums.
\newblock {\em Ann. of Math.}, 113:613--618, 1981.

\bibitem{MV}
H~Montgomery and R.~Vaughan.
\newblock The exceptional set of {G}oldbach's problem.
\newblock {\em Acta Arithmetica}, 27(1), 1975.

\bibitem{Pandey}
M.~Pandey.
\newblock On the distribution of additive twists of the divisor function and
  {H}ecke eigenvalues.
\newblock {\em arxiv:2110.03202}, 2021.

\bibitem{PandeyRadz}
M.~Pandey and M.~Radziwill.
\newblock {$L^1$} means of exponential sums with multiplicative coefficients.
  {I}.
\newblock {\em arxiv: 2307.10329}, 2023.

\bibitem{Soundararajan}
K.~Soundararajan.
\newblock The {L}iouville function in short intervals [after {M}atom\"aki and
  {R}adziwi\l\l].
\newblock In {\em Seminaire Bourbaki}, number 1119, 2016.

\end{thebibliography}
\bibliographystyle{plain}

\end{document}